\let\equation=\gather
\let\endequation=\endgather
\numberwithin{equation}{section}
\renewcommand*{\@fnsymbol}[1]{\ensuremath{\ifcase#1\or 1\or 2\or 3\else\@ctrerr\fi}}
\newcounter{exmpcounter}
\newtheorem{thm}{Theorem}[section]
\newtheorem{lem}[thm]{Lemma}
\newtheorem{prop}[thm]{Proposition}
\theoremstyle{definition}
\newtheorem{defn}[thm]{Definition}
\newtheorem{exmp}[exmpcounter]{Example}
\theoremstyle{remark}
\newtheorem{rem}[thm]{Remark}
\newcounter{assum}
\newenvironment{assum}[1][]{\ifx\newenvironment#1\newenvironment\refstepcounter{assum}\fi\equation\tag{\ensuremath{\mathrm{A}\theassum#1}}}{\endequation}
\DeclareMathOperator*{\esssup}{esssup}
\DeclareMathOperator*{\essinf}{essinf}
\newcommand{\R}{{\mathbb{R}}}
\newcommand{\X}{{\R^d}}
\renewcommand{\S}{{S^{d-1}}}
\newcommand{\N}{\mathbb{N}}
\newcommand{\eps}{\varepsilon}
\newcommand{\La}{\Lambda}
\newcommand{\ka}{\varkappa}
\newcommand{\kam}{\varkappa^-}
\newcommand{\x}{\mathcal{X}}
\newcommand{\lt}{l_\theta}
\newcommand{\Tauin}{\mathscr{C}}
\newcommand{\m}{{\mathfrak{m}}}
\newcommand{\n}{{\mathfrak{n}}} 
\newcommand{\1}{1\!\!1}
\newcommand{\inter}{{\mathrm{int}}}
\newcommand{\supp}{\mathrm{supp}\,}
\newcommand{\Cb}{C_b(\X)}
\newcommand{\Buc}{C_{ub} (\X)}
\newcommand{\Y}{\mathcal{U}}
\newcommand{\locun}{\xRightarrow{\,\mathrm{loc}\ }}
\title{The hair-trigger effect for a class of nonlocal nonlinear equations}
\author{Dmitri Finkelshtein\thanks{Department of Mathematics,
Swansea University, Singleton Park, Swansea SA2 8PP, U.K. ({\tt d.l.finkelshtein@swansea.ac.uk}).} \and Pasha Tkachov\thanks{Gran Sasso Science Institute, Viale Francesco Crispi, 7, 67100 L'Aquila AQ, Italy ({\tt pasha.tkachov@gssi.it}).}}
\begin{document}

\maketitle

\begin{abstract}

We prove the hair-trigger effect for a class of nonlocal nonlinear evolution equations on $\X$ which have only two constant stationary solutions, $0$ and $\theta>0$. The effect consists in that the solution with an initial
condition non identical to zero converges (when time goes to $\infty$) to $\theta$ locally uniformly in $\X$. We find also sufficient conditions for existence, uniqueness and comparison principle in the considered equations.   

\textbf{Keywords:} hair-trigger effect, nonlocal diffusion, reaction-diffusion equation, front propagation, monostable equation, nonlocal nonlinearity, long-time behavior, integral equation

\textbf{2010 Mathematics Subject Classification:} 35B40, 35K57, 47G20, 45G10  
\end{abstract}

\section{Introduction}
We will deal with the following nonlinear nonlocal evolution equation on the Euclidean space $\X$, $d\geq1$:
\begin{equation}\label{eq:basic}
	\dfrac{\partial u}{\partial t}(x,t) = \ka (a*u)(x,t) -mu(x,t) -u(x,t)(Gu)(x,t)
\end{equation}
for $t>0$, $x\in\X$, with an initial condition $u(x,0)=u_0(x)$, $x\in\X$. Here 
$m, \ka>0$; $a$ is a nonnegative probability kernel on~$\X$, i.e. $0\leq a\in L^1(\X)$ and
\begin{equation}\label{eq:normalized}
\int_\X a(x)\,dx=1;
\end{equation}
$(a*u)(x,t)$ means the convolution (in~$x$)
between  $a$ and $u$, namely,
\begin{equation}\label{eq:defconv}
	(a*u)(x,t)=\int_\X a(x-y)u(y,t)dy;
\end{equation}
and $G$ is a mapping on a space of bounded on $\X$ functions.

We interpret $u(x,t)$ as a density of a population at the point $x\in\X$ at the moment of time $t\geq 0$.
The probability kernel $a=a(x)$ describes distribution of the birth of new individuals with constant intensity $\ka>0$.
Individuals in the population may also die either with the constant mortality rate $m>0$ or because of the competition, described by the density dependent rate $Gu$, where $G$ is an (in general, also nonlinear) operator on a space of bounded functions (cf. the discussion in \cite{RHD2011}). 

The equation \eqref{eq:basic} can be also rewritten in a reaction-diffusion form 
\begin{gather}
		\dfrac{\partial u}{\partial t}(x,t) = \ka (a*u)(x,t) - \ka u(x,t) + (Fu)(x,t),\label{eq:RDform}\\
\shortintertext{where}
		Fu := u(\ka-m-Gu) \label{eq:reaction}\\
\intertext{plays the role of the so-called reaction term, whereas}
Lu:=\ka (a*u)-\ka u \label{eq:nonlocdif}
\end{gather}
describes the non-local diffusion generator, see e.g. \cite{AMRT2010} (note that $L$ is also known as the generator of a continuous time random walk in $\X$ or of a compound Poisson process on $\X$). As a result, the solution $u$ to the equation \eqref{eq:RDform} may be interpreted as a density of a species which invades according to a nonlocal diffusion within the space $\X$ meeting a reaction $F$; see e.g. \cite{Fif1979,Mur2003,SK1997}.

Below, we restrict ourselves  to the case where \eqref{eq:basic} has two constant solutions $u\equiv 0$ and $u\equiv \theta>0$ only. The main aim of the present paper is to find sufficient conditions for the so-called \emph{hair-trigger effect}.
The latter means that, unless $u_0\equiv0$, the corresponding solution to \eqref{eq:basic} achieves an arbitrary chosen level between $0$ and $\theta$ uniformly on an arbitrary chosen domain of $\X$ after a finite time.
In~other words, $u(x,t)$ converges, as $t\to\infty$, locally uniformly in $x\in\X$ to the positive stationary solution $u\equiv \theta$.
The latter constant solution, therefore, is globally asymptotically stable in the sense of the topology of local uniform convergence. Therefore, the equation \eqref{eq:basic} appears of the so-called monostable type; cf. also Remark~\ref{rem:monostability} below.

Firstly, a reaction-diffusion equation of the form \eqref{eq:RDform} was considered in the seminal paper \cite{KPP1937} by Kolmogorov--Petrovsky--Piskunov (KPP). There, for the local reaction $Fu=f(u)=u(1-u)^2$ (that corresponds to $Gu=2u-u^2$ in \eqref{eq:reaction}; we set also here $\ka-m=1$), the equation \eqref{eq:RDform} was derived from a model for the dispersion
of a spatially distributed species. To analyze the model, the authors used a diffusion scaling, which led to the classical local diffusion generator $\ka \Delta u$ (for $d=1$) instead of $L$ in \eqref{eq:RDform}. Moreover, they proposed the method which covered more general local reactions $Fu=f(u)$ as well. We will say that such local reaction $F$ has the \emph{KPP-type} if $f:\R\to\R$ is Lipschitz continuous on $[0,\theta]$ and 
\begin{equation}\label{eq:KPPtypereaction}
f(0)=f(\theta)=0;\qquad f'(0)>0;\qquad 0<f(r)\leq f'(0)r,\quad r\in(0,\theta).
\end{equation}
In particular, the logistic reaction $f(u)=u(\theta-u)$, that corresponds to the identical mapping $Gu=u$ in \eqref{eq:reaction}, satisfies \eqref{eq:KPPtypereaction}. The corresponding model was considered early by Fisher \cite{Fis1937}, it described the advance of a favorable allele through a spatially distributed population. 
Note that the conditions for the mapping $G$ (and hence, by product, for the reaction $F$) which we postulate in Section~\ref{sec:assumptionsandresults} below are reduced, in the case of a local reaction $Fu=f(u)$, to \eqref{eq:KPPtypereaction} (see Example~\ref{ex:local_reaction} below).

Later, the significance of nonlocal terms in diffusion and/or reaction in \eqref{eq:RDform} was stressed by many authors, in particular, in ecology and population biology, see e.g. \cite{LMNC2003, BCGR2014, CLMH2003}; see also recent papers \cite{Ayd2018,NTY2017} where the importance and observed effects of  nonlocal interactions in biological models are discussed. 

A natural nonlocal analogue of the Fisher--KPP equation with the mentioned local reaction $f(u)=u(\theta-u)$ is the  equation \eqref{eq:RDform} with both nonlocal diffusion generator \eqref{eq:nonlocdif} and the linear nonlocal mapping
$Gu=\kam a^-*u$ in \eqref{eq:reaction}, where $\kam>0$, $0\leq a^-\in L^1(\X)$ with $\int_\X a^-(x)\,dx=1$, and the convolution is defined as in \eqref{eq:defconv} (see Example~\ref{ex:Gu_equals_aminus_conv_u} below). The corresponding equations \eqref{eq:basic}, or \eqref{eq:RDform}, similarly to the classical Fisher--KPP equation, may be obtained from different models. In~particular, for the case $\ka=\ka^-$, $a=a^-$, it was obtained, for $m=0$ in \cite{Mol1972a, Mol1972} from a model of simple epidemic, whereas, for $m>0$, it was derived in \cite{Dur1988} from a crabgrass model on the lattice $\mathbb{Z}^d$. For different kernels $a$ and $a^-$, the equation \eqref{eq:basic} appeared in \cite{BP1997} from a population ecology model; see also \cite{BP1999,DL2000} and the rigorous derivation of \eqref{eq:basic} in \cite{FM2004, FKK2011a}

More generally, a nonlocal analogue of the local KKP-type reaction $f(u)=u(\theta-u)^n$ is, naturally, the reaction
\begin{equation}\label{eq:nlocreaction}
Fu = \gamma_n u (\theta-a^-*u)^n,\quad n\in\N,
\end{equation}
with $a^-$ is as above and $\gamma_n>0$ (see Example~\ref{ex:nonlocal_general} below). Note also that the equation \eqref{eq:RDform} with the nonlocal diffusion \eqref{eq:nonlocdif} and a local KPP-type reaction $Fu=f(u)$ was considered in \cite{Sch1980} motivated by an analogy to Kendall's epidemic model \cite{Ken1965}.

The first (up to our knowledge) result about the hair-trigger effect described above, for a non-linear evolution equation with the local diffusion, was shown by Kanel \cite{Kan1964}, for the cases of the combustion and the Fisher--KPP reaction-diffusion equations in the dimension $d=1$. Multidimensional analogues were shown by Aronson and Weinberger \cite{AW1975,AW1978}; in the latter reference the notion `hair-trigger' was, probably, firstly used.

For the nonlocal diffusion \eqref{eq:nonlocdif}, the first result about the hair-trigger effect for a solution to \eqref{eq:RDform} was  obtained in \cite{LPL2005}: for the one-dimensional case $d=1$, under additional restrictions on the probability kernel $a=a(x)$, and for a local reaction  $Fu=f(u)$ of the KPP-type given by \eqref{eq:KPPtypereaction}.

For the nonlocal diffusion in $\X$ with $d>1$, the hair-trigger effect, for the local reaction term $f(u)=u^{1+p}(1-u)$ with $p>0$, has been shown recently in \cite{Alf2016}, under additional assumptions on $a=a(x)$ (in particular, its radial symmetry was assumed). From this, by comparison-type arguments, it might be possible to show the hair-trigger effect for a 
local KPP-type reaction $Fu=f(u)$ described by  \eqref{eq:KPPtypereaction}, provided that, additionally, $f'(\theta)<0$.

To the best of our knowledge, the present paper is the first one 
that shows the hair-trigger effect for non-local reactions. 
In particular, we allow the reaction \eqref{eq:nlocreaction} 
in \eqref{eq:RDform}--\eqref{eq:reaction}, provided that an appropriate comparison between $a$ and $a^-$ is assumed (see Examples~\ref{ex:Gu_equals_aminus_conv_u}--\ref{ex:nonlocal_general} below).

Another novelty of the present paper, even for the case of the local KPP-type reactions $Fu=f(u)$ given by \eqref{eq:KPPtypereaction} is that we allow general anisotropic probability kernels $a=a(x)$, $x\in\X$ (see Example~\ref{ex:local_reaction} below).  Note, that, however, we do not cover the local reaction $f(u)=u^{1+p}(1-u)$ with $p>0$, considered in \cite{Alf2016}.

For results about the hair-trigger effect in other types of non-local equations see also \cite{Die1978a}.

The hair-trigger effect is an important tool in the study of the long-time behavior of evolution equations.
In particular, it allows one to study the front propagation of the solutions to the equations \cite{Gar2011,FKT2016,FT2017c};
it also yields the non-existence of other stationary solutions between the given two (see \cite[Proposition 5.12]{FKT2015} 
and cf. a discussion in \cite{DD2003}) and allows to demonstrate instability of non-monotonic traveling waves, cf.~\cite{Hag1981}.

Since the hair-trigger effect means just that a level set for a solution to \eqref{eq:basic} is going to contain an arbitrary large compact in $\X$ when time grows, it is naturally based on a estimate from below for the solution. Note that,
for the class of equations of the form \eqref{eq:basic} with a non-negative operator $G$ (see the assumption \eqref{assum:Gpositive} below), one can estimate the corresponding non-negative solution from above by the solution to the linearization of \eqref{eq:basic} at zero.
 Indeed, by the Duhamel's principle, if $v(\cdot ,0) \equiv u(\cdot, 0)$  and $\partial_t v = \ka a*v - mv$, then 
 $u(\cdot,t)\leq v(\cdot,t)$ point-wise for all $t\geq0$.
Then one can use estimates on $v$ (see e.g. \cite{GKPZ2018, FT2017b, AMRT2010}) to estimate $u$.
However, an estimate from below appears much more delicate problem, that seems to be typical for monostable-type evolution equations, since the
nonlinear structure of the equation \eqref{eq:basic} is essential in this case.

Both nonlocal diffusion and, in general nonlocal, reaction in \eqref{eq:RDform} require new methods in the proof of the hair-trigger effect. The mentioned results for the local diffusion (the Laplace operator instead of $L$ in \eqref{eq:RDform}--\eqref{eq:nonlocdif}) were 
based on the application of an auxiliary boundary-value problem \cite{Kan1964} (which works for $d=1$ only) or, in addition to properties of the Laplace operator, 
on the locality of the reaction term \cite{AW1975, AW1978}. 
These approaches are difficult (if possible at all) to repeat for \eqref{eq:RDform} even for a local reaction in $\X$. Stress also that, in the case of a nonlocal reaction in \eqref{eq:RDform}, the comparison principle (which is necessary for the hair-trigger effect) requires additional restrictions (see Theorem \ref{thm:compar_pr_basic} and also Remark~\ref{rem:neccond}).  

Our approach is based on an extension of the classical Weinberger's result for discrete dynamical systems \cite{Wei1982a} to the continuous-time dynamics defined by \eqref{eq:basic}. That result required, additionally, specific restrictions on the initial condition to \eqref{eq:basic} (see the beginning of Section~\ref{sec:hair_trigger} for  details) or, equivalently, it requires an additional analysis for small level sets of the solution to \eqref{eq:basic} (which we provide in Propositions~\ref{prop:subsolution}--\ref{prop:useBrandle} below).
A disadvantage of this approach is that we apply `a black box',
meaning that the result sacrifices the complete understanding for the 
behaviour of large level-sets of $u$.
On the other hand, our approach is rather general and could be applied to other 
(nonlocal) evolution equations.

The paper is organized as follows. We prove the hair-trigger effect for \eqref{eq:basic} (Theorems~\ref{thm:ht1myes}, \ref{thm:ht1mno}) in Section~\ref{sec:hair_trigger}, applying Weinberger's results \cite{Wei1982a} and getting its time-continuous counterpart for \eqref{eq:basic} in Proposition~\ref{prop:hairtrigger_general}; we also, in Propositions~\ref{prop:subsolution}--\ref{prop:useBrandle}, get rid of the restrictions on the initial conditions imposed in Weinberger's paper. The proof is done under additional assumptions on $G$ presented in Section~\ref{sec:assumptionsandresults}, which, in particular, ensure the comparison principle (Theorem~\ref{thm:compar_pr_basic}).  
In Sections~\ref{sec:exist_uniq} and \ref{sec:comparison_pr}, we prove the existence/uniqueness (Theorem~\ref{thm:exist_uniq_BUC}) and the comparison principle (Theorem~\ref{thm:compar_pr}) for some generalizations of \eqref{eq:basic}.

\section{Assumptions and main results}\label{sec:assumptionsandresults}

Recall, that we treat $u=u(x,t)$ as the local density of a system at the point $x\in\X$ and at the moment of time $t\in \R_+:=[0,\infty)$. We assume that the initial condition $u_0$ to \eqref{eq:basic} is a bounded function on $\X$.

Namely, we will consider the following Banach spaces of real-valued functions on $\X$: the space $\Cb$ of bounded continuous functions on $\X$ with $\sup$-norm, the space $\Buc$ of bounded uniformly continuous functions on $\X$ with $\sup$-norm, and the space $L^{\infty}(\X)$ of essentially bounded (with respect to the Lebesgue measure) functions on $\X$ with $\esssup$-norm.

Let $E$ be either of the spaces $\Buc$, $\Cb$ or $L^{\infty}(\X)$ with the corresponding norm denoting by $\|\cdot\|_E$. For an interval $I\subset\R_+$, let $C(I\to E)$ and $C^1(I\to E)$ denote the sets of all continuous and, respectively continuously differentiable, $E$-valued functions on $I$.

\begin{defn}\label{def:classicalsol}
Let $I$ be either a finite interval $[0,T]$, for some $T>0$, or the whole $\R_{+}:=[0,\infty)$. A function 
$u\in \Y_I:= C(I\to E)\cap C^1((I\setminus\{0\})\to E)$ which satisfies \eqref{eq:basic} and such that $u(\cdot,0)=u_0(\cdot)$ in $E$ is said to be a \emph{classical solution} to \eqref{eq:basic} on $I$. For brevity, we denote also
\begin{equation}\label{eq:defYT}
\Y_T:=\Y_{[0,T]}, \quad T>0; \qquad \Y_\infty:=\Y_{\R_+}.
\end{equation}
\end{defn}

We will write $v\leq w$, for $v,w\in E$, if $v(x) \leq w(x)$, $x\in\X$. Here and below, for the case $E=L^\infty(\X)$, we will treat the latter inclusion a.e. only. Set also, for an $r>0$, 
\[
	E_r^+:=\{v\in E: 0 \leq v \leq r\}.
\]

We denote by $T_y:E\to E$, $y\in\X$, the translation operator, given by
\begin{equation}\label{shiftoper}
	(T_y v)(x)=v(x-y), \quad x\in\X.
\end{equation}

A sequence of functions $(v_n)_{n\in\N}\subset E$ is said to be convergent  to a function $v \in E$ locally uniformly
if $(v_n)_{n\in\N}$ converges to $v$ uniformly on all compact subsets of $\X$. We denote this by
\[
	v_n \locun v,\quad n\to \infty,
\] 

Let also $B_r(x_0)$ denote the ball in $\X$ with the radius $r>0$ centered at the $x_0\in\X$. In the case $x_0=0\in\X$, we will just write $B_r:=B_r(0)$.

In Section~\ref{sec:exist_uniq}, we prove an existence and uniqueness result for a more general equation than \eqref{eq:basic}; it can be read in the case of \eqref{eq:basic} as follows
\begin{thm}\label{thm:exist_uniq_basic}
	Let $0\leq a\in L^1(\X)$ and \eqref{eq:normalized} hold. Let $G:E\to E$ be such that $Gv\geq0$ for all $0\leq v\in E$ and, for some $\kappa>0$,
\[
	\|Gv-Gw\|_E \leq e^{\kappa r} \|v-w\|_E, \quad v,w\in E^+_r, \ r>0.
\]
Then, for any $T>0$ and $0\leq u_0\in E$, there exists a unique nonnegative classical solution $u$ to \eqref{eq:basic} on $[0,T]$. In particular, $u\in\Y_\infty$.
\end{thm}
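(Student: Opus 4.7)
The plan is to reformulate \eqref{eq:basic} as the integral Duhamel equation
\[
u(x,t) = (\Phi u)(x,t) := e^{-mt}u_0(x) + \int_0^t e^{-m(t-s)}\bigl[\ka (a*u)(x,s) - u(x,s)(Gu)(x,s)\bigr]\,ds,
\]
solve it locally by the Banach contraction principle in a nonnegative ball of $C([0,\tau]\to E)$, and then extend globally by iteration, using an a priori $L^\infty$-bound derived from comparison with the linearization at zero. Equivalently, absorbing $Gu$ into the integrating factor rewrites $\Phi u$ in the manifestly nonnegative form
\[
(\Phi u)(x,t) = e^{-mt-\int_0^t (Gu)(x,s)\,ds}\,u_0(x) + \ka\int_0^t e^{-m(t-s)-\int_s^t (Gu)(x,\sigma)\,d\sigma}(a*u)(x,s)\,ds,
\]
which I will use to obtain nonnegativity and pointwise upper bounds.

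For the local step, fix $T>0$ and set $R := 2\|u_0\|_E e^{|\ka-m|T}+1$. On the complete metric space $X_{\tau,R}:=\{u\in C([0,\tau]\to E)\colon 0\leq u(s)\leq R\text{ for all } s\in[0,\tau]\}$ with the sup-norm, the integrating-factor form yields $0\leq\Phi u\leq e^{-mt}\|u_0\|_E + \ka R\tau$, so $\Phi(X_{\tau,R})\subset X_{\tau,R}$ for $\tau$ sufficiently small. For the contraction property, Young's inequality gives $\|a*(u-w)\|_E\leq\|u-w\|_E$, while the hypothesis yields $\|Gu-Gw\|_E\leq e^{\kappa R}\|u-w\|_E$ and $\|Gu\|_E\leq \|G 0\|_E + e^{\kappa R}R$ on $E_R^+$; hence, decomposing $u(Gu)-w(Gw)=(u-w)Gu + w(Gu-Gw)$, one arrives at
\[
\sup_{t\in[0,\tau]}\|(\Phi u)(t) - (\Phi w)(t)\|_E\leq \tau\,C(R)\sup_{t\in[0,\tau]}\|u(t)-w(t)\|_E
\]
for some $C(R)<\infty$. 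Choosing $\tau_0=\tau_0(R)>0$ small enough makes $\Phi$ a contraction, yielding a unique nonnegative mild solution on $[0,\tau_0]$.

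To reach $[0,T]$ I invoke the a priori bound: any nonnegative fixed point of $\Phi$ satisfies, by dropping the nonnegative $Gu$-integrals in the integrating-factor form, the pointwise inequality $u\leq v$, where $v$ solves the linearization $\partial_t v=\ka (a*v)-mv$ with $v(0)=u_0$. Young's inequality and Gr\"onwall give $\|v(t)\|_E\leq \|u_0\|_E e^{(\ka-m)t}$, so any solution on a subinterval of $[0,T]$ has $E$-norm bounded by $R-1$. Restarting the local construction with initial datum $u(k\tau_0)$ keeps the new initial norm under the same $R$, so the same $\tau_0$ works and finitely many iterations cover $[0,T]$; uniqueness at each step extends to $[0,T]$. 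Since $T$ is arbitrary, the family is consistent and yields a unique nonnegative global mild solution. Regularity is then routine: with $u\in C([0,T]\to E)$, the integrand of $\Phi$ lies in $C([0,T]\to E)$ by continuity of convolution and of $G$, so differentiating the Duhamel identity in $t$ produces $u\in C^1((0,T]\to E)$ satisfying \eqref{eq:basic} pointwise in $E$, hence $u\in\Y_\infty$.

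The main difficulty is the interplay between the exponentially-in-$R$ local Lipschitz constant of $G$ and the size of the a priori bound, which itself depends on $T$: if $R$ were chosen based on the current state $u(k\tau_0)$ rather than the global bound, the contraction time would shrink at each step and an unbounded number of iterations could be required to reach $T$. The uniform bound from comparison with the linearization at $0$ resolves this by decoupling the ball size from the current time, so a fixed $\tau_0$ suffices for every restart.
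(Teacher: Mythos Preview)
Your argument is correct and takes a genuinely different, shorter route than the paper's.

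Both approaches use the same integrating-factor Duhamel form and a Banach fixed-point step on a nonnegative ball to produce a local nonnegative solution. The difference lies in the global continuation. The paper does \emph{not} invoke an a~priori bound; instead it lets the ball radius grow from step to step via the recursion $r_{n+1}=r_n+\alpha m e^{1-\kappa r_n}$, which forces the contraction time $\Upsilon_{n+1}-\Upsilon_n\sim (r_n e^{\kappa r_n})^{-1}$ to shrink, and then proves a separate lemma (their Lemma~3.1) showing that $\sum_n (r_n e^{\kappa r_n})^{-1}=\infty$, so the total time still diverges. Your approach instead exploits the hypothesis $Gv\geq 0$ a second time: once the local solution is nonnegative, dropping the $Gu$-integrals in the integrating-factor form and applying Gr\"onwall to $\lVert u(t)\rVert_E$ yields the uniform bound $\lVert u(t)\rVert_E\leq \lVert u_0\rVert_E e^{(\ka-m)t}$ on $[0,T]$, so a single radius $R$ and a single step size $\tau_0$ suffice for every restart. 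This bypasses the delicate series estimate entirely. The paper is in fact aware of this a~priori bound (see the Remark opening their Section~3), but chose to give a construction that does not rely on it; under the stated hypotheses your route is strictly simpler and loses nothing.

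One small point of presentation: the sentence ``the pointwise inequality $u\leq v$'' is slightly compressed. Dropping the $Gu$-integrals gives only the integral inequality
\[
u(x,t)\leq e^{-mt}u_0(x)+\ka\int_0^t e^{-m(t-s)}(a*u)(x,s)\,ds,
\]
and one still needs a one-line Gr\"onwall (on $\sup_x u$ or on $\lVert u(t)\rVert_E$) to pass from this to the bound you actually use. Since the very next sentence does exactly that for the norm, the argument is complete; you may simply delete the intermediate reference to $v$ and state the norm bound directly.
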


To exclude the trivial case when $\|u(\cdot,t)\|_E$ converges to $0$ uniformly in time, we assume that
\begin{assum}\label{assum:kappa>m}
	\beta:=\ka-m>0.
\end{assum}

We suppose that there exist two constant solutions $u\equiv 0$ and $u\equiv \theta>0$ to \eqref{eq:basic}, more precisely,
\begin{assum}\label{assum:Gpositive}
	\begin{gathered}
	\textit{there exists $\theta>0$ such that }\\
		0=G0 \leq Gv\leq G\theta=\beta,\quad v \in E_\theta^+.
	\end{gathered} 
\end{assum}

We will also assume that $G$ is (locally) Lipschitz continuous in $E_\theta^+$, namely, 
\begin{assum}\label{assum:Glipschitz}
	\begin{gathered}
		\textit{there exists $\lt>0$, such that}\\
		\|Gv-Gw\|_E \leq \lt \|v-w\|_E,\quad v,w \in E_\theta^+.
	\end{gathered}
\end{assum}

We restrict ourselves to the case when the comparison principle for \eqref{eq:basic} holds.
Namely, we assume that the right-hand side of \eqref{eq:basic} is a (quasi-)monotone operator:
\begin{assum}\label{assum:sufficient_for_comparison}
	\begin{gathered}
		\textit{for some $p\geq0$ and for any $v,w\in E_\theta^+$ with $v\leq w$},\\
  	\ka a*v -v\, Gv + pv \leq \ka a*w -w\, Gw + pw.
	\end{gathered}
\end{assum}

In Section~\ref{sec:comparison_pr}, we also prove that the comparison principle holds for a more general equation than \eqref{eq:basic}; in the case of \eqref{eq:basic} it gives the following result.
\begin{thm}\label{thm:compar_pr_basic}
	Let \eqref{assum:kappa>m}--\eqref{assum:sufficient_for_comparison} hold. 
	\begin{enumerate}
		\item Let $T>0$ be fixed and $u_1,u_2\in\Y_T$ be such that, for all $t\in(0,T]$, $x\in\X$,
    \begin{gather*}
		\frac{\partial u_1}{\partial t} - \ka a*u_1 +mu_1 +u_1Gu_1 \leq \frac{\partial u_2}{\partial t} - \ka a*u_2 +mu_2 +u_2Gu_2,\\
		0 \leq u_1(x,t)\leq\theta, \qquad 0 \leq u_2(x,t)\leq \theta,\\
      0\leq u_{1}(x,0)\leq u_{2}(x,0)\leq \theta.
    \end{gather*}
    Then, for all $t\in[0,T]$, $x\in\X$,
	\begin{equation}\label{eq:comparineq}
		0\leq u_{1}(x,t)\leq u_{2}(x,t) \leq \theta.
	\end{equation}
\item Let $u\in\Y_\infty$ be a classical solution to \eqref{eq:basic}, given by Theorem~\ref{thm:exist_uniq_basic}, such that $0\leq u_0\leq \theta$. Then, for all $t\in\R_+$, $x\in\X$, 
	\[
		0\leq u(x,t)\leq \theta. 
	\]
In particular, combining two previous parts, we get the following statement.
\item Let functions $u_1,u_2\in \Y_\infty$  solve \eqref{eq:basic} and $0\leq u_{1}(x,0)\leq u_{2}(x,0)\leq \theta$, $x\in\X$. Then \eqref{eq:comparineq} holds for all $t\in\R_+$, $x\in\X$.
\end{enumerate}
\end{thm}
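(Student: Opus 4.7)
My plan is to address the three parts in sequence: part (1) is the core comparison and uses a quasi-monotone rewriting of the equation, part (2) establishes $E_\theta^+$-invariance by truncation of $G$, and part (3) is immediate.

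For part (1) set $w:=u_{1}-u_{2}$ and introduce the operator $Q(v):=\ka a\ast v-vGv+pv$, so that \eqref{assum:sufficient_for_comparison} states that $Q$ is monotone on $E_{\theta}^{+}$. Subtracting the two given differential inequalities and then adding and subtracting $pw$ on the right converts them into
\[
\partial_{t}w+(m+p)w\leq Q(u_{1})-Q(u_{2}).
\]
With $w^{+}:=\max(w,0)$ one has $u_{1}\leq u_{2}+w^{+}=\max(u_{1},u_{2})\in E_{\theta}^{+}$, so monotonicity of $Q$ gives $Q(u_{1})-Q(u_{2})\leq Q(u_{2}+w^{+})-Q(u_{2})$. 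Expanding the latter and using \eqref{assum:Glipschitz} together with the bound $Gu_{2}\leq\beta$ from \eqref{assum:Gpositive} yields the pointwise estimate
\[
Q(u_{2}+w^{+})-Q(u_{2})\leq \ka\, a\ast w^{+}+(p+\beta)w^{+}+\theta\lt\|w^{+}\|_{E}.
\]
Restricting to $\{w>0\}$ (where $w=w^{+}$) and using that $a\ast w^{+}\leq\|w^{+}\|_{E}$ since $a$ is a probability density, the function $M(t):=\|w^{+}(\cdot,t)\|_{E}$ satisfies $M(t)\leq C\int_{0}^{t}M(s)\,ds$ with $M(0)=0$; Gronwall yields $M\equiv0$, i.e.\ $u_{1}\leq u_{2}$.

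For part (2), direct use of part (1) with $u_{2}\equiv\theta$ (or $u_{1}\equiv0$) is blocked by its $E_{\theta}^{+}$ hypothesis, which is precisely what must be proved. I would instead truncate: define $\tilde{G}v:=G([v]_{\theta})$ where $[v]_{\theta}:=(v\vee0)\wedge\theta$ is the pointwise clipping. The operator $\tilde{G}$ is globally Lipschitz on $E$ (constant $\lt$), maps $E$ into $[0,\beta]$, and agrees with $G$ on $E_{\theta}^{+}$; by Theorem~\ref{thm:exist_uniq_basic} applied to $\tilde{G}$ there is a unique classical solution $\tilde{u}$ with initial datum $u_{0}$. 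Because $\tilde{G}\tilde{u}$ lies automatically in $[0,\beta]$, the two sign bounds follow by direct Gronwall arguments that do not invoke part~(1): for the lower bound, $w:=-\tilde{u}$ satisfies $\partial_{t}w=\ka a\ast w-(m+\tilde{G}\tilde{u})w$, so on $\{w>0\}$ dropping the non-positive second term together with $a\ast w\leq a\ast w^{+}\leq\|w^{+}\|_{E}$ gives $\partial_{t}w^{+}\leq\ka\|w^{+}\|_{E}$, forcing $w^{+}\equiv0$; for the upper bound, $w:=\tilde{u}-\theta$ satisfies $\tilde{G}\tilde{u}=G\theta=\beta$ on $\{w>0\}$ by construction, so $\partial_{t}w=\ka a\ast w-(m+\beta)w$ there and the analogous Gronwall forces $w^{+}\equiv0$. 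Hence $\tilde{u}\in E_{\theta}^{+}$, whence $\tilde{G}\tilde{u}=G\tilde{u}$, so $\tilde{u}$ also solves \eqref{eq:basic}; uniqueness in Theorem~\ref{thm:exist_uniq_basic} gives $u=\tilde{u}\in E_{\theta}^{+}$. Part (3) is then immediate from (1) and (2).

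The principal difficulty is the algebraic manipulation in part~(1) that exposes the quasi-monotonicity: a naive pointwise Lipschitz bound on $|u_{1}Gu_{1}-u_{2}Gu_{2}|$ would only yield terms proportional to $\|u_{1}-u_{2}\|_{E}$, which controls both $w^{+}$ and $w^{-}$ simultaneously and cannot close a Gronwall argument on $\|w^{+}\|_{E}$ alone; only the extra structure coming from \eqref{assum:sufficient_for_comparison} allows the right-hand side to be estimated in terms of $w^{+}$ only. The secondary subtlety lies in setting up the truncation in part~(2) so that the two sign comparisons for $\tilde{u}$ can be carried out independently of part~(1) and then transferred back to the original equation via uniqueness.
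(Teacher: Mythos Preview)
Your argument for part~(1) is correct and is a genuinely more elementary route than the paper's. The paper proves an abstract comparison result (Theorem~\ref{thm:compar_pr}) by a contraction--fixed-point argument on an auxiliary integral equation for $v=e^{Kt}(u_2-u_1)$, whereas you close a direct Gronwall inequality on $\|w^{+}(\cdot,t)\|_E$ after using the quasi-monotonicity \eqref{assum:sufficient_for_comparison} to channel the nonlinear difference through $Q(u_2+w^{+})-Q(u_2)$. Your approach is shorter and isolates exactly why \eqref{assum:sufficient_for_comparison} is needed; the paper's approach, in exchange, yields a general statement (Theorem~\ref{thm:compar_pr}) that applies even when $u_1$ is not known to lie in $E_\theta^{+}$, because the operator $\mathcal{F}_r$ truncates its argument.

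Your argument for part~(2), however, has a genuine gap. For the upper bound you assert that on $\{\tilde u>\theta\}$ one has $\tilde G\tilde u=G\theta=\beta$. This is true only for \emph{local} $G$: at a point $x$ with $\tilde u(x)>\theta$ the clipped function satisfies $[\tilde u]_\theta(x)=\theta$, but $(\tilde G\tilde u)(x)=(G[\tilde u]_\theta)(x)$ depends on the values of $[\tilde u]_\theta$ at \emph{all} points, which need not equal $\theta$. Without this claim the equation for $w=\tilde u-\theta$ reads
\[
\partial_t w=\ka a*w-(m+\tilde G\tilde u)w+\theta\bigl(\beta-\tilde G\tilde u\bigr),
\]
and the last term is only known to be nonnegative (by \eqref{assum:Gpositive}), so the Gronwall argument on $\|w^{+}\|_E$ does not close. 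Truncating only $G$ is the source of the trouble: the quasi-monotonicity \eqref{assum:sufficient_for_comparison} couples $\ka a*v$ with $vGv$, and clipping the argument of $G$ alone destroys that coupling. The paper avoids this by truncating the entire right-hand side, setting $Hv:=H(0\vee v\wedge\theta)$ for $v\notin E_\theta^{+}$; the extended $H$ remains globally Lipschitz and (crucially) still quasi-monotone on every $E_r^{+}$ with $r\ge\theta$, so the abstract comparison with built-in truncation (Theorem~\ref{thm:compar_pr}) can be applied with $u_1=\tilde u$, $u_2\equiv\theta$ even before $\tilde u\le\theta$ is known.
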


We assume next that the kernel $a$ is not degenerate at the origin, namely,
\begin{assum}\label{assum:a_nodeg}
	\textit{there exists $\varrho>0$ such that} \ a(x)\geq\varrho \text{ for a.a. } x\in B_\varrho(0).
\end{assum}

Stability of the solution to \eqref{eq:basic} with respect to the initial condition in the topology of locally uniform convergence requires continuity of $G$ in this topology:
\begin{assum}\label{assum:G_locally_continuous}
	\begin{gathered}	
		\textit{for any $v_n, v \in E_\theta^+$, such that $v_n \locun v$, $n \to \infty$, one has}\\
		Gv_n \locun Gv, \ n \to \infty.
	\end{gathered}
\end{assum}

We will consider the translation invariant case only:
\begin{assum}\label{assum:G_commute_T}
	\begin{gathered}
		\textit{let $T_y$, $y\in\X$, be a translation operator, given by \eqref{shiftoper}, then}\\
		(T_y G v)(x) = (G T_y v)(x),\quad v \in E_\theta^+,\  x\in\X.
	\end{gathered}
\end{assum}
Under \eqref{assum:G_commute_T}, for any $r\equiv const\in (0,\theta)$, $Gr\equiv const$. In this case, we assume also that
\begin{assum}\label{assum:G_increas_on_const}
  Gr < \beta, \qquad r\in(0,\theta).	
\end{assum}

In Section~\ref{sec:hair_trigger}, we prove the hair-trigger effect for the solutions to \eqref{eq:basic}. For technical reasons, it will be done separately  for kernels with and without the first moment.
Namely, for the kernels which satisfy the condition
\begin{assum}\label{assum:first_moment_finite}
	\int_\X \lvert y\rvert a(y)dy<\infty,
\end{assum}
we set
\begin{equation}\label{firstfullmoment}
  \m:=\ka\int_\X x a(x)\,dx\in\X,
\end{equation}
and assume, additionally to \eqref{assum:sufficient_for_comparison}, that
\begin{assum}\label{assum:improved_sufficient_for_comparison}
	\begin{gathered}
		\textit{there exist $q\geq 0$, $\delta>0$, $0\leq b\in C^\infty(\X)\cap L^\infty(\X)$, such that}\\ 
		a(x)-b(x)\geq \delta\1_{B_\delta(0)}(x), \quad x\in\X,\\
  	w\, Gw\leq \ka b*w + qw, \quad w\in E_\theta^+.
	\end{gathered}
\end{assum}

\begin{rem}\label{rem:notequiv} We are going to formulate now our main results about the hair-trigger effect for a solution to \eqref{eq:basic}. It requires that the initial condition to \eqref{eq:basic} is not degenerate: if $E$ is a space of continuous functions, this means that $u_0$ is not identically equal to zero, $u_0\not\equiv0$. 
For a brevity of notations, in the case $E=L^\infty(\X)$, we will treat $u_0\not\equiv0$ as follows: there exists $\delta>0$ and $x_0\in\X$, such that $u_0(x)\geq \delta$ for a.a.~$x\in B_\delta(x_0)$. 
\end{rem}

Then we can formulate the following 
\begin{thm}\label{thm:ht1myes}
	Let the conditions \eqref{assum:kappa>m}--\eqref{assum:improved_sufficient_for_comparison} hold.
	Let $u_0\in E_\theta^+$, $u_0\not\equiv0$ (cf. Remark \ref{rem:notequiv}), and let $u$ be the corresponding solution to \eqref{eq:basic}.
Then, for $\m$ defined by \eqref{firstfullmoment} and any compact set $K\subset \X$,
  \begin{equation}\label{eq:htformula}
    \lim_{t\to\infty} \essinf_{x\in K} u(x+t\m,t)=\theta.
  \end{equation}
\end{thm}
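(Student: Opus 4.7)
The plan is to combine a \emph{spreading} step, which turns an arbitrarily small non-trivial initial datum into a definite lower bound on a large ball, with a continuous-time version of Weinberger's abstract result \cite{Wei1982a} (to be stated as Proposition~\ref{prop:hairtrigger_general}). The drift $t\m$ in \eqref{eq:htformula} enters naturally at the second step, reflecting the first-moment asymmetry of $a$ encoded in \eqref{firstfullmoment}.

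For the reduction, by \eqref{assum:Gpositive} and Theorem~\ref{thm:compar_pr_basic} the solution stays in $E_\theta^+$; by Remark~\ref{rem:notequiv} there exist $\delta>0$ and $x_0\in\X$ with $u_0\ge\delta\1_{B_\delta(x_0)}$, and by translation invariance \eqref{assum:G_commute_T} one may assume $x_0=0$ (at the cost of translating $K$, which is harmless for \eqref{eq:htformula}).

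The spreading step is then the job of Propositions~\ref{prop:subsolution}--\ref{prop:useBrandle}: starting from $\delta\1_{B_\delta}$, I would construct a sub-solution whose support and infimum grow in time until, at some finite $T_*$, one has $u(\cdot,T_*)\ge c_*\1_{B_{R_*}}$ for any prescribed $c_*\in(0,\theta)$ and $R_*>0$. Here \eqref{assum:a_nodeg} is used to spread support by iterated convolution, while \eqref{assum:Gpositive}, \eqref{assum:Glipschitz} and especially \eqref{assum:G_increas_on_const} (together with $G0=0$) give $\beta-Gv>0$ whenever $v$ lies below any fixed level strictly below $\theta$, making the reaction strictly positive and driving the infimum up. The strengthened assumption \eqref{assum:improved_sufficient_for_comparison} allows one to dominate the nonlinear sink $u\,Gu$ by a convolution with a kernel $b\le a$ whose first moments are compatible with those of $a$, which is what permits the spreading analysis to be carried out in the moving frame $x\mapsto x+t\m$.

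With $u(\cdot,T_*)\ge c_*\1_{B_{R_*}}$ in hand, the comparison principle (Theorem~\ref{thm:compar_pr_basic}) lets one replace the initial datum of \eqref{eq:basic} by $c_*\1_{B_{R_*}}$, and Proposition~\ref{prop:hairtrigger_general} yields
\[
\lim_{s\to\infty}\essinf_{x\in K} u\bigl(x+(T_*+s)\m,\,T_*+s\bigr)=\theta
\]
for every compact $K\subset\X$; reparametrising $t=T_*+s$ and absorbing the constant shift $T_*\m$ into $K$ gives \eqref{eq:htformula}. I expect the spreading step to be the main obstacle: producing a definite positive lower bound on an arbitrarily large ball from an arbitrarily thin initial bump requires a delicate interplay of all the assumptions on $G$ and $a$, and it is here that the asymmetry of $a$ must be handled explicitly, whereas the final Weinberger step is applied essentially as a black box.
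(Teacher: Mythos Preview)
Your overall architecture matches the paper's: reduce to a large initial bump via Propositions~\ref{prop:subsolution}--\ref{prop:useBrandle}, then invoke Proposition~\ref{prop:hairtrigger_general}. However, Proposition~\ref{prop:hairtrigger_general} has \emph{two} hypotheses, and you address only one. The condition \eqref{eq:init_cond_is_large} is handled by your spreading step, but you never verify \eqref{hyp:descrete_front_nonempty}, i.e.\ that $\tfrac{1}{j}\m\in\inter(\Upsilon_{1/j})$ for every $j\in\N$. This is precisely the content of Proposition~\ref{prop:suff_H1}; it is nontrivial, requires the first-moment assumption \eqref{assum:first_moment_finite}, uses Lemma~\ref{lem:average_of_jump_gen_is_zero}, and is the place where the specific vector $\m$ (rather than some arbitrary $\n$) is singled out. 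Saying the drift ``enters naturally'' is not a proof; without this step you do not even know that the Weinberger set $\Upsilon_t$ has nonempty interior, let alone that $t\m$ lies in it.

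Your description of the spreading mechanism is also inaccurate. The claim that \eqref{assum:G_increas_on_const} gives $\beta-Gv>0$ whenever $v$ lies below a fixed level strictly less than $\theta$ is false: \eqref{assum:G_increas_on_const} concerns only \emph{constants} $r$, and for nonconstant $v\in E_\theta^+$ one has merely $0\le Gv\le\beta$ from \eqref{assum:Gpositive}. The paper's actual argument is different. Proposition~\ref{prop:useBrandle} uses \eqref{assum:improved_sufficient_for_comparison} to bound $u$ from below by a linear evolution and hence, at a fixed time, by a Gaussian $q_1\exp(-|x-x_0|^2/\tau)$ centred at the original bump (not yet in the moving frame). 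Proposition~\ref{prop:subsolution} then supplies a \emph{moving} Gaussian sub-solution $q\exp\bigl(-|x-t\m|^2/(\alpha t)\bigr)$. One matches parameters so that the former dominates the latter at one time, and comparison (Theorem~\ref{thm:compar_pr_basic}) propagates the moving Gaussian lower bound forward. Since this sub-solution never exceeds $q<q_0$, you cannot take $c_*$ arbitrarily close to $\theta$ as you assert; but that is not needed: one fixes $\sigma<q_0$ in \eqref{eq:init_cond_is_large}, and the moving Gaussian eventually exceeds $\sigma$ on any ball of radius $r_\sigma$ about $t\m$, which is all that is required.
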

	\begin{rem}
		Note that the correction term $t\m=t\ka\int_\X ya(y)dy$ in \eqref{eq:htformula} equals to the expected value of the compound Poisson process with the probability density $a$ and the intensity $\ka$.
	\end{rem}

An evident example of a probability kernel with an infinite first moment is the density  
$a(x) = c(1+\lvert x\rvert ^2)^{-\frac{1+d}{2}}$, $x\in\X$ of the multivariate Cauchy distribution; here $\lvert \cdot\rvert $ denotes the Euclidean norm in $\X$, and $c$ is the normalizing factor to ensure \eqref{eq:normalized}.
To include this and other cases, for the kernels which do not satisfy \eqref{assum:first_moment_finite}, we consider the following assumption:
\begin{assum}\label{assum:approx_of_basic}
	\begin{gathered}
		\textit{for each $n\in\N$, let there exist} 
		\\
		0\leq a_n\in L^1(\X), \quad \ka_n>0, \quad G_n:E\to E, \quad \theta_n\in(0,\theta]\\ 
		\textit{which satisfy \eqref{assum:kappa>m}--\eqref{assum:improved_sufficient_for_comparison} instead of $a$, $\ka$, $G$, $\theta$,}\\
		\textit{ correspondingly, such that}\\
		\m_n:=\ka_n\int_{\X}xa_n(x)dx\in\R, \quad \theta_n\geq\theta-\frac{1}{n}, \ n\in \N,\\ 
		\ka_n a_n*w -wG_nw \leq \ka a*w - wGw,\quad w\in E_{\theta_n}^+.
	\end{gathered}
\end{assum}
Then the following counterpart of Theorem~\ref{thm:ht1myes} holds.
\begin{thm}\label{thm:ht1mno}
	Let the condition \eqref{assum:approx_of_basic} hold.
Let $u_0\in E_\theta^+$, $u_0\not\equiv0$ (cf. Remark \ref{rem:notequiv}), and let $u$ be the corresponding solution to \eqref{eq:basic}.
Then, for any compact set $K\subset \X$ and for any $n\in\N$,
	\begin{equation*}
		\theta-\frac{1}{n} \leq \liminf_{t\to\infty} \essinf_{x\in K} u(x+t\m_n,t) \leq	\limsup_{t\to\infty} \essinf_{x\in K} u(x+t\m_n,t) \leq \theta.
	\end{equation*}
	In particular, if $\m_n=\widetilde{\m}\in\R$  for  all $n\geq n_0\in\N$, then
  \begin{equation*}
    \lim_{t\to\infty} \essinf_{x\in K} u(x+t\widetilde{\m},t)=\theta.
  \end{equation*}
\end{thm}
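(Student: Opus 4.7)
My plan is to reduce Theorem~\ref{thm:ht1mno} to Theorem~\ref{thm:ht1myes} via the comparison principle. For each $n\in\N$, the approximating data $(a_n,\ka_n,G_n,\theta_n)$ provided by \eqref{assum:approx_of_basic} define a bona fide equation of the form \eqref{eq:basic} (with the same mortality~$m$) that satisfies every hypothesis of Theorem~\ref{thm:ht1myes}: in particular, \eqref{assum:first_moment_finite} is in force because $\m_n\in\R$. The final inequality in \eqref{assum:approx_of_basic} says that, so long as a function $w$ lies in $E_{\theta_n}^+$, the right-hand side of the approximating equation is pointwise dominated by the right-hand side of \eqref{eq:basic}; this will turn any solution of the approximating problem into a subsolution of \eqref{eq:basic}.

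First I fix $n$ and pick a non-degenerate initial datum $u_0^{(n)}\in E_{\theta_n}^+$ with $u_0^{(n)}\le u_0$; the natural choice is $u_0^{(n)}:=u_0\wedge\theta_n$, which inherits non-degeneracy from $u_0$ in the sense of Remark~\ref{rem:notequiv} because $\theta_n>0$. Theorem~\ref{thm:exist_uniq_basic} then yields a unique classical solution $u_n\in\Y_\infty$ to the approximating equation with initial condition $u_0^{(n)}$, and the second part of Theorem~\ref{thm:compar_pr_basic}, applied to this equation, pins $u_n(\cdot,t)$ inside $E_{\theta_n}^+$ for every $t$. Theorem~\ref{thm:ht1myes}, applied to $u_n$, then delivers
\[
    \lim_{t\to\infty}\essinf_{x\in K}u_n(x+t\m_n,t)=\theta_n\ge\theta-\frac{1}{n}
\]
for every compact $K\subset\X$.

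The next step is to transfer this lower bound to $u$ by comparison. Since $u_n(\cdot,t)\in E_{\theta_n}^+$ at every time, the final inequality in \eqref{assum:approx_of_basic} applied with $w=u_n(\cdot,t)$ gives
\[
    \partial_t u_n - \ka a*u_n + m u_n + u_n Gu_n \le \partial_t u_n - \ka_n a_n*u_n + m u_n + u_n G_n u_n = 0,
\]
so $u_n$ is a subsolution of \eqref{eq:basic}. Together with $u_0^{(n)}\le u_0$ and the uniform bounds $u_n\le\theta_n\le\theta$ and $u\le\theta$ (the latter from the second part of Theorem~\ref{thm:compar_pr_basic} applied to $u$ itself), the first part of Theorem~\ref{thm:compar_pr_basic} yields $u_n\le u$ pointwise on $\X\times\R_+$. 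Hence
\[
    \liminf_{t\to\infty}\essinf_{x\in K}u(x+t\m_n,t)\ge\theta-\frac{1}{n},
\]
while the matching upper bound $\limsup_{t\to\infty}\essinf_{x\in K}u(x+t\m_n,t)\le\theta$ is immediate from $u\le\theta$. The \emph{in particular} assertion then drops out by sending $n\to\infty$ in this two-sided inequality along the subsequence on which $\m_n=\widetilde{\m}$.

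The only genuinely subtle point is the invocation of the comparison step: one must know that $u_n(\cdot,t)$ lives in $E_{\theta_n}^+$ for every $t>0$, because it is precisely under this containment that the inequality from \eqref{assum:approx_of_basic} produces the subsolution property. This invariance is exactly what the second part of Theorem~\ref{thm:compar_pr_basic} supplies once $u_0^{(n)}\in E_{\theta_n}^+$. Beyond that, the only minor care is the non-degeneracy of $u_0^{(n)}$, which in the $L^\infty$ setting follows from the hypothesis in Remark~\ref{rem:notequiv} combined with $\theta_n>0$.
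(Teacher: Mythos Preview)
Your proposal is correct and follows essentially the same route as the paper: build the approximating solution $u_n$, use the inequality in \eqref{assum:approx_of_basic} together with comparison to obtain $u_n\le u$, and then invoke Theorem~\ref{thm:ht1myes} for $u_n$. The only cosmetic differences are that the paper takes a single smooth $v_0\in E_{\theta/2}^+\cap C^\infty(\X)$ below $u_0$ (after assuming WLOG $\theta_n\ge\theta/2$) rather than $u_0\wedge\theta_n$, and it phrases the comparison dually---showing $u$ is a supersolution of the $n$-th equation and applying Theorem~\ref{thm:compar_pr} for $\mathcal{F}_{\theta_n}^{(n)}$, instead of showing $u_n$ is a subsolution of \eqref{eq:basic} and applying Theorem~\ref{thm:compar_pr_basic}.
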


In particular, if \eqref{assum:kappa>m}--\eqref{assum:improved_sufficient_for_comparison} hold and $\m=0\in\X$ or if \eqref{assum:approx_of_basic} holds and $\m_n=0\in\X$ for all $n\geq n_0\in\N$, then one gets the desired hair-trigger effect described above. 

\begin{rem}
Note that, indeed, for a properly `slanted' anisotropic kernel $a$ with $\m\neq 0\in\X$, the solution to \eqref{eq:basic} may converge to $0$ uniformly on any ball centered at the origin, whereas it will converge to $\theta$ on the `time-moving' ball according to Theorems~\ref{thm:ht1myes} or~\ref{thm:ht1mno}; see \cite{FKT2015} for the corresponding result in the case of the Example \ref{ex:Gu_equals_aminus_conv_u} described below.
\end{rem}

\subsection*{Examples}
\begin{exmp}[\bf Reaction--diffusion equation with a local reaction]\label{ex:local_reaction}
 A particular example of \eqref{eq:RDform}, with $F(u) = f(u)$ for a function $f:\R\to\R$, was considered e.g. in \cite{BCV2016,BCGR2014,CDM2013,CDM2008,CD2007,Cov2007,Yag2009,Gar2011,AGT2012,SLW2011,AC2016}. We assume \eqref{assum:kappa>m} and \eqref{assum:a_nodeg} as before, whereas the assumptions \eqref{assum:Gpositive}--\eqref{assum:sufficient_for_comparison}, \eqref{assum:G_locally_continuous}--\eqref{assum:G_increas_on_const}, \eqref{assum:improved_sufficient_for_comparison} are fulfilled if only
\begin{gather*}
	f \textit{ is Lipschitz continuous on } [0,\theta]; \\
	\lim\limits_{r\to 0 +} \frac{f(r)}{r} = \beta;\\ 
	f(0)=f(\theta)=0;\quad 0<f(r) \leq \beta r,  \ r\in(0,\theta). 
\end{gather*}

If \eqref{assum:first_moment_finite} does not hold, then, to fulfill \eqref{assum:approx_of_basic}, it is enough to take  $\ka_n = \ka$, $a_n(x):= \1_{\La_n}(x) a(x)$, provided that $\La_n\subset \X$ are such that $\La_n\uparrow \X$ and $\int_{\La_n} xa(x) dx = \widetilde{\m}$. In particular, if $a(-x)=a(x)$, $x\in\X$, one can take $\La_n:= B_n(0)$.
\end{exmp}

\begin{exmp}[\bf Spatial logistic equation: $\boldsymbol{Gu=\ka^- a^-*u}$]\label{ex:Gu_equals_aminus_conv_u}
Let $\kam>0$ and $a^-(x)$ be a probability kernel.
We consider $Gu=\kam a^-*u$, i.e. \eqref{eq:basic} has the form
\[
  \frac{\partial u}{\partial t} = \ka (a*u) - \kam u(a^-*u) - mu.
\]
This equation first appeared, for the case $\ka a=\kam a^-$, $m=0$, in \cite{Mol1972a,Mol1972}; for the case $\ka a=\kam a^-$, $m>0$ in  \cite{Dur1988}, and for the different kernels in \cite{BP1997}, where the so-called Bolker--Pacala model of spatial ecology was considered.
The equation was rigorously derived from the Bolker--Pacala model in \cite{FM2004} for integrable $u$ and in \cite{FKK2011a} for bounded $u$.
The long-time behavior of this equation was studied in \cite{FKT2015,FKT2016,FT2017c}, see also \cite{PS2005}.

We assume \eqref{assum:kappa>m} and \eqref{assum:a_nodeg} as before. Under  \eqref{assum:kappa>m}, we have in this case $\theta=\dfrac{\ka-m}{\kam}>0$. Then the conditions \eqref{assum:Gpositive}--\eqref{assum:Glipschitz}, \eqref{assum:G_locally_continuous}--\eqref{assum:G_increas_on_const} are satisfied. The condition \eqref{assum:sufficient_for_comparison} holds if and only if 
\begin{equation}\label{eq:compofkernels}
\ka a(x) \geq (\ka-m) a^-(x),\qquad x\in\X.
\end{equation}
Condition \eqref{assum:improved_sufficient_for_comparison} holds if we additionally assume that there exists  $\delta>0$, such that
\[
	\ka a(x) - (\ka-m) a^-(x) \geq \delta\1_{B_\delta(0)}(x), \qquad x\in\X.
\]
In this case we can put, in \eqref{assum:improved_sufficient_for_comparison}, $b(x) = (\ka-m) a^-(x)$, $q=0$.

If \eqref{assum:first_moment_finite} does not hold, then, to fulfill \eqref{assum:approx_of_basic}, one can proceed as in the previous example. Namely, we define $a_n$ as before, and we set $G_nu=\kam a^-_n*u$, where $a_n^-(x):=\1_{\La_n}(x)a^-(x)$, $x\in\X$.
\end{exmp}

\begin{exmp}[\bf The case $\boldsymbol{Gu=\kam a^-*u- g_1(a^-*u)}$]\label{ex:nonlocal_general}
Let $g(s) = \kam s - g_1(s)$, where $\kam>0$, $g_1:[0,\theta]\to \R_+$ is increasing  and Lipschitz continuous, such that $g_1(s) = o(s)$, as $s\to 0$ and $\kam s \geq g_1(s)$, for $s\in (0,\theta)$. We define $Gv=g(a^-*v)$, where $a^-$ is a probability kernel. Namely, we consider the following equation,
    \[
    	\frac{\partial u}{\partial t} = \ka (a*u) - \kam u(a^-*u) + u g_1(a^-*u) - mu.
    \]
		As in the previous example, \eqref{assum:sufficient_for_comparison} holds if and only if \eqref{eq:compofkernels} holds.
		The rest of the assumptions can be characterized straightforward.
    Typical example is $g(s) = \beta \bigl(1-\bigl(1-\frac{s}{\theta}\bigr)^n\bigr)$. In this case, the corresponding reaction  is 
    \[
    F(u) = \frac{\beta}{\theta^n} u (\theta-a^-*u)^n.
    \]
	\end{exmp}    

\section{Existence and uniqueness}\label{sec:exist_uniq}
In this Section, we will show the existence and uniqueness of non-negative solutions to a generalized version of \eqref{eq:basic} on $\R_+$, see Theorem~\ref{thm:exist_uniq_BUC} below. 
Note that the equation \eqref{eq:basic} itself is a semi-linear evolution (parabolic) equation on $E$. The condition \eqref{assum:Glipschitz} ensures that the nonlinear term $u\,Gu$ in \eqref{eq:basic} is locally Lipschitz. 
The general theory of semi-linear parabolic equations (see e.g. \cite[Theorem 6.1.4]{Paz1983}) provides existence and uniqueness of the so-called \emph{mild}
solution to \eqref{eq:basic} on the time interval $[0,t_{\mathrm{max}})$ for some $t_{{\mathrm{max}}}\leq\infty$. 
Since the operator \eqref{eq:nonlocdif} in \eqref{eq:basic} is bounded on $E$ and $G$ is continuous, this solution will be the classical one.
Moreover, if $t_{\mathrm{max}}<\infty$, then, with necessity, $\|u(\cdot,t)\|_{E}\to\infty$, as $t\nearrow t_{\mathrm{max}}$.
However, given $u_0\geq0$, the general theory does not ensure that $u(\cdot,t)\geq0$, $t\in[0,t_{\mathrm{max}})$.

\begin{rem} 1) Note that if we know \emph{a priori} that $u$ is non-negative on $[0,t_{\mathrm{max}})$, then $t_{\mathrm{max}}=\infty$, 
provided that $Gv\geq0$ for all $0\leq v\in E$ (cf. \eqref{assum:Gpositive} and the conditions of Theorem~\ref{thm:exist_uniq_basic}). Indeed, Duhamel's principle would imply then that $0\leq u(x,t)\leq e^{-mt}e^{tA}u_0(x)$, where $(Av)(x):=(a*v)(x)$, and hence  $\|u(\cdot,t)\|_{E}$ remains bounded on any finite time interval.

2) Another sufficient condition that would guarantee $t_{\mathrm{max}}=\infty$ is, therefore, the \emph{a priori} global boundedness of $u$. In the case of the `local' operator $G$, corresponding to the local reaction $Fu=f(u)$ in \eqref{eq:RDform} (cf. Example \ref{ex:local_reaction}), the global boundedness will follow from the comparison arguments considered in the Section~\ref{sec:comparison_pr} below (cf.~Theorem~\ref{thm:compar_pr_basic}). However, the case of a nonlocal operator $G$, and hence a nonlocal reaction $F$, would require a restrictive assumption \eqref{assum:sufficient_for_comparison} for comparison. Moreover, one can modify the example in \cite[pp.  2738--2739]{HR2014} to show that, in general, a solution to \eqref{eq:basic} does not need to be globally bounded on $\R_+$.

3) Note also, that any globally Lipschitz reaction $F$ (and hence globally Lipschitz product $u\,Gu$) would lead to $t_{\mathrm{max}}=\infty$ (see e.g. \cite[Theorem 3.2, 3.3]{Die1978}, \cite[Theorem 2.1]{Die1978a}).
\end{rem}

To avoid aforementioned additional assumptions for the non-local case of $G$ and $F$, we consider here a direct proof of the existence and uniqueness of non-negative solutions to (a generalized version of) the equation \eqref{eq:basic}. Our proof uses standard fixed point-arguments to get existence and uniqueness on consecutive time intervals $[\Upsilon_j,\Upsilon_{j+1}]$, $j\geq0$, $\Upsilon_0=0$. Then, using Lemma~\ref{lem:recurrence_sequence} below, we will show that $\sum_{j\geq0} (\Upsilon_{j+1}-\Upsilon_j)=\infty$ that implies the existence and uniqueness on an arbitrary time-interval.

\begin{lem}\label{lem:recurrence_sequence}
	Let $\{r_n\} _{n\in\N}$ be a sequence of numbers, such that $r_1>0$ and the following recurrence relation holds
	\begin{equation}\label{eq:recurrence_relation}
		r_{n+1} = r_n + p e^{-q r_n}, \quad n\in\N,
	\end{equation}
	where $p,q>0$. Then the series $\sum\limits_{n\in\N} \dfrac{1}{r_n e^{q r_n}}$ is divergent.
\end{lem}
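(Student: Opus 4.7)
The plan is to show that $r_n$ grows at most logarithmically in $n$, so that $r_n e^{qr_n}$ grows at most like $n\log n$, and then compare with the divergent series $\sum 1/(n\log n)$.

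First I would observe that $\{r_n\}$ is strictly increasing (since $pe^{-qr_n}>0$) and that $r_n\to\infty$: otherwise the putative finite limit $L$ would have to satisfy $L=L+pe^{-qL}$, which is impossible for $p>0$.

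The main step is to bound $e^{qr_n}$ from above by a linear function of $n$. Multiplying the recurrence \eqref{eq:recurrence_relation} by $q$ and exponentiating gives
\begin{equation*}
e^{qr_{n+1}} = e^{qr_n}\exp\bigl(qp\, e^{-qr_n}\bigr).
\end{equation*}
Using the elementary inequality $e^x\leq 1+2x$ on $[0,\log 2]$, together with $qpe^{-qr_n}\to 0$, I can pick $N$ so that $qpe^{-qr_n}\leq\log 2$ for all $n\geq N$, and then
\begin{equation*}
e^{qr_{n+1}} \leq e^{qr_n}\bigl(1+2qpe^{-qr_n}\bigr) = e^{qr_n}+2qp,\qquad n\geq N.
\end{equation*}
Iterating yields $e^{qr_n}\leq e^{qr_N}+2qp(n-N)$ for $n\geq N$, and after absorbing the finite initial segment into one constant this gives $e^{qr_n}\leq Cn$ for some $C=C(p,q,r_1)>0$ and all $n\in\N$.

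From this linear bound I get $r_n\leq \frac{1}{q}\log(Cn)$, and hence
\begin{equation*}
\frac{1}{r_n e^{qr_n}} \geq \frac{q}{Cn\log(Cn)}
\end{equation*}
for all sufficiently large $n$. Since $\sum 1/(n\log n)$ diverges by the integral test, so does the series in the statement. The only slightly delicate point is the linearization of $\exp(qpe^{-qr_n})$; it relies on $r_n$ being eventually large, which is already guaranteed by the first observation, so I do not expect any real obstacle.
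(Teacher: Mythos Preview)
Your proof is correct. Both your argument and the paper's hinge on the same idea: show that $e^{qr_n}$ grows at most linearly in $n$, so that $r_n e^{qr_n}$ is $O(n\log n)$, and then compare with $\sum 1/(n\log n)$. The execution differs slightly. The paper passes to $b_n:=e^{-qr_n}$, rewrites the recursion as $b_{n+1}=b_n e^{-pqb_n}$, and bounds $b_n$ from below by an auxiliary sequence $c_n$ satisfying $c_{n+1}=c_n/(1+pq(e-1)c_n)$, whose reciprocals form an arithmetic progression; this yields $b_n\gtrsim 1/n$. You instead work directly with $e^{qr_n}$ and use the cleaner bound $e^x\le 1+2x$ on $[0,\log 2]$ to get $e^{qr_{n+1}}\le e^{qr_n}+2qp$, hence $e^{qr_n}\le Cn$. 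Your route avoids the auxiliary sequence and the somewhat ad hoc inequality used in the paper, at the cost of only needing to wait until $qpe^{-qr_n}\le\log 2$; the endgame (comparison with $\sum 1/(n\log n)$) is identical.
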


\begin{proof} 
By \eqref{eq:recurrence_relation}, $r_n$, $n\in\N$ is a positive increasing sequence. Passing to the limit in \eqref{eq:recurrence_relation} when $n \to \infty$, one gets that $r_n \to \infty$, as $n \to \infty$. Hence, without loss of generality, one can assume that $b_n:=e^{-q r_n}< (pq)^{-1}$, $n\in\N$. One can rewrite then \eqref{eq:recurrence_relation} as follows: $b_{n+1}=b_n e^{-pq b_n}$. 
It is straightforward to check that
	\begin{equation*}
	  \frac{x}{1+pqx(e-1)}\leq y e^{-pqy} ,\quad 0 < x \leq y \leq \frac{1}{pq},
	\end{equation*}
Therefore, if we set $c_1:=b_1$ and $c_{n+1} := \frac{c_n}{1+pq(e-1)c_n}$, $n\in\N$, we get $c_n \leq b_n,\ n\in\N$. On the other hand, $\frac{1}{c_{n+1}} =\frac{1}{c_n} + pq(e-1)$, that leads to 
\begin{equation}\label{eq:cnasymp}
\frac{1}{c_{n+1}} = \frac{1}{c_1} + n(e-1)pq, \quad n\in\N.
\end{equation}
Therefore,
	\begin{equation*}
		\sum_{n\in\N} \frac{1}{r_n e^{q r_n}} = \sum_{n\in\N} \frac{b_n}{-q \ln b_n} \geq \sum_{n\in\N} \frac{c_n}{-q \ln c_n} = \infty,
	\end{equation*}
since, by \eqref{eq:cnasymp},
\[
	\frac{c_n}{-\ln c_n}\sim \frac{1}{pq(e-1)n \ln n}, \quad n\to\infty.
\]
	The statement is proved.
\end{proof}

Let $I\subset\R_+$ be a closed interval. The set $C_{b}(I\rightarrow E)$ of all continuous bounded $E$-valued functions on $I$ becomes a Banach space being equipped with the norm
\begin{equation*}
	\Vert u\Vert_{C_{b}(I\rightarrow E)}:=\sup\limits _{t\in I}\Vert u(\cdot,t)\Vert_{E}.
\end{equation*}
For simplicity of notation, we denote also
\begin{equation}
\label{eq:norm_notation} 
\begin{aligned}
	\|u\|_{T_1,T_2}&:=\|u\|_{C_b([T_1,T_2]\to E)}, && 0< T_1<T_2;\\[2mm]
	\|u\|_{T}&:=\|u\|_{C_b([0,T]\to E)},  && T>0.
\end{aligned}
\end{equation}
We are ready to prove now the existence and uniqueness result. 

\begin{thm}\label{thm:exist_uniq_BUC}
	Let $A ,G:E\to E$ be such that $Gv\geq0$ and $Av\geq0$ for all $0\leq v\in E$, and, for some $\kappa, \ka>0$,
\begin{align}
	\|A  v-A  w\|_E &\leq \ka \|v-w\|_E, && v,w \in E,\ v\geq0,\ w\geq0, \label{eq:F_loc_lip}\\
	\|Gv-Gw\|_E &\leq e^{\kappa r} \|v-w\|_E, && v,w\in E^+_r,\ r>0. \label{eq:G_loc_lip}
\end{align}
Then, for any $T>0$ and $0\leq u_0\in E$, there exists a unique nonnegative classical solution $u\in \Y_T$ (cf.~Definition~\ref{def:classicalsol}) to the equation 
\begin{equation}
	\begin{cases}
		  \dfrac{\partial u}{\partial t}(x,t) = (A  u)(x,t)-mu(x,t)-u(x,t)(Gu)(x,t),\\[2mm]
		  u(x,0) = u_0(x),
	\end{cases}\label{eq:basic_gen}
\end{equation}
where $t\in (0,T]$, $x\in\X$.
\end{thm}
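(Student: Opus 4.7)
The plan is to prove existence, uniqueness, and non-negativity on $[0,T]$ by solving \eqref{eq:basic_gen} on a sequence of consecutive short intervals $[\Upsilon_j,\Upsilon_{j+1}]$ via Banach's fixed-point theorem, and then invoking Lemma~\ref{lem:recurrence_sequence} to show $\sum_{j\geq 0}(\Upsilon_{j+1}-\Upsilon_j)=\infty$, so that any finite $T$ is reached in finitely many steps. The key device is to recast \eqref{eq:basic_gen}, for each fixed $x\in\X$, as a linear first-order ODE in $t$ with source $(Au)(x,\cdot)$ and multiplier $m+(Gu)(x,\cdot)$; the variation-of-constants formula then gives the mild formulation
\[
(\Psi u)(x,t):=u_0(x)\,e^{-mt-\int_0^t(Gu)(x,s)\,ds}+\int_0^t(Au)(x,s)\,e^{-m(t-s)-\int_s^t(Gu)(x,r)\,dr}\,ds,
\]
which is manifestly non-negativity preserving: $u\geq 0$ implies both $Au\geq 0$ and $Gu\geq 0$, hence $\Psi u\geq 0$.

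For the local existence step, fix $r\geq\|u_0\|_E$, set $R:=2r$, and work on the closed subset
\[
\mathcal{M}_{\tau,R}:=\bigl\{u\in C_b([0,\tau]\to E):0\leq u(\cdot,t),\ \|u(\cdot,t)\|_E\leq R\text{ for all }t\in[0,\tau]\bigr\}.
\]
Combining \eqref{eq:F_loc_lip}, \eqref{eq:G_loc_lip} and the product bound $\|vGv-wGw\|_E\leq \|v\|_E\|Gv-Gw\|_E+\|Gw\|_E\|v-w\|_E$, a direct computation produces constants $C_1(R),C_2(R)$ of order $R\,e^{\kappa R}$ (plus lower-order terms involving $\|A0\|_E$ and $\|G0\|_E$) such that, for $u,v\in\mathcal{M}_{\tau,R}$,
\[
\|\Psi u-\Psi v\|_\tau\leq \tau\,C_1(R)\,\|u-v\|_\tau,\qquad \|\Psi u\|_\tau\leq r+\tau\,C_2(R).
\]
Choosing $\tau=\tau(r):=c/(r\,e^{qr})$ with $q>\kappa$ and $c>0$ sufficiently small turns $\Psi$ into a $\tfrac12$-contraction of $\mathcal{M}_{\tau(r),R}$ into itself, so Banach's theorem supplies a unique non-negative fixed point $u\in C_b([0,\tau(r)]\to E)$, which, by direct differentiation of the mild formula using continuity of $A,G$, belongs to $\Y_{\tau(r)}$.

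Now chain these local pieces: put $\Upsilon_0:=0$, $r_0:=\|u_0\|_E$, and inductively solve on $[\Upsilon_j,\Upsilon_{j+1}]$ with $\Upsilon_{j+1}:=\Upsilon_j+\tau(r_j)$, initial datum $u(\cdot,\Upsilon_j)$ of norm $r_j$, and the new bound $r_{j+1}:=\|u(\cdot,\Upsilon_{j+1})\|_E$. A Gronwall-type estimate on the integral representation gives
\[
r_{j+1}\leq r_j+\tau(r_j)\bigl(\ka r_j+\|A0\|_E\bigr)\leq r_j+p\,e^{-q r_j}
\]
for some $p>0$, which is exactly the recurrence assumed in Lemma~\ref{lem:recurrence_sequence}. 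Hence $\sum_{j\in\N}1/(r_j e^{q r_j})=\infty$, i.e.\ $\sum_j\tau(r_j)=\infty$ and $\Upsilon_j\nearrow\infty$, so the solution extends to $[0,T]$ for every $T>0$; uniqueness follows from Banach on each interval and concatenation.

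The main obstacle I expect is the calibration in the iteration step: the step length $\tau(r_j)$ must be small enough (of order $1/(r_j e^{q r_j})$) to close the contraction against the local Lipschitz constant of $u\,Gu$, which blows up like $r\,e^{\kappa r}$, yet large enough that the induced norm-increments $r_{j+1}-r_j$ still have the precise form $p\,e^{-q r_j}$ demanded by Lemma~\ref{lem:recurrence_sequence}. Once this single exponent $q$ is fixed consistently, the remaining ingredients (the upgrade from the mild to a classical solution, and the concatenation yielding uniqueness on $[0,T]$) are routine.
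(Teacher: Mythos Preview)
Your approach is essentially the paper's: the same variation-of-constants map (called $\Phi_\tau$ there), the same Banach fixed-point on balls of non-negative functions, and the same chaining via Lemma~\ref{lem:recurrence_sequence}. The only execution difference is that the paper defines the bounds $r_j$ by the \emph{exact} recurrence $r_{j+1}=r_j+\alpha m e^{1-\kappa r_j}$ (as prescribed upper bounds rather than the actual norms $\|u(\cdot,\Upsilon_j)\|_E$) and takes the ball radius $r=\mu+\alpha m e^{1-\kappa\mu}$ only slightly larger than the current norm $\mu$ instead of doubling to $R=2r$; this lets Lemma~\ref{lem:recurrence_sequence} apply verbatim with $q=\kappa$ and sidesteps precisely the calibration issue you flagged.
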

\begin{proof}
First, we note that, by \eqref{eq:F_loc_lip},
\begin{equation}\label{eq:F_norm_est}
	\|A  v\|_E \leq \|A  0 \|_E+\ka\|v\|_E, \quad 0\leq v\in E.
\end{equation}
We set $f_0:= \|A  0\|_E$. 

Let $T>0$ be arbitrary. Take any $0\leq v\in C_b([0,T]\to E)$. For any $\tau\in[0,T)$, consider the following linear equation in the space $E$ on the interval $[\tau,T]$:
	\begin{equation}
		\begin{cases}
				\dfrac{\partial u}{\partial t}(x,t) = (A  v)(x,t) -mu(x,t) -u(x,t)(Gv)(x,t), &t\in (\tau,T]\\[2mm]
				u(x,\tau) = u_\tau(x), 
		\end{cases}\label{eq:exist_uniq_BUC:basic_lin}
	\end{equation}
	where $0\leq u_\tau\in E$, $\tau>0$, and $u_0$ is the same as in \eqref{eq:basic_gen}.
By assumptions on $A $ and $G$, we have that $A  v, Gv \in C_b([0,T]\to E)$ for all ${v\in C_b([0,T]\to E)}$.
In the right-hand side of \eqref{eq:exist_uniq_BUC:basic_lin}, there is a time-dependent linear bounded operator (acting in $u$) in the space $E$ whose coefficients are continuous on $[\tau,T]$.
Therefore, there exists a unique solution to \eqref{eq:exist_uniq_BUC:basic_lin} in $E$ on $[\tau,T]$, given by $u=\Phi_\tau v$ with
	\begin{equation}
		(\Phi_\tau v)(x,t):=(Bv)(x,\tau,t)u_\tau(x)+\int_\tau^t(Bv)(x,s,t)(A v)(x,s)\,ds,
		\label{eq:exist_uniq_BUC:Phi_v}
	\end{equation}
	for $x\in\X$, $t\in[\tau,T]$, where we set
	\begin{equation}
		(Bv)(x,s,t):=\exp\biggl(-\int _{s}^t \big(m + (Gv)(x,p) \big)dp\biggr),\label{eq:exist_uniq_BUC:B}
	\end{equation}
	for $x\in\X$, $t,s\in[\tau,T]$. Note that, in particular, $(\Phi_\tau v)(\cdot,t), (Bv)(\cdot,s,t)\in E$.	Clearly, $(\Phi_\tau v)(x,t)\geq0$ and, for any $\Upsilon\in(\tau,T]$,
	\begin{equation}\label{eq:est}
		\|\Phi_\tau v(\cdot,t)\|_E \leq \|u_\tau\|_E +(f_0+\ka  \|v\|_{\tau,\Upsilon})(\Upsilon-\tau), \qquad t\in[\tau,\Upsilon],
	\end{equation}
	where we used \eqref{eq:F_norm_est} and the notation \eqref{eq:norm_notation}. Therefore, $\Phi_\tau$ maps $\{ 0\leq v\in C_b([\tau,\Upsilon]\to E)\}$ into itself, $\Upsilon\in(\tau,T]$.

	For any $T_2>T_1\geq0$ and $r>0$, we define
	\begin{equation}\label{eq:defofballinxtplus}
		\x_{T_1,T_2}^+(r):=\bigl\{ v\in C_b([T_1,T_2]\to E) \bigm| v\geq0, \|v\|_{T_1,T_2} \leq r\bigr\}.
	\end{equation}

	Let now $0\leq\tau < \Upsilon\leq T$, and take any $v,w\in\x_{\tau,\Upsilon}^+(r)$.
	By \eqref{eq:exist_uniq_BUC:Phi_v}, one has, for any $x\in\X$, $t\in[\tau,\Upsilon]$,
	\begin{equation}\label{estbyJ}
		\bigl|(\Phi_\tau v)(x,t)-(\Phi_\tau w)(x,t)\bigr|\leq J_1+J_2,
	\end{equation}
	where
	\begin{align*}
		J_1&:=\bigl|(Bv)(x,\tau,t)-(Bw)(x,\tau,t)\bigr|u_\tau(x),\\
		J_2&:=\int_\tau^t\bigl|(Bv)(x,s,t)(A  v)(x,s) -(Bw)(x,s,t)(A  w)(x,s)\bigr|\,ds.
	\end{align*}
	Clearly, for each $a\in L^1(\X)$, $f\in E$,
	\begin{equation}\label{convbdd}
    \bigl\lvert (a*f)(x)\bigr\rvert\leq \|f\|_E \,\|a\|_{L^1(\X)}.
  \end{equation}
	Since $|e^{-a}-e^{-b}|\leq |a-b|$, for any constants $a,b\geq0$, one has, by \eqref{eq:exist_uniq_BUC:B}, \eqref{convbdd},
	\begin{equation}
		J_1\leq e^{\kappa r}(\Upsilon-\tau)\lVert u_\tau\rVert_E \lVert v-w\rVert_{\tau,\Upsilon}.\label{eq:exist_uniq_BUC:Phi_est_i}
	\end{equation}
	Next, for any constants $a,b,p,q\geq0$,
	\begin{equation*}
		\bigl|pe^{-a}-qe^{-b}\bigr|\leq e^{-a}|p-q|+q\max\bigl\{e^{-a},e^{-b}\bigr\}|a-b|,
	\end{equation*}
	therefore, by \eqref{eq:exist_uniq_BUC:B}, \eqref{convbdd},
	\begin{align}
		J_2&\leq  \ka\int_\tau^t(Bv)(x,s,t)\,ds\lVert v-w\rVert_{\tau,\Upsilon}\notag\\
			&\quad+\int_\tau^t\max\bigl\{(Bv)(x,s,t),(Bw)(x,s,t)\bigr\}|(A  w)(x,s)|\notag (t-s)e^{\kappa r}\lVert v-w\rVert_{\tau,\Upsilon}\,ds\notag\\
			&\leq \ka(\Upsilon-\tau) \lVert v-w\rVert_{\tau,\Upsilon} + e^{\kappa r}(f_0+\ka\lVert w\rVert_{\tau,\Upsilon})\lVert v-w\rVert_{\tau,\Upsilon}\int_\tau^t e^{-m(t-s)}(t-s)\,ds\notag\\
			&\leq  \Bigl(\ka +(f_0+\ka\lVert w\rVert_{\tau,\Upsilon})\frac{e^{\kappa r}}{me}\Bigr) (\Upsilon-\tau) \lVert v-w\rVert_{\tau,\Upsilon},\label{est2in1}
	\end{align}
	as $re^{-r}\leq e^{-1}$, $r\geq0$.

		Take any $\mu\geq\lVert u_\tau\rVert_E$. By \eqref{eq:est}--\eqref{est2in1}, one has,
	\begin{align*}
		\bigl|(\Phi_\tau v)(x,t)-(\Phi_\tau w)(x,t)\bigr| &\leq
		\Bigl(\mu e^{\kappa r} +\ka +(f_0+\ka r) \frac{e^{\kappa r}}{me} \Bigr) (\Upsilon -\tau)\lVert v-w\rVert_{\tau,\Upsilon},\\
		\bigl|(\Phi_\tau v)(x,t)\bigr| &\leq \mu+(f_0+\ka r) (\Upsilon -\tau).
	\end{align*}
	Therefore, $\Phi_\tau$ will be a contraction mapping on the set $\x_{\tau,\Upsilon}^+(r)$ if only
	\begin{equation*}
 		\Bigl(\mu e^{\kappa r}+\ka +(f_0+\ka r) \frac{e^{\kappa r}}{me}\Bigr) (\Upsilon -\tau) <1 \quad \text{and} \quad \mu+(f_0 +\ka r) (\Upsilon -\tau)\leq r.
	\end{equation*}
	If $\frac{f_0}{\ka} \leq r$, it is sufficient to show
	\begin{equation}\label{need1}
 		\Bigl(\mu e^{\kappa r} +\ka + 2\ka r \frac{e^{\kappa r}}{me}\Bigr) (\Upsilon -\tau) <1 \quad \text{and} \quad \mu + 2\ka r (\Upsilon -\tau)\leq r.
	\end{equation}

	Take for $\alpha\in(0,1)$,
	\begin{equation}\label{settings}
		\begin{gathered}
			r:=\mu+\alpha me^{1-\kappa \mu}, \qquad  \Upsilon :=\tau+\frac{\alpha me}{2\ka re^{\kappa r}}.
		\end{gathered}
	\end{equation}
	Then, the second inequality in \eqref{need1} holds, since $e^{\kappa r}$ is increasing, namely,
	\begin{equation*}
		\mu+2\ka r(\Upsilon-\tau) = \mu + \alpha me^{1-\kappa r} \leq \mu + \alpha me^{1-\kappa \mu} = r.
	\end{equation*}
	Next	
	\begin{equation*}
		\Bigl(\mu e^{\kappa r} + \ka + \frac{2 \ka r e^{\kappa r}}{me}\Bigr)(\Upsilon-\tau) = \frac{\alpha me \mu}{2\ka r} + \frac{\alpha me}{2re^{\kappa r}} +\alpha \leq \frac{\alpha me}{2\ka} + \frac{\alpha me} {2re^{\kappa r}} + \alpha.
	\end{equation*}
	In order to satisfy the second inequality in \eqref{need1} it is sufficient to check,
	\begin{equation*}
		\frac{\alpha me}{2\ka} + \frac{\alpha me} {2re^{\kappa \mu}} < 1-\alpha,
	\end{equation*}
	but $re^{\kappa \mu} = \mu e^{\kappa \mu} + \alpha me$, i.e. we need
	\begin{equation}\label{need111}
	  \frac{\alpha me}{2(\mu e^{\kappa \mu} + \alpha me)} + \frac{\alpha me} {2 \ka} < 1-\alpha.
	\end{equation}
	
		Choose $\alpha\in(0,1)$, such that $\frac{\alpha me}{2\ka} < 1-\alpha$, and then choose $\mu>0$ large enough to ensure \eqref{need111}. 
	As a result, one gets that $\Phi_\tau$ will be a contraction on the set $\x_{\tau,\Upsilon}^+(r)$ with $\Upsilon$ and $r$ given by \eqref{settings}; the latter set naturally forms a complete metric space. Therefore, there exists a unique $u\in \x_{\tau,\Upsilon}^+(r)$ such that $\Phi_\tau u=u$. This $u$ will be a solution to \eqref{eq:basic_gen} on $[\tau,\Upsilon]$.

To fulfill the proof of the statement, one can do the following.
Set $\tau:=0$, choose $r_0>\max \{\|u_0\|_E,\frac{f_0}{\ka}\} $ and $\alpha \in (0,1)$ that satisfy \eqref{need111} with $\mu=r_0$.
One gets a solution $u$ to \eqref{eq:basic_gen} on $[0,\Upsilon_1]$ with $\lVert u\rVert_{\Upsilon_1} \leq r_0+\alpha m e^{1-\kappa r_0} =: r_1$, $\Upsilon_1 = \frac{\alpha me^{1 -\kappa r_1}}{2\ka r_1}$.

	Iterating this scheme, take sequentially, for each $n\in\N$, $\tau:=\Upsilon_n$, $x\in\X$,
	\[
		r_{n}:=r_{n-1}+ \alpha me^{1-\kappa r_{n-1}}\geq \|u(\cdot, \Upsilon_n)\|_E.
	\]
	Since $r_{n}>r_{n-1}$ and $e^{\kappa r}$ is increasing, the same $\alpha$ as before will satisfy \eqref{need111} with $\mu=r_{n}$ as well. Then, one gets a solution $u$ to \eqref{eq:basic_gen} on $[\Upsilon_n,\Upsilon_{n+1}]$ with initial condition $u_{\Upsilon_n}$, where
	\begin{gather}\label{eq:Tn}
		\Upsilon_{n+1} := \Upsilon_n+\frac{\alpha me^{1-\kappa r_n}}{2\ka r_{n}},\\
	\shortintertext{and}
		\|u\|_{\Upsilon_n,\Upsilon_{n+1}} \leq r_{n}+ \alpha me^{1-\kappa r_n} = r_{n+1}.\notag
	\end{gather}
	As a result, we will have a solution $u$ to \eqref{eq:basic_gen} on intervals $[0,\Upsilon_1]$, $[\Upsilon_1,\Upsilon_2]$, \ldots, $[\Upsilon_n,\Upsilon_{n+1}]$, $n\in\N$.
By \eqref{eq:F_loc_lip}--\eqref{eq:G_loc_lip}, the right-hand side of \eqref{eq:basic_gen}, will be continuous on each of constructed time-intervals, therefore, one has that $u$ is continuously differentiable on $(0,\Upsilon_{n+1}]$ and solves \eqref{eq:basic} there. By \eqref{eq:Tn} and Lemma~\ref{lem:recurrence_sequence},
	\[
		\Upsilon_{n+1} = \frac{\alpha me}{2\ka} \sum_{j=0}^{n}  \frac{1}{r_j e^{\kappa r_j}} \to\infty , \quad n\to\infty,
	\]
	therefore, one has a solution to \eqref{eq:basic_gen} on any $[0,T]$, $T>0$.

To prove uniqueness, suppose that $v\in {C_b([0,T]\to E)}$ is a solution to \eqref{eq:basic_gen} on $[0,T]$, with $v(x,0)\equiv u_0(x)$, $x\in\X$. Choose $r_0 > \|v\|_T \geq \|u_0\|_E$. Since $\{r_n\}_{n\geq0}$ above is an increasing sequence, $v$ will belong to each of sets  $\x_{\Upsilon_n,\Upsilon_{n+1}}^+(r_{n+1})$, $n\geq0$, $\Upsilon_0:=0$, considered above. Then, being solution to \eqref{eq:basic_gen} on each $[\Upsilon_n,\Upsilon_{n+1}]$, $v$~will be a fixed point for $\Phi_{\Upsilon_n}$. By the uniqueness of such a point, $v$ coincides with $u$ on each $[\Upsilon_n,\Upsilon_{n+1}]$ and, thus, on the whole $[0,T]$.
	As a result, $u(x,t) = (\Phi_0u)(x,t)$, for $x\in\X$, $t\geq0$.	Since $u\in {C_b([0,T]\to E)}$, then $u = \Phi_0 u\in C^1((0,T]{\to}E)$. Thus $u$ is a classical solution to \eqref{eq:basic}. The proof is fulfilled.
\end{proof}

\begin{rem}
	Since $A  v:=\ka a*v$, $v\in E$, evidently satisfies conditions of Theorem \ref{thm:exist_uniq_BUC}, one gets Theorem~\ref{thm:exist_uniq_basic}.
\end{rem}

\begin{prop}\label{prop:loc_uni}
Let the conditions of Theorem~\ref{thm:exist_uniq_BUC} hold.
Suppose, additionally, that $A $ and $G$ are continuous on $\{0 \leq v\in E\}$ in the topology of locally uniform convergence, i.e. for any $v_n, v \in E$, $v_n\geq 0$, $v\geq 0$, with $v_n \locun v$, one has
\[
	A  v_n \locun A  v,\qquad  G v_n \locun G v, \qquad n \to \infty.
\]
Let $T>0$ be fixed and, for some $\varrho>0$, $\{u(\cdot,0), u_n(\cdot,0):n\in\N\}\subset E_\varrho^+$ be the initial conditions to \eqref{eq:basic_gen}, and let $\{u(\cdot,t), u_n(\cdot,t):n\in\N\}$ be the corresponding solutions to \eqref{eq:basic_gen} on $[0,T]$. Assume that $u_n(\cdot,0)\locun u(\cdot,0)$, $n\to\infty$. Then $u_n(\cdot,t)\locun u(\cdot,t)$, $n\to\infty$ uniformly in $t\in[0,T]$.
\end{prop}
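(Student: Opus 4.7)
The plan is to reduce the proposition to a Picard-iteration bootstrap on a uniform short time interval and then to propagate the conclusion across $[0,T]$. First, writing $f_0:=\|A0\|_E$ and using $u_n\ge 0$, $m\ge 0$ and $Gu_n\ge 0$, one has $\partial_t u_n\le Au_n$, so from \eqref{eq:F_loc_lip} and Gr\"onwall's inequality
\[
\|u_n(\cdot,t)\|_E\le(\varrho+f_0 T)e^{\ka T}=:M,\qquad t\in[0,T],\ n\in\N,
\]
and the same bound holds for $u$. Re-reading the proof of Theorem~\ref{thm:exist_uniq_BUC} with the parameter $\mu$ replaced by $\max(M,f_0/\ka)$ throughout yields a radius $r\ge M$, a time step $\Upsilon>0$ and a contraction constant $\alpha\in(0,1)$ depending only on $M,m,\ka,\kappa,f_0$ such that, for every $n\in\N$, the operator $\Phi_{\tau,n}$ defined by \eqref{eq:exist_uniq_BUC:Phi_v}--\eqref{eq:exist_uniq_BUC:B} with initial datum $u_n(\cdot,\tau)$ is an $\alpha$-contraction on $\x^+_{\tau,\tau+\Upsilon}(r)$, whose unique fixed point equals $u_n|_{[\tau,\tau+\Upsilon]}$; the analogous statement holds for $u$ via $\Phi_{\tau,\infty}$. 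Since $[0,T]$ splits into finitely many such intervals and the conclusion at $t=\Upsilon$ supplies the input for the next subinterval, it suffices to establish the result on $[0,\Upsilon]$.

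On $[0,\Upsilon]$ I define the Picard iterates $v_n^{(0)}:=u$ and $v_n^{(k+1)}:=\Phi_{0,n}(v_n^{(k)})$ for $k\ge 0$; each iterate lies in $\x^+_{0,\Upsilon}(r)$, and since $u=\Phi_{0,\infty}(u)$, Banach's fixed-point estimate yields
\[
\|v_n^{(k)}-u_n\|_{0,\Upsilon}\le\frac{\alpha^k}{1-\alpha}\,\|u-\Phi_{0,n}(u)\|_{0,\Upsilon}\le\frac{2\varrho\,\alpha^k}{1-\alpha},
\]
uniformly in $n$. The key claim is that, for each fixed $k$, $v_n^{(k)}\to u$ locally uniformly on $\X\times[0,\Upsilon]$ as $n\to\infty$; I prove this by induction on $k$. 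The case $k=0$ is trivial. For the inductive step, expanding the difference $v_n^{(k+1)}-u=\Phi_{0,n}(v_n^{(k)})-\Phi_{0,\infty}(u)$ via \eqref{eq:exist_uniq_BUC:Phi_v}--\eqref{eq:exist_uniq_BUC:B} and using $|e^{-a}-e^{-b}|\le|a-b|$ on the $(B\,\cdot\,)$-factors, I obtain, for every compact $K\subset\X$, an estimate of the form
\[
\sup_{(x,t)\in K\times[0,\Upsilon]}|v_n^{(k+1)}-u|\le\sup_{x\in K}|u_n(x,0)-u(x,0)|+C\int_0^{\Upsilon}\sup_{x\in K}\bigl[|Av_n^{(k)}-Au|+|Gv_n^{(k)}-Gu|\bigr](x,s)\,ds
\]
with $C=C(M,r,f_0,\ka,\Upsilon)$. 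The first term tends to zero by the hypothesis on initial data. By the inductive hypothesis $v_n^{(k)}(\cdot,s)\locun u(\cdot,s)$ for each fixed $s$, so the continuity of $A$ and $G$ in the locally uniform topology yields $\sup_{x\in K}|Av_n^{(k)}(\cdot,s)-Au(\cdot,s)|\to 0$ and the same for $G$; since the integrands are uniformly bounded in terms of $r,f_0,\kappa,\|G0\|_E$, dominated convergence closes the induction.

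Given $\eps>0$ and a compact $K$, I first choose $k$ with $\frac{2\varrho\,\alpha^k}{1-\alpha}<\eps/2$, and then $N$ such that the preceding bound gives $\sup_{K\times[0,\Upsilon]}|v_n^{(k)}-u|<\eps/2$ for all $n\ge N$; the triangle inequality then delivers $\sup_{K\times[0,\Upsilon]}|u_n-u|<\eps$, which is the required convergence on $[0,\Upsilon]$. Feeding the endpoint convergence $u_n(\cdot,\Upsilon)\locun u(\cdot,\Upsilon)$ into the same argument on $[\Upsilon,2\Upsilon]$ and iterating finitely many times covers $[0,T]$. The main obstacle --- and the reason for the Picard detour --- is the apparent circularity of a direct Duhamel--Gr\"onwall estimate for $u_n-u$: the right-hand side would involve $|Au_n-Au|$ and $|Gu_n-Gu|$, but the only continuity hypothesis available for the nonlocal operators $A,G$ demands the very locally uniform convergence $u_n\locun u$ one is trying to establish. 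Starting the iteration at $v_n^{(0)}:=u$ decouples the inductive step from $u_n$, so that the continuity hypothesis need only be invoked against $v_n^{(k)}\locun u$, which is available by induction.
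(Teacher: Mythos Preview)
Your proof is correct and follows essentially the same Picard-iteration/contraction approach as the paper: both split $[0,T]$ into subintervals on which the map $\Phi_{\tau}$ (resp.\ $\Phi_{\tau,n}$) is a uniform contraction, show that one application of the map preserves locally uniform convergence, and then combine the resulting iterate-convergence with the uniform contraction rate via the triangle inequality. The only cosmetic differences are that you obtain a uniform time step through an a~priori Gr\"onwall bound and start the iteration at $v_n^{(0)}=u$, which collapses the paper's three-term triangle inequality to two terms.
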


\begin{proof}
By the proof of Theorem~\ref{thm:exist_uniq_BUC}, there exist $0=\tau_0<\tau_1<\ldots<\tau_N= T$ and $\varrho=r_0\leq r_1\leq\ldots\leq r_N=:r$, such that the following holds.
Let, for any 
$\tau=\tau_k$, $\Upsilon=\tau_{k+1}$, $0\leq k\leq N-1$, the mapping $\Phi_\tau$ be defined by \eqref{eq:exist_uniq_BUC:Phi_v} for $t\in[\tau,\Upsilon]$, with $u_\tau(x)=u(x,\tau)$, $x\in\X$; and, for each $n\in\N$, we set
	\[
			(\Phi_{\tau,n} v)(x,t):=(Bv)(x,\tau,t)u_{\tau,n}(x)+\int_\tau^t(Bv)(x,s,t)(A  v)(x,s)\,ds, 
	\]
	where $u_{\tau,n}(x)=u_n(x,\tau)$, $x\in\X$.
	Then $v\in \x_{\tau,\Upsilon}^+(r_{k+1})$, $\{u_\tau,u_{\tau,n}:n\in\N\}\subset E_{r_k}$  implies $\{\Phi_\tau v, \Phi_{\tau,n}v:n\in\N\}\subset \x_{\tau,\Upsilon}^+(r_{k+1})$, (cf. \eqref{eq:defofballinxtplus}).

	Prove that if, for some $\{w,w_n:n\in\N\}\subset \x_{\tau,\Upsilon}^+(r_{k+1})$, we have that $w_n(\cdot,t){\locun}w(\cdot,t)$, $n\to\infty$, uniformly in $t\in[\tau,\Upsilon]$, then 
\begin{equation}\label{eq:uniconv}
	\Phi_{\tau,n} w_n(\cdot,t)\locun \Phi_{\tau} w(\cdot,t), \quad n\to\infty,
\end{equation}
uniformly in $t\in[\tau,\Upsilon]$. Indeed, applying the inequalities,
\[
|e^{-a}-e^{-b}|\leq |a-b|, 
\qquad 
\bigl|pe^{-a}-qe^{-b}\bigr|\leq |p-q|+q|a-b|,
\]
for $a,b,p,q\geq0$, we get, for any bounded $\La\subset\X$,
\begin{align*}
	&\quad\ \1_\La(x)\bigl\lvert (\Phi_{\tau,n} w_n)(x,t) - (\Phi_{\tau} w)(x,t)\bigr\rvert \\
		&\leq  \1_\La(x)\bigl\lvert (\Phi_{\tau,n} w_n)(x,t) - (\Phi_{\tau,n} w)(x,t)\bigr\rvert + \1_\La(x)\bigl\lvert (\Phi_{\tau,n} w)(x,t) - (\Phi_{\tau} w)(x,t)\bigr\rvert \\
		&\leq \1_\La(x) \bigl\vert u_{\tau,n}(x) - u_\tau(x) \bigr\vert	+ r_k \int_\tau^t \1_\La(x)\bigl\lvert (Gw_n)(x,p)-(Gw)(x,p)\bigr\rvert\,dp \\
	&\quad+\int_\tau^t \1_\La(x)\bigl\lvert (A  w_n)(x,s)-(A  w)(x,s)\bigr\rvert\,ds
\\&\quad+\int_\tau^t \1_\La(x)\bigl\lvert (A  w)(x,s)\bigr\rvert\,\int_s^t \bigl\lvert (Gw_n)(x,p)-(Gw)(x,p)\bigr\rvert\,dp\, ds\\
	&\leq \bigl\lVert \1_\La\big(u_{\tau,n} - u_\tau\big) \bigr\rVert_{E} + r_k \int_\tau^\Upsilon \bigl\lVert \1_\La\big((Gw_n)(\cdot,p)-(Gw)(\cdot,p)\big) \bigr\rVert_{E}dp\\ 
	&\quad +\int_\tau^\Upsilon \bigl\lVert \1_\La \big((A  w_n)(\cdot,s)-(A  w)(\cdot,s)\big) \bigr\rVert_{E}  \,ds
\\&\quad+ (\|A  (0)\|_E+\ka r) \int_\tau^\Upsilon \int_s^\Upsilon  \bigl\lVert  \1_\La \big( (Gw_n)(\cdot,p)-(Gw)(\cdot,p)\big) \bigr\rVert_E\,dp\, ds.
\end{align*}
Hence \eqref{eq:uniconv} holds. 
Iterating this scheme, one gets that, for each $m\in\N$, $v\in \x_{\tau,\Upsilon}^+(r_{k+1})$, 
\begin{equation}\label{eq:convforeachm}
	(\Phi_{\tau,n})^m v(\cdot,t)\locun (\Phi_{\tau})^m v, \quad n\to\infty,
\end{equation}
uniformly in $t\in[\tau,\Upsilon]$. Therefore, for any bounded $\La\subset\X$,
\begin{align*}
&\quad\ \bigl\lvert \1_\La(x)(u_n(x,t)-u(x,t))\bigr\rvert\\
&\leq \bigl\lvert \1_\La(x)\bigl(u_n(x,t)-(\Phi_{\tau,n})^m v(x,t)\bigr)\bigr\rvert
+ \bigl\lvert \1_\La(x)\bigl((\Phi_{\tau,n})^m v(x,t)-(\Phi_{\tau})^m v(x,t)\bigr)\bigr\rvert
\\&\quad+ \bigl\lvert \1_\La(x)\bigl(u(x,t)-(\Phi_{\tau})^m v(x,t)\bigr)\bigr\rvert
\\&\leq \bigl\lVert u_n - (\Phi_{\tau,n})^m v\|_{\tau,\Upsilon}
+\sup_{t\in[\tau,\Upsilon]}\bigl\lVert \1_\La\bigl((\Phi_{\tau,n})^m v(\cdot,t)-(\Phi_{\tau})^m v(\cdot,t)\bigr)\bigr\rVert_{E}
\\&\quad+ \bigl\lVert u - (\Phi_{\tau})^m v\|_{\tau,\Upsilon},
\end{align*}
for any $m\in\N$. Passing $m$ to $\infty$, one gets then the statement by \eqref{eq:convforeachm}.
	\end{proof}

\section{Comparison principle}\label{sec:comparison_pr}
The comparison principle is a standard tool in studying parabolic- and elliptic-type equations, see e.g. \cite{GGIS1991,CIL1992}. For instance, it allows to estimate an unknown solution, constructing explicit sub- and super-solutions \cite{AW1975,Aro1977,AW1978}. See also \cite{Sch1980,Cov2007,CDM2008} for comparison results and its applications in studying traveling waves for non-local equations.   
	To the best of our knowledge, the first detailed proof of the comparison principle for the parabolic equation in the case of nonlocal diffusion \eqref{eq:nonlocdif} in \eqref{eq:RDform}, was done by Yagisita~\cite{Yag2009} in the case of globally Lipschitz KPP-type reaction $Fu=f(u)$  (see also \cite[Lemma D.1]{LPL2005}). 
	The comparison principle is often used in other articles without any reference on the proof.
	Also we do not know any result on the comparison principle in the case of a non-local reaction.

We will get in Theorem~\ref{thm:compar_pr} the comparison principle related to an abstract evolution equation 
\[
\dfrac{\partial u}{\partial t}(x,t)=(Hu)(x,t),
\]
where $H:E\to E$ is locally Lipschitz continuous and such that the operator $H+p$ is monotone on $E$ for some $p>0$. Here and below we use the same notation for a constant and for the operator of multiplication by this constant  in the space $E$.

\begin{rem}\label{rem:neccond}
For the equation \eqref{eq:basic}, the monotonicity of $H+p$ has the form \eqref{assum:sufficient_for_comparison}.
Note that in the case of a local operator $G$ (cf. Example \ref{ex:local_reaction}), there exists $p>0$, such that \eqref{assum:Glipschitz} implies \eqref{assum:sufficient_for_comparison}, and hence the comparison indeed does not require any additional assumptions. However, for a nonlocal $G$ the assumption \eqref{assum:sufficient_for_comparison} is restrictive. For instance, in Example~\ref{ex:Gu_equals_aminus_conv_u}, \eqref{assum:sufficient_for_comparison} is necessary and sufficient (and  hence optimal) condition to ensure the comparison principle in $E_\theta^+$, see~\cite[Remark~3.6]{FKT2015}.
\end{rem}	
	
We introduce some additional notations. For any $v\in E$, $r\in\R$, we set 
\[
(v\wedge r)(x) := \min\{v(x),r\}, \quad (v\vee r)(x) := \max\{v(x),r\}.
\]
Let $H:E\to E$. For any $u\in\Y_T$, cf. \eqref{eq:defYT}, and $r>0$, we define
\begin{equation}\label{Foper}
  (\mathcal{F}_r u)(x,t):=\dfrac{\partial u}{\partial t}(x,t)- H(0 \vee u\wedge r)(x,t),\qquad t\in(0,T],\ x\in\X.
\end{equation}
Here and below we consider the left derivative at $t=T$ only. 
\begin{thm}\label{thm:compar_pr}
	Let $H:E\to E$ and $h,p,r>0$ be such that $H$ is Lipschitz continuous on $E_r^+$ with the Lipschitz constant $h >0$, and $H+p$ is monotone on $E_r^+$, namely,
	\begin{align}
		\|H w-Hv\|_E &\leq h \|w-v\|_E,&& w,v\in E_r^+, \label{eq:H_lip}\\
  	Hv +pv &\leq Hw+pw,&&v\leq w,\ w,v\in E_r^+. \label{compare}
	\end{align}
  Let $T>0$ be fixed. Suppose that  $u_{1},u_{2}\in\Y_T$ are such that 
    \begin{align}
    	0\leq u_1(x,t),&\quad u_2(x,t) \leq r, && (x,t)\in\X \times(0,T], \label{eq:comparison_b}\\
     (\mathcal{F}_r u_{1})(x,t) &\leq(\mathcal{F}_r u_{2})(x,t), && (x,t)\in\X \times(0,T],\label{eq:max_pr_BUC:ineqFcal}\\
      u_{1}(x,0)&\leq u_{2}(x,0), && x\in\X. \label{eq:max_pr_BUC:ineq}
    \end{align}
  	Then $u_{1}(x,t)\leq u_{2}(x,t)$ for all $(x,t)\in\X \times[0,T]$. 
\end{thm}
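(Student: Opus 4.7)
The plan is to set $w:=u_1-u_2$, derive a pointwise inequality for $w$ that closes up into a Gronwall-type inequality for $\psi(t):=\|w^+(\cdot,t)\|_E$ with $w^+:=w\vee 0$, and then conclude $\psi\equiv 0$ from $\psi(0)=0$.

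First, since $0\le u_i(x,t)\le r$ by \eqref{eq:comparison_b}, the truncation in \eqref{Foper} is trivial: $0\vee u_i\wedge r=u_i$ for $i=1,2$. Hence \eqref{eq:max_pr_BUC:ineqFcal} simplifies to $\partial_t w\le Hu_1-Hu_2$ on $\X\times(0,T]$. Setting $G:=H+p$, which is order-preserving on $E_r^+$ by \eqref{compare} and Lipschitz with constant $h+p$ there by \eqref{eq:H_lip}, the inequality becomes $\partial_t w+pw\le Gu_1-Gu_2$.

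The crux is to bound the right-hand side pointwise by a quantity depending only on $w^+$. Since $u_1\wedge u_2$ and $u_1\vee u_2$ lie in $E_r^+$ (the lattice operations preserve $\Cb$, $\Buc$ and $L^\infty(\X)$), I would split
\[
Gu_1-Gu_2=\bigl[Gu_1-G(u_1\wedge u_2)\bigr]+\bigl[G(u_1\wedge u_2)-Gu_2\bigr].
\]
Monotonicity of $G$ kills the second bracket because $u_1\wedge u_2\le u_2$. For the first bracket, the pointwise identity $u_1-u_1\wedge u_2=w^+$ gives $\|u_1-u_1\wedge u_2\|_E=\|w^+(\cdot,t)\|_E$, and Lipschitz continuity yields, at every $x\in\X$,
\[
(Gu_1)(x,t)-G(u_1\wedge u_2)(x,t)\le (h+p)\,\|w^+(\cdot,t)\|_E.
\]
Combined, $\partial_t w(x,t)+pw(x,t)\le(h+p)\,\|w^+(\cdot,t)\|_E$ on $\X\times(0,T]$.

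Applying the integrating factor $e^{pt}$ turns the left-hand side into $\partial_t(e^{pt}w)$; integrating from $\varepsilon$ to $t$, letting $\varepsilon\downarrow 0$ by continuity of $u_i$ at $0$, and using $w(\cdot,0)\le 0$ from \eqref{eq:max_pr_BUC:ineq}, I obtain
\[
w(x,t)\le(h+p)\int_0^t e^{-p(t-s)}\|w^+(\cdot,s)\|_E\,ds.
\]
The right-hand side is non-negative and therefore also dominates $w^+(x,t)$. Taking the $E$-norm in $x$ and noting that $\psi$ is continuous on $[0,T]$ (the map $f\mapsto f^+$ is a contraction on $E$), Gronwall's lemma applied to $\psi(t)\le(h+p)\int_0^t\psi(s)\,ds$ with $\psi(0)=0$ forces $\psi\equiv 0$, i.e. $u_1\le u_2$. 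The only delicate step is the pointwise splitting via $u_1\wedge u_2$: it is exactly the device that lets monotonicity handle one ``half'' of $Gu_1-Gu_2$ and Lipschitz continuity the other, producing an estimate controlled by $w^+$ rather than by $|w|$; the remainder is a standard integrating-factor/Gronwall routine.
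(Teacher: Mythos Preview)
Your argument is correct and is genuinely different from the paper's proof. The paper does not use the lattice splitting via $u_1\wedge u_2$; instead it introduces $v=e^{Kt}(u_2-u_1)$, writes the equation $\partial_t v=\Theta(t,v)$ for a suitable nonlinear map $\Theta$, and then sets up an auxiliary fixed-point problem $\widetilde v=\Psi\widetilde v$ with $\Psi w=v(\cdot,0)+\int_0^t\max\{\Theta(s,w),0\}\,ds$. Showing that $\Psi$ is a contraction on a short time interval and that its unique fixed point coincides with $v$ forces $v\ge 0$; the argument is then iterated over subintervals of $[0,T]$. Your route is considerably shorter: the decomposition
\[
Gu_1-Gu_2=\bigl(Gu_1-G(u_1\wedge u_2)\bigr)+\bigl(G(u_1\wedge u_2)-Gu_2\bigr)
\]
is precisely what lets monotonicity discard one term and Lipschitz continuity bound the other by $(h+p)\|w^+\|_E$, so that a single application of the integrating factor plus Gronwall closes the loop on the whole interval at once. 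The paper's fixed-point machinery buys nothing extra here; your argument is the more transparent one, and it makes explicit why only $\|w^+\|_E$ (rather than $\|w\|_E$) needs to be controlled.
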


\begin{proof}
Define, cf. \eqref{eq:max_pr_BUC:ineqFcal}, the following function
\begin{equation}
	\phi_r(x,t):=(\mathcal{F}_r u_{2})(x,t)-(\mathcal{F}_r u_{1})(x,t)\geq0, \label{dif_pos}
\end{equation}
for $(x,t)\in\X \times[0,T]$. For a constant $K>0$, which will be specified later, consider the mapping
\begin{align}
	\Theta(t,w):&=Kw  +e^{Kt}\big( H\big(0\vee(e^{-Kt}w+u_1)\wedge r \big) - H(u_1\wedge r)\big)\notag \\&\quad + e^{Kt} \phi_r(x,t), \qquad w\in C_b([0,T]\to E). \label{mapF}
\end{align}
We have, for $w \geq 0$,
\[
	0 \leq u_1\wedge r \leq (e^{-Kt}w+u_1)\wedge r \leq r.
\]
Since, for any $x\geq y\geq0$, $z\geq0$,
\[
	0 \leq x\wedge z - y\wedge z \leq x-y,
\]
one has, by \eqref{compare}, \eqref{dif_pos}, that  $0\leq w \in C_b([0,T]\to E)$ yields
\[
	\Theta(t,w) \geq (K-p)w + e^{Kt} \phi_r(x,t) \geq0, 
\]
if only $K\geq p$ that we will assume in the following.

Next, applying \eqref{eq:H_lip} to \eqref{mapF}, we will get that ${w \in C_b([0,T]\to E)}$ implies, for all $t\in[0,T]$,
\[
	\|\Theta(t,w)\|_T \leq (K+h )\|w\|_T + e^{Kt}\|\phi_r\|_T.
\]
Therefore, since $u_1,u_2\in\Y_T$ implies, by \eqref{Foper}, \eqref{dif_pos}, that $\phi_r\in C_b([0,T]\to E)$, one gets that $\Theta(t,w)\in C_b([0,T]\to E)$.

Define also the function
\[
v(x,t):=e^{Kt}(u_{2}(x,t)-u_{1}(x,t)), \quad x\in\X, \ t\in[0,T].
\]
Clearly, $v\in\Y_T$, and it is straightforward to check that
\[
 \Theta (t,v(x,t))=\frac{\partial}{\partial t} v(x,t), \quad x\in\X, \ t\in(0,T].
\]
Therefore, $v$ solves the following integral equation in $E$:
\begin{equation}
\begin{cases}
\displaystyle v(x,t)=v(x,0)+\int_0^t \Theta(s,v(x,s))ds, & \quad (x,t)\in\X {\times}(0,T],\\[3mm]
v(x,0)=u_{2}(x,0)-u_{1}(x,0), & \quad x\in\X,
\end{cases}\label{eq:max_pr_BUC:u2-u1_lin}
\end{equation}
where $v(x,0)\geq0$ by \eqref{eq:max_pr_BUC:ineq}.

Consider also another integral equation in $E$:
\begin{gather}
	\widetilde{v}(x,t)=(\Psi \widetilde{v})(x,t)\label{eq:max_pr_BUC:pos_sol} \\ 
\shortintertext{where, for $w\in C_b([0,T]\to E)$,}
	(\Psi w)(x,t):=v(x,0)+\int_0^t\max\bigl\{\Theta(s,w(x,s)),0\bigr\}\,ds.\label{eq:max_pr_BUC:Psi}
\end{gather}
If we take $\widetilde{T}<T$ such that the following inequality holds
\[
	q_1 := 2r(K+h ) + e^{KT} \|\phi_r\|_T \leq \frac{r}{\widetilde{T}},
\]
then, $w\in \mathcal{X}_{\widetilde{T}}^+(2r)$  yields $\Psi w \in \mathcal{X}_{\widetilde{T}}^+(2r)$, where, cf. \eqref{eq:defofballinxtplus}, $\mathcal{X}_{\widetilde{T}}^+(2r):=\mathcal{X}_{0,\widetilde{T}}^+(2r)$.
Let $w_{1},w_{2}\in\mathcal{X}_{\widetilde{T}}(2r)$; by \eqref{eq:H_lip}, \eqref{mapF}, we have, for all $(t,x)\in[0,\widetilde{T}]\times\X$,
\[
	|\Theta(t,w_1)-\Theta(t,w_2)| \leq (K + h ) \|w_1-w_2\|_{\widetilde{T}} =: q_2 \|w_1-w_2\|_{\widetilde{T}}.  
\]
Therefore, using the elementary inequality $\lvert \max\{a,0\}-\max\{b,0\}\rvert\leq |a-b|$, $a,b\in\R$, we obtain from \eqref{eq:max_pr_BUC:Psi}, that
\begin{equation*}
	\|\Psi w_{1}-\Psi w_{2}\|_{\widetilde{T}}\leq q_2 \widetilde{T} \|w_2-w_1\|_{\widetilde{T}}.
\end{equation*}
Therefore, for $\widetilde{T}< \max\{ \frac{r}{2q_1}, \frac{1}{q_2}\}$, $\Psi$ is a contraction on $\mathcal{X}_{\widetilde{T}}^+(2r)$. 
Thus, there exists a unique solution $\widetilde{v}$ to \eqref{eq:max_pr_BUC:pos_sol} on $[0,\widetilde{T}]$.
By \eqref{eq:max_pr_BUC:pos_sol}, \eqref{eq:max_pr_BUC:Psi},
\begin{equation}\label{result}
	\widetilde{v}(x,t)\ge v(x,0)\ge0,\qquad x\in\X,\ t\in[0,\widetilde{T}].
\end{equation}
By the considerations above,  $0\leq w\in C_b([0,T]\to E)$ yields $0 \leq \Theta(s, w(x,s))\in C_b([0,T]\to E)$.
Hence $\widetilde{v}$ is a solution to \eqref{eq:max_pr_BUC:u2-u1_lin} on $[0,\widetilde{T}]$ as well. Namely,
\[
\widetilde{v}(x,t)=v(x,0)+\int_0^t \Theta(s,\widetilde{v}(x,s))\,ds=:\Xi(\widetilde{v})(x,t).
\]
By the same arguments as the above, $\Xi$ is a contraction on $\mathcal{X}_{\widetilde{T}}(2r)$, for the same $\widetilde{T}$.
We deduce that $v=\widetilde{v}$ on $\X \times[0,\widetilde{T}]$.
Then, by \eqref{result}, $v(x,t)\geq0$ on $\X \times[0,\widetilde{T}]$, that yields 
\[
	0\leq u_1(x,\widetilde{T}) \leq u_2(x,\widetilde{T})\leq r,\ x\in \X.
\]
In the same way, the proof can be extended on $[\widetilde{T},2\widetilde{T}], [2\widetilde{T},3\widetilde{T}]$, \ldots, keeping the same $q_1$ and $q_2$, and, therefore, on the whole $[0,T]$.
Then $v(x,t)\geq 0$ on $\X\times[0,T]$, that yields the statement.
\end{proof}

Clearly, Theorem~\ref{thm:compar_pr} in the case $r=\theta$, $Hv = \ka a*v - vGv - mv$, $v\in E$, implies the first statement of Theorem~\ref{thm:compar_pr_basic}. The following Proposition yields the second statement of  Theorem~\ref{thm:compar_pr_basic}.
\begin{prop}\label{prop:solution_stays_in_tube}
	Let \eqref{assum:kappa>m}--\eqref{assum:sufficient_for_comparison} hold and $0\leq u_0 \leq \theta$.
	Then there exists a unique (classical) solution $u$ to \eqref{eq:basic}, and $0\leq u(x,t)\leq \theta$ for any $x\in\X$, $t\geq0$.
\end{prop}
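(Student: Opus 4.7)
Existence and uniqueness of the classical solution $u\in\Y_\infty$ follow from Theorem~\ref{thm:exist_uniq_basic}; the task is to show $u(\cdot,t)\in E_\theta^+$ for every $t\geq 0$. The obstacle is that applying Theorem~\ref{thm:compar_pr} with $r=\theta$ directly to $u$ and the constants $0,\theta$ is circular, since its hypotheses require $0\leq u\leq\theta$. My plan is to replace the right-hand side of \eqref{eq:basic} by a truncated version whose Lipschitz and quasi-monotonicity properties extend globally on $E$, show that the truncated solution lies in the strip $[0,\theta]$, and then identify it with $u$ by uniqueness.

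Write $Hv:=\ka(a*v)-mv-v\,Gv$ and let $\phi(v):=(0\vee v)\wedge\theta\in E_\theta^+$ denote the pointwise truncation, which is monotone and $1$-Lipschitz on $E$; set $\tilde Hv:=H(\phi(v))$. By \eqref{assum:Gpositive}--\eqref{assum:Glipschitz} the operator $H$ is Lipschitz on $E_\theta^+$, so $\tilde H$ is globally Lipschitz on $E$. Applying \eqref{assum:sufficient_for_comparison} to $\phi(v)\leq\phi(w)\in E_\theta^+$ together with $0\leq\phi(w)-\phi(v)\leq w-v$ for $v\leq w$ shows that $\tilde H+p$ is monotone on all of $E$. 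Note also that $H0=0$ and $H\theta=\ka\theta-m\theta-\theta\beta=0$, using \eqref{assum:kappa>m} and $G\theta=\beta$, hence $\tilde H0=\tilde H\theta=0$. Since $\tilde H$ is globally Lipschitz, the arguments in the proof of Theorem~\ref{thm:exist_uniq_BUC} apply (and simplify, as the Lipschitz constant is uniform in the radius) and produce, for every $T>0$, a unique $\bar u\in\Y_T$ solving $\partial_t\bar u=\tilde H\bar u$ with $\bar u(\cdot,0)=u_0$, bounded on $[0,T]$ by some $R\geq\theta$.

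The lower bound $\bar u\geq 0$ I would prove pointwise. If $\bar u(x_0,t_0)<0$ for some $(x_0,t_0)$, let $t_1:=\sup\{t\in[0,t_0]:\bar u(x_0,t)\geq 0\}$; then $t_1<t_0$, $\bar u(x_0,t_1)=0$ by continuity, and $\bar u(x_0,t)<0$ on $(t_1,t_0]$, so $\phi(\bar u)(x_0,t)=0$ there. Consequently
\[
	\partial_t\bar u(x_0,t)=(\tilde H\bar u)(x_0,t)=\ka\bigl(a*\phi(\bar u)\bigr)(x_0,t)\geq 0,\qquad t\in(t_1,t_0],
\]
since $\phi(\bar u)\geq 0$ pointwise everywhere; integration gives $\bar u(x_0,t_0)\geq 0$, a contradiction.

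For the upper bound I would apply Theorem~\ref{thm:compar_pr} to $\tilde H$ with $r=R$, $u_1=\bar u$, $u_2\equiv\theta$: the hypotheses \eqref{eq:H_lip}--\eqref{compare} follow from the global properties of $\tilde H$; both functions lie in $[0,R]$; on $[0,R]$ the truncation $0\vee v\wedge R$ appearing in $\mathcal F_R$ acts as the identity, so $\mathcal F_R\bar u=\partial_t\bar u-\tilde H\bar u=0=-\tilde H\theta=\mathcal F_R\theta$; and $\bar u(\cdot,0)=u_0\leq\theta$. Theorem~\ref{thm:compar_pr} then gives $\bar u\leq\theta$, so $\bar u\in E_\theta^+$ on $[0,T]$, forcing $\phi(\bar u)=\bar u$ and $\tilde H\bar u=H\bar u$; hence $\bar u$ is a classical solution to \eqref{eq:basic}, and the uniqueness part of Theorem~\ref{thm:exist_uniq_basic} yields $u=\bar u$. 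The main obstacle, resolved by replacing $H$ with its truncation $\tilde H$, is the circular interplay between the hypothesis and the conclusion of Theorem~\ref{thm:compar_pr}; once the strip is enforced externally via $\phi$, the lower bound is immediate from the structure of the truncated reaction (which vanishes pointwise where $\bar u\leq 0$), and the upper bound comes from the global quasi-monotonicity of $\tilde H$ inherited from \eqref{assum:sufficient_for_comparison}.
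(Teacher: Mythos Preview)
Your proof is correct and follows essentially the same strategy as the paper: extend $H$ to all of $E$ by the truncation $\tilde H=H\circ\phi$, solve the resulting globally Lipschitz problem, show the truncated solution stays in $[0,\theta]$, and identify it with $u$ by uniqueness. The only difference is in how the two bounds are obtained: the paper applies Theorem~\ref{thm:compar_pr} twice (first with $u_1\equiv 0$, $u_2=\tilde u$, $r=\lVert\tilde u\rVert_T$ to get $\tilde u\geq 0$, then with $u_1=\tilde u$, $u_2\equiv\theta$, $r=\theta$ to get $\tilde u\leq\theta$, organised as a contradiction argument), whereas you get the lower bound by a direct pointwise argument exploiting that at any point where $\bar u<0$ the truncated right-hand side reduces to the nonnegative term $\ka\,(a*\phi(\bar u))$, and then invoke Theorem~\ref{thm:compar_pr} only once for the upper bound. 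This is a harmless and arguably cleaner variation; just note that for $E=L^\infty(\X)$ your pointwise argument should be read for a.a.\ $x_0$.
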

\begin{proof}
	We set $Hv := \ka a*v - vGv - mv$ for $v\in E^+_\theta$, and $ Hv := H(0\vee v \wedge \theta)$ for $v\in E\setminus E^+_\theta$. Prove, first, that $H$ is (globally) Lipschitz continuous on $E$. 
	Indeed, for any $x,y\in\R$, 
	\[
	\lvert x\wedge \theta -y\wedge \theta\rvert=
	\frac{1}{2}\bigl\lvert (x+\theta-|x-\theta|)-(y+\theta-|y-\theta|)\bigr\rvert
	\leq |x-y|
	\]
	and, similarly, $|x\vee 0-y\vee 0|\leq |x-y|$.
Therefore, denoting $v_\theta:=0\vee v \wedge \theta$ for $v\in E$, one gets that $\lVert v_\theta-w_\theta\rVert_E \leq \lVert v-w\rVert_E$ for $w,v\in E$, and hence 
\begin{align*}
	\|Hw-Hv\|_E &\leq (\ka  +m +\sup_{v\in E} \|G(0\vee v \wedge \theta)\|_E + \theta \lt  ) \|w-v\|_E\\&=  (2\ka + \theta \lt) \|w-v\|_E.
\end{align*}
	As a result, for any $T>0$, the initial value problem
	\[
		\frac{\partial \widetilde{u}}{\partial t}(x,t) = (H\widetilde{u})(x,t),\quad \widetilde{u}(x,0)=u_0(x), \quad x\in \X,\ t\in (0,T],
	\]
	has a unique classical solution $\widetilde{u}$, i.e., for $\mathcal{F}_\theta$ defined by \eqref{Foper}, $\mathcal{F}_\theta \widetilde{u}\equiv 0$.
	
	Note that, for any $r\geq\theta$, $v\in E_r^+$ implies $Hv=H(v\wedge \theta)$. In particular, applying this for $v=0\vee \widetilde{u}\wedge r$, one gets
	\begin{equation}\label{eq:evidentbut}
	\mathcal{F}_r \widetilde{u}=\mathcal{F}_\theta \widetilde{u}\equiv 0.
	\end{equation}
	Moreover, by \eqref{assum:sufficient_for_comparison}, there exists $p\geq0$, such that, for any $r\geq\theta$, $v,w\in E_r^+$, $v \leq w$,
	\[
		p(w-v) + Hw-Hv \geq p(w\wedge\theta-v\wedge\theta)+ H(w\wedge \theta) - H(v\wedge \theta) \geq 0.
	\]
	Assume that $\|\widetilde{u}\|_T>\theta$. Then, by the arguments above and \eqref{eq:evidentbut}, we may apply Theorem~\ref{thm:compar_pr} for the case $r=\|\widetilde{u}\|_T$, $u_1\equiv 0$, $u_2= \widetilde{u}$ (note that, evidently, $\mathcal{F}_r0=0$). It yields $\widetilde{u}\geq 0$.
	Next, similarly, we can apply Theorem~\ref{thm:compar_pr} for the case $r=\theta$, $u_1 =\widetilde{u}$, $u_2\equiv \theta$ (since $\mathcal{F}_\theta \theta=0$). It~implies then that $\widetilde{u}\leq \theta$, that contradicts the assumption, therefore, $\|\widetilde{u}\|_T\leq \theta$. Apply once more Theorem~\ref{thm:compar_pr} for the case $r=\theta$, $u_1\equiv 0$, $u_2= \widetilde{u}$, then $\widetilde{u}\geq 0$.
	As a result, the function $\widetilde{u}=0\vee \widetilde{u}\wedge \theta$ solves \eqref{eq:basic}.
	
	Choose an arbitrary extension of $G$ on $\{0\leq v\in E\}$ such that \eqref{eq:G_loc_lip} holds.
	By Theorem~\ref{thm:exist_uniq_basic}, there exists a unique classical solution $u$ to \eqref{eq:basic}.
	Hence $0\leq u=\widetilde{u}\leq \theta$.
	The proof is fulfilled.
\end{proof}

\section{\protect The hair-trigger effect: \normalfont proofs of Theorems~\ref{thm:ht1myes}, \ref{thm:ht1mno}}\label{sec:hair_trigger}
 
We are going to prove our main Theorems~\ref{thm:ht1myes} and \ref{thm:ht1mno}.
The Section is organized as follows. First, in Propositions~\ref{prop:u_in_BUC}--\ref{prop:u_gr_0}, we show some properties of solutions to \eqref{eq:basic} with continuous initial conditions. Note that, by existence and uniqueness Theorem~\ref{thm:exist_uniq_basic}, the solutions will be also continuous and, moreover, by comparison Theorem~\ref{thm:compar_pr_basic},
any solution in $E=L^\infty(\X)$ can be estimated from above and below by continuous ones taking the corresponding estimates for the initial condition $u_0\not\equiv0$, cf.~Remark~\ref{rem:notequiv}. 

Next, we describe general Weinberger's scheme \cite{Wei1982a} for a dynamical system in discrete time in the context of the equation \eqref{eq:basic} (Propositions~\ref{prop:Q_def} and~\ref{prop:monot_along_vector_sol}, Lemma~\ref{lem:conv_to_theta-1}), and prove the corresponding result for continuous time (Proposition~\ref{prop:hairtrigger_general}). The latter result is proved under additional assumptions inherited by general Weinberger's approach: a technical assumption \eqref{hyp:descrete_front_nonempty} on the dynamical system and an assumption \eqref{eq:init_cond_is_large} on the initial condition $u_0$, which cannot be verified for particular examples of $u_0$, cf.~Remark~\ref{rem:rsigmaisbad}. 

Then, in Proposition~\ref{prop:suff_H1}, by using Lemma~\ref{lem:average_of_jump_gen_is_zero}, we prove that the technical assumption \eqref{hyp:descrete_front_nonempty} holds. To get rid of restrictions on initial condition $u_0$, one needs more machinery. Namely, we find in Proposition~\ref{prop:subsolutiontolinear} a useful sub-solution to the linearization of the equation \eqref{eq:basic} around the zero solution. Next, we show that (being multiplied on a small enough constant) it will be a sub-solution to the nonlinear equation \eqref{eq:basic} as well (Proposition~\ref{prop:subsolution}) and, in Proposition~\ref{prop:useBrandle}, we show that a solution to \eqref{eq:basic} becomes larger than the sub-solution after a big enough time. As a result, one can show that Weinberger's  assumption \eqref{eq:init_cond_is_large}  on the initial condition is fulfilled (just starting from a moment of time $t_0>0$ rather than from $0$). Finally, in the proof of Theorem~\ref{thm:ht1mno}, we show how to deal with the kernels without the first moment (where the assumption~\eqref{assum:first_moment_finite} fails).

\begin{prop}\label{prop:u_in_BUC}
  Let $0\leq u_0\in \Buc$, and suppose that $u$ is the corresponding classical solution to \eqref{eq:basic}. 
  Suppose also, that there exists $C>0$, such that
  \[
     0 \leq u(x,t) \leq C,\quad x\in\X ,\ t\ge0,
  \]
  and $g_C:=\sup\limits_{v\in E_C^+} |Gv|<\infty$. 
  Then $u\in C_{ub} (\X\times\R_+)$ and, moreover, $\|u(\cdot,t)\|_E\in C_{ub}(\R_+)$. In particular, these inclusions hold if we assume \eqref{assum:kappa>m}--\eqref{assum:sufficient_for_comparison}.
\end{prop}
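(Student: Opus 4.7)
The plan is to prove joint uniform continuity of $u$ on $\X \times \R_+$ by combining uniform Lipschitz continuity in $t$ with uniform continuity in $x$ that holds uniformly in $t \geq 0$. The auxiliary claim $\|u(\cdot, t)\|_E \in C_{ub}(\R_+)$ will follow from the $t$-Lipschitz bound via the reverse triangle inequality. For the Lipschitz part: since $\|a*u(\cdot,t)\|_E \leq \|a\|_{L^1(\X)} \|u(\cdot,t)\|_E \leq C$ and $|Gu| \leq g_C$, the right-hand side of \eqref{eq:basic} is pointwise bounded by $L := (\ka + m + g_C) C$, whence $|u(x,t) - u(x,t')| \leq L|t-t'|$ uniformly in $x$.

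For uniform continuity in $x$, I would pass to the Duhamel form
\[u(x, t) = e^{-mt} u_0(x) + \int_0^t e^{-m(t-s)}\bigl[\ka (a*u)(x, s) - u(x, s)(Gu)(x, s)\bigr]\,ds,\]
fix $h \in \X$, and estimate $\eta(h, t) := \|T_{-h} u(\cdot, t) - u(\cdot, t)\|_E$. Term by term: the initial-condition contribution is bounded by $\omega_{u_0}(|h|)$, the modulus of continuity of $u_0 \in \Buc$; the convolution contribution, using $T_{-h}(a*u) = (T_{-h} a)*u$, is bounded by $(\ka C/m)\|T_{-h} a - a\|_{L^1(\X)}$, which tends to $0$ as $|h| \to 0$ uniformly in $t$ by continuity of translations in $L^1$. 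For the nonlinear term the product identity
\[T_{-h}(u \cdot Gu) - u \cdot Gu = (T_{-h}u - u)(T_{-h}Gu) + u(T_{-h}Gu - Gu),\]
combined with the Lipschitz property \eqref{assum:Glipschitz} of $G$ and, in the ``in particular'' setting, translation invariance \eqref{assum:G_commute_T} (so that $T_{-h} G v = G T_{-h} v$), gives $\|T_{-h}(uGu)(\cdot, s) - (uGu)(\cdot, s)\|_E \leq (g_C + C l_\theta)\,\eta(h, s)$.

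Putting these together yields the integral inequality
\[\eta(h, t) \leq \alpha(h) + b \int_0^t e^{-m(t-s)} \eta(h, s)\,ds, \qquad b := g_C + C l_\theta,\]
with $\alpha(h) \to 0$ as $|h| \to 0$. The \textbf{main obstacle} is converting this into a bound uniform in $t \geq 0$: a naive Gronwall produces exponential growth in $t$, which is incompatible with the claimed uniform continuity. When $b < m$, taking the supremum in $t$ directly yields $\sup_t \eta(h, t) \leq \alpha(h)\,m/(m - b)$, which tends to $0$ as $|h| \to 0$ and finishes step~(b). Otherwise, I would iterate the mild fixed-point contraction of Theorem~\ref{thm:exist_uniq_BUC} on consecutive short time intervals, propagating control of $\eta$ in the spirit of Lemma~\ref{lem:recurrence_sequence}, to obtain a uniform-in-$t$ modulus of continuity.
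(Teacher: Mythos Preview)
The paper's own argument is far shorter than yours. From the integral form of \eqref{eq:basic} it simply reads off the uniform time–Lipschitz bound (the printed $(y,\tau)$ appears to be a typo for $(x,\tau)$), and from this deduces both inclusions; there is no separate analysis of the spatial modulus of continuity. Your Lipschitz-in-$t$ step and the reverse-triangle argument for $\|u(\cdot,t)\|_E\in C_{ub}(\R_+)$ coincide with the paper.

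Where you go beyond the paper---controlling $\eta(h,t)=\|T_{-h}u(\cdot,t)-u(\cdot,t)\|_E$ via Duhamel and Gronwall---there are two genuine gaps.

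\emph{Unassumed hypotheses.} To estimate the nonlinear term you use $T_{-h}Gu=G\,T_{-h}u$, i.e.\ assumption \eqref{assum:G_commute_T}. This is \emph{not} among the hypotheses of the proposition: the general statement assumes only boundedness of $u$ and $g_C<\infty$, and the ``in particular'' clause assumes only \eqref{assum:kappa>m}--\eqref{assum:sufficient_for_comparison}. You also invoke the Lipschitz constant $l_\theta$ from \eqref{assum:Glipschitz}; in the general part of the proposition no Lipschitz property of $G$ is assumed at all.

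\emph{No uniform-in-$t$ closure.} Even granting \eqref{assum:G_commute_T} and \eqref{assum:Glipschitz}, your integral inequality
\[
\eta(h,t)\le \alpha(h)+b\int_0^t e^{-m(t-s)}\eta(h,s)\,ds,\qquad b=g_C+Cl_\theta,
\]
yields via Gronwall only $\eta(h,t)\lesssim \alpha(h)\,e^{(b-m)_+ t}$, and nothing in the hypotheses forces $b<m$ (already for $Gu=u$, $C=\theta=\beta$, one has $b=2\beta$, while $m=\ka-\beta$ can be made arbitrarily small). Your fallback---iterating the contraction of Theorem~\ref{thm:exist_uniq_BUC} over the intervals $[\Upsilon_j,\Upsilon_{j+1}]$ ``in the spirit of Lemma~\ref{lem:recurrence_sequence}''---does not obviously produce a modulus uniform on all of $\R_+$: that machinery gives estimates with interval-dependent constants, and Lemma~\ref{lem:recurrence_sequence} concerns divergence of a series, not summability of accumulated errors.

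So your time-regularity argument matches the paper, but your spatial-regularity argument imports hypotheses the proposition does not provide and, even with them, the uniform-in-$t$ conclusion is not secured.
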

\begin{proof}
  Being classical solution to \eqref{eq:basic}, $u$ satisfies the integral equation
  \[
    u(x,t) = u_0(x) + \int _0^t\bigl(\ka(a*u)(x,s) - mu(x,s) - u(x,s) (Gu)(x,s)\bigr)\,ds.
  \]
  Hence for any $x,y\in\X$, $0\leq\tau< t$, one has
  \[
  	\bigl|u(x,t)-u(y,\tau)\bigr|\leq (2\ka C + 2mC +Cg_C)(t-\tau),
  \]
  that fulfills the proof of the first inclusion. Then, the second one follows from the inequality $\bigl\lvert \lVert u(\cdot,t)\rVert_E - \lVert u(\cdot,\tau)\rVert_E \bigr\rvert\leq \lVert u(\cdot,t)-u(\cdot,\tau)\rVert_E $, $t,\tau\in\R_+$.
  Finally, if the conditions \eqref{assum:kappa>m}--\eqref{assum:sufficient_for_comparison} hold, then, by Proposition~\ref{prop:solution_stays_in_tube}, one gets that the solution $u$ exists and satisfies the conditions above if only $C:=\theta$. Moreover, \eqref{assum:Glipschitz} implies that, for any $v\in E_\theta^+$,
  \[
  	\|G v \|_E\leq \|G0\|_E+\lt \|v\|_E\leq \|G0\|_E+\theta \lt<\infty,
  \]
  that fulfills the proof.
\end{proof}

The maximum principle is a `standard counterpart' of the comparison principle, see e.g. \cite{Cov2007}. We will use in the sequel that, under some additional assumptions, the solutions to \eqref{eq:basic} are strictly positive; this is a quite common feature of linear parabolic equations, however, in general, it may fail for nonlinear ones. Consider the corresponding statement.
\begin{prop}\label{prop:u_gr_0}
	Let $E=\Cb$. Let \eqref{assum:kappa>m}--\eqref{assum:a_nodeg} hold with $G:E\to E$, such that $Gl\not\equiv \beta$, for $l\in(0,\theta)$. (In particular, the latter holds, if we assume, additionally, \eqref{assum:G_commute_T}--\eqref{assum:G_increas_on_const}.)
	Let $u_0\in E_\theta^+$, $u_0\not\equiv\theta$, $u_0\not\equiv0$, be~the~initial condition to \eqref{eq:basic} and $u$ be the corresponding solution. Then
  \[
  	u(x,t)>\inf_{\substack{y\in\X\\ s>0}}u(y,s)\geq0, \qquad x\in\X, t>0.
  \]
\end{prop}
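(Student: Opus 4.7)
The plan is to combine the Duhamel (variation of parameters) representation of $u$ with the kernel non-degeneracy \eqref{assum:a_nodeg} to run a strong-maximum-principle-style argument. Non-negativity of $u$, and thus of $\mu := \inf_{y\in\X,\, s>0} u(y,s) \geq 0$, is immediate from the comparison principle (Theorem~\ref{thm:compar_pr_basic}) applied with the trivial sub-solution $0$. It remains to show $u(x,t) \neq \mu$ whenever $t > 0$, which I would argue by contradiction: suppose $u(x_0, t_0) = \mu$ for some $x_0 \in \X$ and $t_0 > 0$.

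Setting $\gamma(x,s) := m + (Gu)(x,s)$, so that $m \leq \gamma \leq \ka$ by \eqref{assum:Gpositive}, the equation \eqref{eq:basic} yields
\[
u(x_0,t_0) = e^{-\int_0^{t_0}\gamma(x_0,\sigma)d\sigma}u_0(x_0) + \int_0^{t_0}e^{-\int_s^{t_0}\gamma(x_0,\sigma)d\sigma}\ka(a*u)(x_0,s)\,ds.
\]
Using the elementary identity $\int_0^{t_0}\gamma(x_0,s)e^{-\int_s^{t_0}\gamma(x_0,\sigma)d\sigma}\,ds = 1 - e^{-\int_0^{t_0}\gamma(x_0,\sigma)d\sigma}$ to rewrite $\mu = \mu\cdot 1$ in an analogous form and subtracting, I obtain
\[
0 = e^{-\int_0^{t_0}\gamma(x_0,\sigma)d\sigma}\bigl(u_0(x_0) - \mu\bigr) + \int_0^{t_0}e^{-\int_s^{t_0}\gamma(x_0,\sigma)d\sigma}\Bigl\{\ka\bigl((a*u)(x_0,s) - \mu\bigr) + \mu\bigl(\beta - (Gu)(x_0,s)\bigr)\Bigr\}ds.
\]
Since $u \geq \mu$ with $\int a = 1$, and $(Gu)(x_0,s) \leq \beta$ by \eqref{assum:Gpositive}, each of the three quantities $u_0(x_0)-\mu$, $(a*u)(x_0,s)-\mu$, $\beta-(Gu)(x_0,s)$ is non-negative; they must therefore all vanish, and since $(a*u)(x_0,s)$ and $(Gu)(x_0,s)$ depend continuously on $s$ (via Proposition~\ref{prop:u_in_BUC} and the Lipschitz continuity of $G$), they vanish for every $s \in [0, t_0]$.

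From $(a*u)(x_0,s) = \mu = \mu\int a$ together with $u \geq \mu$ and the non-degeneracy \eqref{assum:a_nodeg} of $a$ on $B_\varrho(0)$, I deduce $u(\cdot,s) = \mu$ a.e., hence everywhere, on $B_\varrho(x_0)$ (using continuity of $u$). Repeating the argument centered at each $y \in B_\varrho(x_0)$ at time $t_0$ propagates the equality to $B_{2\varrho}(x_0)\times[0,t_0]$, and iterating yields $u \equiv \mu$ on $\X\times[0,t_0]$. Plugging the constant $u \equiv \mu$ back into \eqref{eq:basic} leaves $\mu(\beta - (G\mu)(x)) = 0$ for every $x \in \X$. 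If $\mu = 0$, then $u_0 \equiv 0$, contradicting $u_0 \not\equiv 0$; if $\mu > 0$, then $u_0 \equiv \mu$ with $\mu < \theta$ (since $u_0 \not\equiv \theta$), so $\mu \in (0,\theta)$ and $G\mu \equiv \beta$, contradicting the standing hypothesis. The main technical device is the subtraction trick that splits the single Duhamel identity into three independent non-negative contributions, each of which must vanish; the subsequent ball-spreading step and the final dichotomy on $\mu$ are routine once that algebraic decomposition is in hand.
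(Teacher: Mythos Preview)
Your proof is correct and takes a genuinely different route from the paper. The paper argues pointwise at the supposed minimum $(x_0,t_0)$: since $\partial_t u(x_0,t_0)=0$ at an interior-in-time minimum and $u(\beta-Gu)\ge0$, the equation forces $(L_a u)(x_0,t_0)=0$, and the kernel non-degeneracy \eqref{assum:a_nodeg} then yields $u(\cdot,t_0)\equiv l_0$ by ball-spreading at the \emph{single} time slice $t_0$. To pass from $l_0=0$ back to $u_0\equiv0$, the paper runs a separate uniqueness argument for the time-reversed equation on $[0,t_0]$. Your Duhamel-subtraction approach bypasses both of these steps: the identity forces $(a*u)(x_0,s)=\mu$ for \emph{every} $s\in[0,t_0]$, so the ball-spreading propagates over the whole strip $\X\times[0,t_0]$ at once, and $u_0\equiv\mu$ drops out for free with no time reversal needed. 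The paper's version reads more transparently as a strong maximum principle, but yours is shorter and avoids the somewhat delicate backward-uniqueness step entirely.
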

\begin{proof}
	By Proposition~\ref{prop:solution_stays_in_tube}, $0\leq u(x,t)\leq \theta$, $x\in\X$, $t\geq0$. We denote
	\begin{equation}\label{jump}
		(L_a u)(x,t) = \ka (a*u)(x,t) - \ka u(x,t).
	\end{equation}
	Then, by \eqref{assum:Gpositive},
\begin{equation}\label{hintineq}
  \dfrac{\partial u}{\partial t}(x,t)- (L_a u)(x,t) = u(x,t) (\beta-(Gu)(x,t)) \ge0.
\end{equation}
Prove that, under \eqref{hintineq}, $u$ cannot attain its infimum on $\X\times(0,\infty)$ without being a constant. Indeed, suppose that, for some $x_0\in\X$, $t_0>0$,
\begin{equation}\label{minleq}
u(x_0,t_0)\leq u(x,t), \quad x\in\X, t>0.
\end{equation}
Then, clearly,
\begin{equation}\label{minpoint}
  \dfrac{\partial u}{\partial t}(x_0,t_0)=0,
\end{equation}
and \eqref{hintineq} yields
$(L_a u)(x_0,t_0)\le0$. On the other hand, \eqref{jump} and \eqref{minleq} imply
$(L_a u)(x_0,t_0)\ge0$. Therefore,
\[
  \int_\X a(x_0-y)(u(y,t_0)-u(x_0,t_0)) \,dy=0.
\]
Then, by \eqref{assum:a_nodeg}, for all $y\in B_\varrho(x_0)$,
\begin{equation}
u(y,t_0)=u(x_0,t_0).\label{eq:getmax}
\end{equation}
By the same arguments, for an arbitrary $x_{1}\in\partial B_{\varrho}(x_0)$,
we obtain \eqref{eq:getmax}, for all $y\in B_{\varrho}(x_{1})$.
Hence, \eqref{eq:getmax} holds on $B_{2\varrho(x_0)}$, and so on. As a result, \eqref{eq:getmax}~holds, for all $y\in\X $, thus $u(\cdot,t_0)$ is a constant, i.e.
\[
u(x,t_0)=u(x_0,t_0)=:l_0\in[0,\theta], \quad x\in\X.
\]
Then, considering \eqref{eq:basic} at $(x_0,t_0)$, and taking into account \eqref{minpoint}, one gets 
\[
0=u(x_0,t_0) \bigl(\beta - (Gu) (x_0,t_0)\bigl)=l_0(\beta- Gl_0).
\]
By the assumption, the latter equality is possible if only $l_0\in\{0,\theta\}$, i.e. either $u(\cdot,t_0) \equiv 0$ or $u(\cdot, t_0)\equiv \theta$.
By \eqref{minleq}, $u(x_0,t_0)=\theta\geq\sup_{y\in\X,s>0}u(y,s)$ implies $u\equiv\theta$, that contradicts $u_0\not\equiv\theta$. 
Hence $u(x,t_0)=u(x_0,t_0)=0$, $x\in\X$. 
Then, by Theorem~\ref{thm:exist_uniq_BUC}, $u(x,t)=0$, $x\in\X$, $t\geq t_0$.
And now one can consider the reverse time in \eqref{eq:basic} starting from $t=t_0$. Namely, we set $w(x,t):=u(x,t_0-t)$, $t\in[0,t_0]$, $x\in\X$. Then $w(x,0)=u(x,t_0)=0$, $x\in\X$, and
\begin{equation}
	\dfrac{\partial w}{\partial t}(x,t)= w(x,t)(Gw)(x,t) -\ka^{+}(a^{+}*w)(x,t) +mw(x,t).
\label{eq:inverse_basic}
\end{equation}
Prove that the equation \eqref{eq:inverse_basic} with the initial condition $w(\cdot,0)\equiv 0$ has a unique classical solution $w \equiv 0$ in $C_b([0,t_0]\to E)$. 
Indeed, let $w\in C_b([0,t_0]\to E)$ solve \eqref{eq:inverse_basic}. Suppose that the set 
\[
	K:=\bigl\{t\in[0,t_0]\bigm\vert \|w(\cdot,t)\|_E >0\bigr\}
\]
is not empty, i.e. $w \not\equiv 0$. We define then $T:=\inf K$. 
In particular, $\|w(\cdot,t)\|_E=0$ for $t\in[0,T)$ (note that the latter interval might be empty if $T=0$). Since the function $\tau\mapsto \|w(\cdot,\tau)\|_E$  is continuous, we have that $\|w(\cdot,T)\|_E=0$ as well. 
Therefore, $T=t_0$ would contradict the assumption $K\neq\emptyset$; hence $T<t_0$.
Consider now the equation \eqref{eq:inverse_basic} for $t\in[T,t_0]$ with the initial value $w(\cdot,T)\equiv 0$. 
It is straightforward to check that the assumptions on $G$ imply that, for any $r>0$, there exists $\Delta T>0$, such that $T+\Delta T<t_0$ and the mapping
\[
	\Psi(w)(x,t) = \int_T^{T+t} w(x,s) (Gw)(x,s) -\ka (a*w)(x,s) +m w(x,s)ds.
\]
is a contraction on $C_b([0,\Delta T]\to E)$. Therefore, by the uniqueness arguments, $w(\cdot,t)\equiv 0$  for $t\in [T,T+\Delta T]$ that contradicts the choice of $T$. Therefore, $K=\emptyset$, i.e. $w(\cdot,t)\equiv 0$ for all $t\in[0,t_0]$, in particular, $u(\cdot,0)=w(\cdot,t_0)\equiv 0$, that contradicts $u_0\not\equiv0$. Thus, the initial assumption was wrong, and \eqref{minleq} can not hold.
The proof is fulfilled.
\end{proof}

In the sequel, it will be useful to consider the solution to \eqref{eq:basic} as a nonlinear transformation of the initial condition.
\begin{defn}
	For a fixed $t>0$, define the mapping $Q_{t}$ on $\{f\in E\mid f\geq0\}$ by
  \begin{equation}
  (Q_{t}f)(x):=u(x,t),\quad x\in\X,\label{def:Q_T}
  \end{equation}
  where $u(x,t)$ is the solution to \eqref{eq:basic} with the initial condition $u(x,0)=f(x)$.
\end{defn}
Let us collect several properties of $Q_t$ needed below.
\begin{prop}
  \label{prop:Q_def}
	Let \eqref{assum:kappa>m}--\eqref{assum:G_increas_on_const} hold. Then, for any fixed $t>0$, the mapping $Q:=Q_{t}:\{f\in E\mid f\geq0\}\to\{f\in E\mid f\geq0\}$ satisfies the following properties
  \begin{enumerate}[label=\textnormal{(Q\arabic*)}]
    \item $Q: E_\theta^+ \to E_\theta^+$; \label{eq:QBtheta_subset_Btheta}
    \item let $T_y$, $y\in\X$, be a translation operator, given by \eqref{shiftoper}, then \label{prop:QTy=TyQ}
        \[
          (Q T_{y}f)(x)=( T_{y}Qf)(x), \quad x,y\in\X,\ f\in E_\theta^+;
        \]
    \item $Q0=0$, $Q\theta=\theta$, and $Q r>r$, for any constant $r\in(0,\theta)$; \label{prop:Ql_gr_l}
    \item if $f,g\in E_\theta^+$ and $f \leq g $, then $Qf \leq Qg$;\label{prop:Q_preserves_order}
    \item if $f_{n}\locun f$, then $(Qf_{n})(x)\to (Qf)(x)$ for (a.a.) $x\in\X$.\label{prop:Q_cont}
  \end{enumerate}
\end{prop}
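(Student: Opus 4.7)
The plan is to dispatch the five items in turn by invoking the existence/uniqueness, comparison, and stability theorems already established in Sections~\ref{sec:exist_uniq}--\ref{sec:comparison_pr}; no new machinery is needed.

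First, \ref{eq:QBtheta_subset_Btheta} is immediate from Proposition~\ref{prop:solution_stays_in_tube}: under \eqref{assum:kappa>m}--\eqref{assum:sufficient_for_comparison} the classical solution starting from any $f\in E_\theta^+$ satisfies $0\leq u(\cdot,t)\leq\theta$ for every $t\geq 0$. For \ref{prop:QTy=TyQ}, if $u$ solves \eqref{eq:basic} with initial datum $f$, set $w(x,t):=u(x-y,t)$. Since $a*u$ is translation invariant in $x$ and \eqref{assum:G_commute_T} gives $T_y(Gu)=G(T_y u)$, the function $w$ also solves \eqref{eq:basic}, now with initial datum $T_y f$. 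Uniqueness (Theorem~\ref{thm:exist_uniq_basic}) forces $w(\cdot,t)=Q_t(T_y f)$, i.e.\ $T_y Q_t f=Q_t T_y f$.

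For \ref{prop:Ql_gr_l}, $Q0=0$ and $Q\theta=\theta$ follow directly from \eqref{assum:Gpositive}: $G0=0$ and $G\theta=\beta$ make $0$ and $\theta$ stationary solutions of \eqref{eq:basic}. For $r\in(0,\theta)$, take $u_0\equiv r$; by \ref{prop:QTy=TyQ} applied for every $y\in\X$, each $u(\cdot,t)$ is translation invariant, hence a constant $\phi(t)$ in $x$. Since $a$ is a probability kernel, $(a*\phi)(x,t)=\phi(t)$, and by \eqref{assum:G_commute_T} also $G\phi(t)$ is a constant in $x$, so $\phi$ satisfies the scalar ODE
\[
\phi'(t)=\phi(t)\bigl(\beta-G\phi(t)\bigr),\qquad \phi(0)=r,
\]
which is Lipschitz in $\phi$ by \eqref{assum:Glipschitz}. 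Property \ref{eq:QBtheta_subset_Btheta} together with uniqueness of this ODE (the equilibria $0$ and $\theta$ are unreachable from $r\in(0,\theta)$) gives $\phi(t)\in(0,\theta)$ for all $t\geq 0$, and then \eqref{assum:G_increas_on_const} yields $\beta-G\phi(t)>0$; hence $\phi$ is strictly increasing and $Qr=\phi(t)>r$.

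Item \ref{prop:Q_preserves_order} is a direct application of the third part of Theorem~\ref{thm:compar_pr_basic} to the classical solutions with initial data $f\leq g$ in $E_\theta^+$. Finally, for \ref{prop:Q_cont}, set $Av:=\ka\,a*v$ and check the hypotheses of Proposition~\ref{prop:loc_uni} with $\varrho=\theta$: assumption~\eqref{assum:G_locally_continuous} handles $G$, while $A$ is continuous in the locally uniform topology on $E_\theta^+$ by a standard cut-off (for any compact $K\subset\X$ and $\eps>0$ choose $R$ with $\int_{\lvert z\rvert>R}a(z)\,dz<\eps$; then $\lvert(a*v_n-a*v)(x)\rvert\leq\sup_{z\in K+B_R}|v_n(z)-v(z)|+2\theta\eps$ uniformly in $x\in K$). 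Proposition~\ref{prop:loc_uni} then delivers $u_n(\cdot,t)\locun u(\cdot,t)$ uniformly in $t\in[0,T]$, so in particular $(Qf_n)(x)\to(Qf)(x)$ pointwise.

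The only step requiring a little care is \ref{prop:Ql_gr_l}, where one has to keep $\phi(t)$ strictly inside $(0,\theta)$ in order to invoke \eqref{assum:G_increas_on_const}; the remaining four properties are essentially bookkeeping on top of the existence/uniqueness, comparison, and stability results already proved.
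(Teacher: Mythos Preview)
Your proof is correct and follows the same overall outline as the paper's: \ref{eq:QBtheta_subset_Btheta} from Proposition~\ref{prop:solution_stays_in_tube}, \ref{prop:Q_preserves_order} from the comparison principle, \ref{prop:Q_cont} from Proposition~\ref{prop:loc_uni}, and \ref{prop:Ql_gr_l} by reducing constant initial data to a scalar ODE. The one place you diverge is \ref{prop:QTy=TyQ}: the paper reopens the fixed-point construction from Theorem~\ref{thm:exist_uniq_BUC} and shows by induction over the time intervals $[\Upsilon_{n-1},\Upsilon_n]$ that $T_y$ commutes with the Picard map $\Phi_\tau[u_\tau]$, whereas you simply check that the translate $w(x,t)=u(x-y,t)$ solves \eqref{eq:basic} (using translation invariance of convolution and \eqref{assum:G_commute_T}) and invoke uniqueness. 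Your argument is shorter and more transparent; the paper's has the modest advantage of not appealing to uniqueness as a separate input. You are also more careful than the paper in \ref{prop:Ql_gr_l}, explicitly arguing that $\phi(t)$ stays in $(0,\theta)$ before using \eqref{assum:G_increas_on_const}, and in \ref{prop:Q_cont}, where you verify the locally-uniform continuity of $v\mapsto a*v$ that Proposition~\ref{prop:loc_uni} requires.
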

\begin{proof}
	The property~\ref{eq:QBtheta_subset_Btheta} follows from Proposition~\ref{prop:solution_stays_in_tube}.
	To prove~\ref{prop:QTy=TyQ} we note that, by \eqref{assum:G_commute_T}, $ T_yG=G T_y$, and $ T_y(a*u)=a*( T_y u)$, and then,  by \eqref{eq:exist_uniq_BUC:B}, $B( T_y v)= T_y(Bv)$. Using further the notations in the proof of Theorem~\ref{thm:exist_uniq_BUC}, we will proceed by the induction in $n$. Namely, assume that $Q_t T_y=T_y Q_t$ for $t\in[0,\Upsilon_{n-1}]$. 
Denote $\Phi_\tau[u_\tau]:=\Phi_\tau$, given by \eqref{eq:exist_uniq_BUC:Phi_v} (to specify the dependence on the initial condition $u_\tau$). Then $T_y (\Phi_\tau[u_\tau] v)=\Phi_\tau[T_yu_\tau] (T_y v) $ for all $v\in \x_{\tau,\Upsilon}^+(r_{n})$, where $[\tau,\Upsilon]:=[\Upsilon_{n-1},\Upsilon_n]$. Then, for $t\in(\tau,\Upsilon]$,
\begin{align*}
Q_tT_y f &= \lim_{N\rightarrow \infty} \bigl(\Phi_\tau[Q_\tau T_y f]\bigr)^N \bigl(T_y  v(\cdot,t)\bigr)= \lim_{N\rightarrow \infty} \bigl(\Phi_\tau[T_y Q_\tau  f]\bigr)^N \bigl(T_y  v(\cdot,t)\bigr)\\& = \lim_{N\rightarrow \infty} T_y (\Phi_\tau[Q_\tau f])^N v(\cdot,t) = T_y  Q_{t} f.
\end{align*}

By (Q2), $u_0(x)\equiv r\in(0,\theta)$ yields $u(\cdot,t) = \mathrm{const}$, $t\geq0$.
Then, by \eqref{assum:Gpositive} and \eqref{assum:G_increas_on_const}, for any $t\geq0$, we have
\[
	Qr = u(t) = r + \int_0^t u(s)(\beta-(Gu)(s))ds > 0.
\]
Hence the property~\ref{prop:Ql_gr_l} holds.
The property~\ref{prop:Q_preserves_order} holds by Theorem~\ref{thm:compar_pr}. The property~\ref{prop:Q_cont} is a weaker version of Proposition~\ref{prop:loc_uni}.
\end{proof}
	\begin{rem}\label{rem:monostability}
	Take an arbitrary constant $r\in(0,\theta)$. One can treat then $r$ as a constant function from $E_\theta^+$.
  	By \ref{prop:Ql_gr_l} and \ref{prop:Q_preserves_order}, the sequence $\bigl( Q_t^n r\bigr)_{n\geq1}\subset (0,\theta]$ is non-decreasing for an arbitrary $t>0$.
  	Hence there exists the limit $r_\infty:=\lim\limits_{n\to\infty} Q_t^n r\in(0,\theta]$. Next, by \ref{prop:Q_cont},
  	\[
  		Q_tr_\infty = Q_t \lim_{n\to\infty}  Q_t^n r = \lim_{n\to\infty} Q_t^{n+1}r = r_\infty.
  	\]
  	Hence, by \ref{prop:Ql_gr_l}, $r_\infty=\theta$.
  	By Proposition \ref{prop:u_in_BUC}, $Q_t r$ is uniformly continuous in~$t>0$. 
		As a result, $\lim\limits_{t\to\infty} Q_t r = \theta$.	
		Therefore, by \ref{prop:Q_preserves_order}, for any $u_0\in E$ with $0< r \leq u_0 \leq \theta$, we have
		\[
			\theta = \lim_{t\to\infty} Q_tr \leq \lim_{t\to\infty} Q_t u_0 \leq \theta, \qquad \text{ and hence } \lim_{t\to\infty} Q_t u_0 = \theta.
		\]
		As a result, $u\equiv 0$ is unstable and $u\equiv \theta$ is asymptotically stable solutions to \eqref{eq:basic} in $E_\theta^+$. For this reason, we refer to \eqref{eq:basic} as to a monostable-type equation.
	\end{rem}

Let $\S$ denote a unit sphere in $\X$ centered at the origin:
\[
\S =\bigl\{x\in\X\bigm| |x|=1\bigr\};
\]
in particular, $S^{0}=\{-1,1\}$.

\begin{defn}\label{def:monotoneindirection}
A function $f\in E$ is said to be increasing (decreasing, constant) along the vector $\xi\in\S $ if, for a.a. $x\in\X $, the function $f(x+s\xi)=( T_{-s\xi}f)(x)$ is increasing (decreasing, constant) in $s\in\R$, respectively.
\end{defn}

\begin{prop}\label{prop:monot_along_vector_sol}
	Let \eqref{assum:kappa>m}--\eqref{assum:G_increas_on_const} hold.
  Let $u_0\in E_\theta^+$ be the initial condition for the equation \eqref{eq:basic} which is
	increasing (decreasing, constant) along a vector $\xi\in\S $; and $u(\cdot,t)\in E_\theta^+$, $t\geq0$, be the corresponding solution (cf. Proposition~\ref{prop:solution_stays_in_tube}).
  Then, for any $t>0$, $u(\cdot,t)$ is increasing (decreasing, constant, respectively) along the $\xi$.
\end{prop}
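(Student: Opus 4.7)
The proof hinges on two ingredients already established: translation invariance of the equation (assumption~\eqref{assum:G_commute_T}, equivalently property~\ref{prop:QTy=TyQ}) and the comparison principle (Theorem~\ref{thm:compar_pr_basic}). The plan is to translate monotonicity along~$\xi$ into a pointwise inequality between $u_0$ and its shift, propagate this inequality forward in time via comparison, and then translate back.

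Concretely, recall that $u_0$ is increasing along $\xi\in\S$ iff $(T_{-h\xi}u_0)(x)=u_0(x+h\xi)\geq u_0(x)$ for a.a.\ $x\in\X$ and every $h>0$. Fix such an $h>0$ and set
\[
v(x,t):=(T_{-h\xi}u)(x,t)=u(x+h\xi,t),\qquad x\in\X,\ t\geq 0.
\]
By \eqref{assum:G_commute_T} and the translation invariance of the convolution with $a$, one checks directly that $v\in\Y_\infty$ and solves \eqref{eq:basic} with initial condition $v(\cdot,0)=T_{-h\xi}u_0$. Since $0\leq u_0\leq T_{-h\xi}u_0\leq \theta$ (the second inequality from the assumed monotonicity, the outer bounds from $u_0\in E_\theta^+$), the third part of Theorem~\ref{thm:compar_pr_basic} applies and yields
\[
u(x,t)\leq v(x,t)=u(x+h\xi,t),\qquad x\in\X,\ t\geq 0.
\]
As $h>0$ was arbitrary, this says exactly that $u(\cdot,t)$ is increasing along $\xi$ for every $t>0$.

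The decreasing case is symmetric: replace $\xi$ by $-\xi$ in the argument above, or equivalently reverse the inequality for the shift. The constant case follows by combining the two: if $u_0$ is constant along~$\xi$, it is simultaneously increasing and decreasing along~$\xi$, hence so is $u(\cdot,t)$, which forces constancy along~$\xi$.

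I do not anticipate any serious obstacle; the statement is essentially a soft consequence of the two structural results already proved. The only minor subtlety is the $L^\infty$ case, where all pointwise inequalities must be read almost everywhere — but this is consistent with the convention fixed in Section~\ref{sec:assumptionsandresults} and with the interpretation of the comparison principle in $E=L^\infty(\X)$, so no extra work is needed.
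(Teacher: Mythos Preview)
Your proof is correct and coincides with the paper's own argument: both use that the translated function $T_{-h\xi}u$ is again a solution (translation invariance, \eqref{assum:G_commute_T}/\ref{prop:QTy=TyQ}) and then apply the comparison principle (Theorem~\ref{thm:compar_pr_basic}, equivalently \ref{prop:Q_preserves_order}) to propagate the ordering of the initial data. The only cosmetic difference is that the paper phrases everything through the operator $Q_t$ and its properties \ref{prop:QTy=TyQ}, \ref{prop:Q_preserves_order}, whereas you invoke the underlying assumption and comparison theorem directly.
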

\begin{proof}
Let $u_0$ be decreasing along a $\xi\in\S $.
Take any $s_1\leq s_2$ and consider two initial conditions to \eqref{eq:basic}: $u_0^i(x)=u_0(x+s_i\xi)=( T_{-s_i\xi}u_0)(x)$, $i=1,2$ (cf. \eqref{shiftoper}).
Since $u_0$ is decreasing, $u_0^1(x)\geq u_0^2(x)$, $x\in\X$.
Then, by Proposition~\ref{prop:Q_def}, 
\[
	 T_{-s_1\xi}Q_tu_0=Q_t T_{-s_1\xi}u_0=Q_t u_0^1\geq Q_tu_0^2=Q_t T_{-s_2\xi}u_0=
	 T_{-s_2\xi}Q_tu_0,
\]
that proves the statement. The cases of a decreasing $u_0$ can be considered in the same way. The constant function along a vector is decreasing and decreasing simultaneously.
\end{proof}

To prove the hair-trigger effect (Theorems~\ref{thm:ht1myes}, \ref{thm:ht1mno}), we will follow the abstract scheme proposed in \cite{Wei1982a} for a dynamical system in discrete time. Note that all statements there were considered in the space $E=C_{b}(\X)$.

Consider the set $N_\theta$ of all non-increasing functions $\varphi\in C(\R)$, such that
$\varphi(s)=0$, $s\geq0$, and
\begin{equation*}
\varphi(-\infty):=\lim _{s\to-\infty}\varphi(s)\in(0,\theta).\label{def:fy_Weinberger}
\end{equation*}

For arbitrary $s\in\R$, $c\in\R$, $\xi\in\S $,
define the following  mapping $V_{s,c,\xi}:L^\infty(\R)\to L^\infty(\X)$
\begin{equation}\label{defofV}
  (V_{s,c,\xi}g)(x)=g(x\cdot \xi+s+c), \quad x\in\X.
\end{equation}
Fix an arbitrary $\varphi\in N_\theta$.
 For $t>0$, $c\in\R$, $\xi\in\S $, consider the mapping $R_{t,c,\xi}:\ L^{\infty}(\R)\to L^{\infty}(\R)$, given by
\begin{equation}
	(R_{t,c,\xi}g)(s)=\max\bigl\{ \varphi(s),(Q_t (V_{s,c,\xi}g))(0)\bigr\},\quad s\in\R,\label{eq:iterop_by_Weinberger}
\end{equation}
where $Q_t:E\to E$ is a mapping which satisfies the conditions (Q1)--(Q5) in Proposition~\ref{prop:Q_def} (in particular, one can consider $Q_t$ given by \eqref{def:Q_T} provided that  \eqref{assum:kappa>m}--\eqref{assum:G_increas_on_const} hold).
Consider now the following sequence of functions
\begin{equation}\label{fiteration}
  f_{n+1}(s)=(R_{t,c,\xi}f_n)(s),\quad f_0(s)=\varphi(s),\qquad s\in\R, n\in\N\cup\{0\}.
\end{equation}
By Proposition~\ref{prop:Q_def} and \cite[Lemma~5.1]{Wei1982a}, $0\leq \phi(s) \leq \theta$, $s\in\R$, implies $0\leq f_n(s) \leq f_{n+1}(s) \leq \theta$, $s\in\R$, $n\in\N$; 
hence one can define the following limit
\begin{equation}
	f_{t,c,\xi}(s):= \lim_{n\to\infty}f_n(s), \quad s\in\R.\label{eq:limit_func_Weinberger}
\end{equation}
Also, by \cite[Lemma~5.1]{Wei1982a}, for fixed $\xi\in\S $, $t>0$, $n\in\N$, the functions $f_n(s)$ and $f_{t,c,\xi}(s)$ are non-increasing in $s$ and in $c$; moreover, $f_{t,c,\xi}(s)$ is a lower semi-continuous function of $s,c,\xi$, as a result, this function is continuous from the right in $s$ and in $c$. Note also, that $0\leq f_{t,c,\xi}\leq\theta$.
Then, for any $c,\xi$, one can define the limiting value
\[
f_{t,c,\xi}(\infty):=\lim_{s\to\infty}f_{t,c,\xi}(s).
\]
Next, for any $t>0$, $\xi\in\S $, we define
\begin{equation*}
c_t^{*}(\xi)=\sup\{ c\mid f_{t,c,\xi}(\infty)=\theta\} \in\R\cup\{-\infty,\infty\},
\end{equation*}
where, as usual, $\sup\emptyset:=-\infty$. By \cite[Propositions~5.1, 5.2]{Wei1982a}, one has
\begin{equation}\label{jumpfunc}
  f_{t,c,\xi}(\infty)=\begin{cases}
    \theta, & c<c_t^*(\xi),\\
    0, & c\geq c_t^*(\xi),
  \end{cases}
\end{equation}
cf. also \cite[Lemma~5.5]{Wei1982a}; moreover, $c_t^*(\xi)$ is a lower semi-continuous function of~$\xi$. It is crucial that, by \cite[Lemma 5.4]{Wei1982a}, neither $f_{t,c,\xi}(\infty)$ nor $c_t^{*}(\xi)$ depends on the choice of $\varphi\in N_\theta$. Note that the monotonicity of $f_{t,c,\xi}(s)$ in $s$ and \eqref{jumpfunc} imply that, for $c<c_t^*(\xi)$, $f_{t,c,\xi}(s)=\theta$, $s\in\R$.

Define
\begin{equation}\label{eq:TauT}
	\Upsilon_{t}:=\bigl\{ x\in\X \bigm\vert x\cdot\xi\leq c_t^{*}(\xi), \xi\in \S \bigl\}, \quad t>0.
\end{equation}
For $A\subset\X$, $x\in\X$, $s\in\R$, we denote also
\[
x+A:=\{x+y\mid y\in A\}\subset\X, \quad sA:=\{sy\mid y\in A\}\subset\X.
\]
We will need the following Weinberger's result:
\begin{lem}[{cf. \cite[Theorem~6.2]{Wei1982a}}] \label{lem:conv_to_theta-1}
  Let $E=\Cb$ and $v_0\in E_\theta^+$. Let, for some fixed $t>0$, $Q=Q_t:E\to E$  be a mapping which satisfies the conditions (Q1)--(Q5) in Proposition~\ref{prop:Q_def}, and $\Upsilon_t$ be defined by \eqref{eq:TauT}. Suppose that 
  \begin{equation}\label{eq:upsisnotemp}
  \inter(\Upsilon_t)\neq\emptyset.
  \end{equation}
  Then, for any compact set $\Tauin_t\subset\inter(\Upsilon_t)$ and for any $\sigma\in(0,\theta)$, one can choose a radius $r_\sigma=r_\sigma(Q_t,\Tauin_t)>0$, such that, for any fixed $x_0\in\X$,
  \begin{equation}\label{initcondissepfrom0}
    v_0(x)\geq\sigma, \quad x\in B_{r_\sigma}(x_0),
  \end{equation}
  implies
  \begin{equation}\label{liminfconv}
    \lim_{n\rightarrow\infty}\min\limits _{x\in n\Tauin_t} Q_t^n v_{0}(x)=\theta.
  \end{equation}
\end{lem}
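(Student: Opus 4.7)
The statement is a continuous-state version of Weinberger's \cite[Theorem~6.2]{Wei1982a}, and I would follow his scheme, making use of the properties \ref{eq:QBtheta_subset_Btheta}--\ref{prop:Q_cont} that $Q=Q_t$ has been verified to satisfy. The core idea is to produce, for every direction $\xi$, a lower bound on $Q_t^n v_0$ at positions of the form $nx$ (with $x\in\Tauin_t$) in terms of the one-dimensional iterates $f_n=R_{t,c,\xi}^n\varphi$ defined in \eqref{fiteration}; since $\Tauin_t\subset\inter(\Upsilon_t)$ and $f_n(s)\uparrow\theta$ for every $s$ whenever $c<c_t^{*}(\xi)$ by \eqref{jumpfunc}, such a bound drives the minimum on $n\Tauin_t$ to $\theta$.

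The geometric preparation is standard: by translation invariance \ref{prop:QTy=TyQ}, one may assume $x_0=0$; compactness of $\Tauin_t$ in the open set $\inter(\Upsilon_t)$ yields $\rho>0$ with $\Tauin_t+\overline{B_\rho}\subset\Upsilon_t$, hence $x\cdot\xi\leq c_t^{*}(\xi)-\rho$ for all $x\in\Tauin_t$ and $\xi\in\S$. Lower semi-continuity of $\xi\mapsto c_t^{*}(\xi)$ together with compactness of $\S$ then lets me extract a finite cover by directions $\xi_1,\dots,\xi_N$ and a common $\rho'\in(0,\rho)$ such that the speeds $c_j:=c_t^{*}(\xi_j)-\rho'/2$ satisfy $x\cdot\xi_j<c_j<c_t^{*}(\xi_j)$ uniformly in $x\in\Tauin_t$ and $j$.

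The heart of the argument is the inductive lower bound
\[
Q_t^n v_0(y)\geq f_n^{(j)}(y\cdot\xi_j-nc_j),\qquad y\in\X,\ j=1,\dots,N,
\]
for $n\geq n_0$, where $f_n^{(j)}:=R_{t,c_j,\xi_j}^n\varphi$ and $\varphi\in N_\theta$ is chosen with $\varphi(-\infty)<\sigma$. The inductive step follows from monotonicity \ref{prop:Q_preserves_order} together with \eqref{eq:iterop_by_Weinberger}: setting $s=z\cdot\xi_j-(n+1)c_j$, translation invariance \ref{prop:QTy=TyQ} converts the shifted bound into $(Q_t V_{s,c_j,\xi_j}f_n^{(j)})(0)$, which by definition is $\leq f_{n+1}^{(j)}(s)$. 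The delicate issue is initializing the induction at some $n_0$: because $\varphi(\,\cdot\,\xi_j-n_0c_j)$ is cylindrically invariant while $v_0$ is only assumed $\geq\sigma$ on a ball, the naive base case $n_0=0$ fails. I would resolve this by first applying $Q_t$ a fixed number of times to $v_0$, using the convolution with $a$ together with the strict positivity from \ref{prop:Ql_gr_l} and Proposition~\ref{prop:u_gr_0} to spread the initial ball into a set large enough to dominate the finitely many profiles $\varphi(\,\cdot\,\xi_j-n_0c_j)$ on the regions where they are positive; matching this construction simultaneously in all directions $\xi_j$ is what determines the required radius $r_\sigma=r_\sigma(\sigma,Q_t,\Tauin_t)$ and is the main technical obstacle.

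Once the inductive bound is in hand, the conclusion is essentially routine. For $y\in n\Tauin_t$, write $y=nx$ with $x\in\Tauin_t$; then $y\cdot\xi_j-nc_j=n(x\cdot\xi_j-c_j)\leq -n\rho'/2\to-\infty$. Since each $f_n^{(j)}$ is non-increasing in $s$ and $f_n^{(j)}(s_0)\uparrow\theta$ for every fixed $s_0$, for any $\epsilon>0$ one picks $s_0$ and $n$ large enough to force $f_n^{(j)}(y\cdot\xi_j-nc_j)\geq f_n^{(j)}(s_0)\geq\theta-\epsilon$ uniformly in $y$ and in $j=1,\dots,N$. Taking the minimum yields $\liminf_{n\to\infty}\min_{y\in n\Tauin_t}Q_t^n v_0(y)\geq\theta$, and the matching upper bound is immediate from \ref{eq:QBtheta_subset_Btheta}, so \eqref{liminfconv} follows.
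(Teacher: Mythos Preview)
The paper does not give its own proof of this lemma: it is quoted directly as Weinberger's result \cite[Theorem~6.2]{Wei1982a} and used as a black box (cf.\ the sentence preceding the lemma and Remark~\ref{rem:rsigmaisbad}). So your sketch cannot be compared against a proof in the paper; rather, it is an attempt to reconstruct Weinberger's argument.

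Your overall strategy is the right one, but the initialization step---which you yourself flag as ``the main technical obstacle''---is not resolved by the fix you propose. You suggest applying $Q_t$ a fixed number $n_0$ of times so that $Q_t^{n_0}v_0$ dominates each profile $y\mapsto\varphi(y\cdot\xi_j-n_0c_j)$ on the region where the latter is positive. But that region is the half-space $\{y\cdot\xi_j<n_0c_j\}$, on which the profile is bounded below by values close to $\varphi(-\infty)>0$ arbitrarily far from the origin (in directions orthogonal to $\xi_j$). On the other hand, if $v_0$ is merely $\geq\sigma$ on a ball and, say, compactly supported, then $Q_t^{n_0}v_0$ decays to $0$ at infinity (it is dominated by the linear evolution $e^{-mn_0 t}e^{n_0 tA}v_0$), so it cannot dominate a strictly positive half-space profile. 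Strict positivity from Proposition~\ref{prop:u_gr_0} gives $Q_t^{n_0}v_0>0$ pointwise, not a uniform lower bound at infinity, and (Q3) concerns constants only; neither closes the gap. The point of Weinberger's construction is precisely to avoid comparison with a single half-space profile: one chooses the directions $\xi_1,\dots,\xi_N$ so that the set $\{y:y\cdot\xi_j\leq s_j,\ j=1,\dots,N\}$ is \emph{bounded}, works with the compactly supported function $\min_j\varphi(y\cdot\xi_j-s_j)$ as the object to be dominated by $v_0$, and runs a more delicate iteration in which the $\max$ with $\varphi$ in \eqref{eq:iterop_by_Weinberger} is handled simultaneously across the $\xi_j$. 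Your separate, per-direction inductive bound $Q_t^n v_0\geq f_n^{(j)}(\cdot\,\xi_j-nc_j)$ cannot be launched in this way.
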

\begin{rem}\label{rem:rsigmaisbad}
Note that, in \cite[Theorem~6.2]{Wei1982a}, the existence of $r_\sigma$ is proved only; there are not any estimates on it. As a result, for a given $v_0\in E^+_\theta$, the condition \eqref{initcondissepfrom0} cannot be checked directly. 
\end{rem}
\begin{rem}
	There is no loss of generality if we assume that \eqref{initcondissepfrom0} holds for $x_0{=}\,0$ only.
	Indeed, for any $x_0\in\X$, $\Tauin_t\subset \inter (\Upsilon_t)$, there exist $N=N(x_0,\Tauin_t)$, $\widetilde{\Tauin}_t\subset \inter (\Upsilon_t)$, such that, for all $n\geq N$, one gets $x_0+n\Tauin_t\subset n\widetilde{\Tauin}_t$.
Therefore, we have 
\begin{align*}
	\theta &\geq \lim_{n\to\infty} \min_{x\in n\Tauin_t} (Q_t^n T_{-x_0} u_0)(x) = \lim_{n\to\infty} \min_{x\in n\Tauin_t} (T_{-x_0} Q_t^n u_0) (x)\\
	&= \lim_{n\to\infty} \min_{x\in x_0+ n\Tauin_t} (Q_t^n u_0)(x) \geq \lim_{n\to\infty} \min_{x\in n\widetilde{\Tauin}_t} (Q_t^n u_0)(x) = \theta.
\end{align*}
\end{rem}

The following statement presents a counterpart of Lemma~\ref{lem:conv_to_theta-1} for continuous time provided that the mapping $Q_t$ is given by the solution to \eqref{eq:basic} as in \eqref{def:Q_T}.
\begin{prop}\label{prop:hairtrigger_general}
	Let \eqref{assum:kappa>m}--\eqref{assum:G_increas_on_const} hold and $u_0\in \Buc$. Let $Q_t$, $t>0$, be given by \eqref{def:Q_T}, and let the corresponding $\Upsilon_t$, $t>0$, be given by \eqref{eq:TauT}.
Suppose that, for some compact $\Tauin\!\subset\!\inter(\Upsilon_1)$, there exists 
$\n\!\in\!\inter(\Tauin)$, such that
\begin{equation}\label{hyp:descrete_front_nonempty}
	\frac{1}{j}\n \in \inter(\Upsilon_{\frac{1}{j}}),\qquad j\in\N.  
\end{equation}
Let $\sigma\in(0,\theta)$ and $r_\sigma=r_\sigma(Q_1,\Tauin)$ be chosen according to Lemma~\ref{lem:conv_to_theta-1}. Suppose that
\begin{equation}\label{eq:init_cond_is_large}
	u_0(x) \geq \sigma, \qquad x\in B_{r_\sigma (Q_1, \Tauin)}.
\end{equation}
	Then, for the corresponding solution $u$ to \eqref{eq:basic} and for any compact $K\subset \X$, the following limit holds
	\begin{equation}\label{eq:hairtrigger_general}
		\min_{x\in K} u(x+t \n,t) \to \theta, \qquad t\to \infty.
	\end{equation}
\end{prop}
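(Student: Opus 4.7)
The plan is to bootstrap Weinberger's discrete-time convergence (Lemma~\ref{lem:conv_to_theta-1}) to continuous time via progressively finer discretizations $Q_{1/j}$, filling the gaps between discrete time-steps by the uniform continuity of $u$. The upper bound $\min_{x\in K}u(x+t\n,t)\leq \theta$ is immediate from Proposition~\ref{prop:solution_stays_in_tube}, so only the lower bound requires work.

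First I would apply Lemma~\ref{lem:conv_to_theta-1} directly to $Q_1$, the given $\Tauin$, and the datum $u_0$ (which, by hypothesis \eqref{eq:init_cond_is_large}, satisfies the required $\sigma$-lower bound on $B_{r_\sigma(Q_1,\Tauin)}$). Since $\n\in\inter(\Tauin)$, for any compact $\widetilde K\subset\X$ one has $\widetilde K+n\n\subset n\Tauin$ for all sufficiently large integers $n$, and therefore
\[
\lim_{\substack{n\to\infty\\ n\in\N}}\min_{y\in\widetilde K} u(y+n\n,n)=\theta.
\]

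Next I refine to the fine scale $1/j$. Fix $\varepsilon>0$. By Proposition~\ref{prop:u_in_BUC}, $u\in C_{ub}(\X\times\R_+)$, so there is $\delta>0$ with $|u(x,s)-u(y,t)|<\varepsilon$ whenever $|x-y|+|s-t|<\delta$; pick $j\in\N$ with $(1+|\n|)/j<\delta$. By assumption \eqref{hyp:descrete_front_nonempty}, $\n/j\in\inter(\Upsilon_{1/j})$, so I pick a compact $\Tauin^{(j)}\subset\inter(\Upsilon_{1/j})$ containing $\n/j$ in its interior. Fix $\sigma\in(0,\theta)$ and set $r:=r_\sigma(Q_{1/j},\Tauin^{(j)})$. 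Applying the integer-time conclusion above with $\widetilde K=\overline{B_r(0)}$, I find an integer $n_0$ with $u(\cdot,n_0)\geq\sigma$ on $\overline{B_r(n_0\n)}$. Regarding $u(\cdot,n_0)$ as a new initial datum (so $(Q_{1/j})^k u(\cdot,n_0)=u(\cdot,n_0+k/j)$ by uniqueness in Theorem~\ref{thm:exist_uniq_BUC}), Lemma~\ref{lem:conv_to_theta-1} together with the remark following it (applied at the translated centre $x_0=n_0\n$) yields, with $s_k:=n_0+k/j$,
\[
\lim_{k\to\infty}\min_{y\in K} u(y+s_k\n,s_k)=\theta
\]
for any compact $K\subset\X$, since $n_0\n+k\Tauin^{(j)}\supset K+s_k\n$ once $k$ is large enough.

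Finally, for arbitrary $s\geq n_0$ pick $k$ with $s\in[s_k,s_{k+1}]$; then $|s-s_k|\leq 1/j$ and $|(s-s_k)\n|\leq |\n|/j$, so the continuity modulus gives $|u(y+s\n,s)-u(y+s_k\n,s_k)|<\varepsilon$ uniformly in $y\in K$, and hence $\min_{y\in K} u(y+s\n,s)>\theta-2\varepsilon$ for all sufficiently large $s$; together with $u\leq\theta$ and the arbitrariness of $\varepsilon$ this gives \eqref{eq:hairtrigger_general}. The main obstacle in this argument is the two-scale bootstrap: Lemma~\ref{lem:conv_to_theta-1} provides no quantitative control on $r_\sigma(Q_{1/j},\Tauin^{(j)})$ (cf.\ Remark~\ref{rem:rsigmaisbad}), so one cannot apply the fine-scale iteration directly to $u_0$; the coarse-scale integer-time step exists precisely to manufacture, at a later time $n_0$, a state bounded below by $\sigma$ on a ball of whatever radius the fine-scale lemma demands.
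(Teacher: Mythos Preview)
Your proof is correct and follows essentially the same strategy as the paper: bootstrap Weinberger's discrete-time lemma from the coarse grid $\N$ to a fine grid $\{n_0+k/j:k\in\N\}$ via a re-start at a later time where the solution is large on a sufficiently big ball, then bridge to continuous time via the uniform continuity of $u$ (Proposition~\ref{prop:u_in_BUC}). The one notable difference is that the paper threads an inductive chain through \emph{all} intermediate scales $1/2,1/3,\ldots,1/j$, whereas you jump directly from scale $1$ to the single scale $1/j$ dictated by $\varepsilon$; your shortcut is valid because the integer-time convergence on the expanding sets $n\Tauin$ (with $0\in\inter(\Tauin-\n)$) already furnishes, at some integer time $n_0$, a ball of \emph{any} prescribed radius on which $u\geq\sigma$, so no intermediate bootstrapping is needed.
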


\begin{proof}
	First, we note that, by Proposition~\ref{prop:Q_def}, the conditions (Q1)--(Q5) hold for all $Q=Q_t$, $t>0$.
	We denote $K_1:=-\n + \Tauin$. Because of \eqref{eq:init_cond_is_large}, one can apply Lemma~\ref{lem:conv_to_theta-1} for $t=1$ and $v_0(x):=u_0(x)$, $x\in\X$. Namely, since $Q_1^nv_0(y)=Q_1^nu_0(y)=u(y,n)$, $y\in\X$, one gets, by \eqref{liminfconv},
	\begin{equation}\label{eq:convatnatural}
	\min_{x\in n K_1} u(x+n \n,n)=\min_{y\in n\Tauin}u(y,n)\to\theta, \quad n\to\infty.
	\end{equation}
	Next, by~\eqref{hyp:descrete_front_nonempty}, $0 \in -\tfrac{1}{2}\n + \inter(\Upsilon_{\frac{1}{2}})$.
	Choose now any compact $K_2\subset -\tfrac{1}{2}\n + \inter(\Upsilon_{\frac{1}{2}})$, such that $0\in\inter(K_2)$.
	By Lemma~\ref{lem:conv_to_theta-1} for $t=\frac12$ and $\Tauin_{\frac{1}{2}}:=K_2+\tfrac{1}{2}\n\subset \inter(\Upsilon_{\frac{1}{2}})$ there exists a radius
$r_\sigma(Q_{\frac{1}{2}},\Tauin_{\frac{1}{2}})>0$.
 By~\eqref{eq:convatnatural}, there exists $N_1\geq1$, such that, for all $n\geq N_1$, 
\begin{equation}\label{eq:sdads}
B_{r_\sigma(Q_{\frac{1}{2}},\Tauin_{\frac{1}{2}})}(0)\cup K\subset n K_1,
\qquad u(x+n \n,n) \geq \sigma,\quad x\in nK_1.
\end{equation}
 Set $S_1:=N_1$; by the latter inclusion and \eqref{eq:sdads}, one can apply Lemma~\ref{lem:conv_to_theta-1} for
$v_0(x):=u(x+S_1 \n,S_1)$, $x\in\X$. Then
\[
Q_{\frac{1}{2}}^nv_0(y)=u\Bigl(y+S_1 \n,S_1+\frac{n}{2}\Bigr), \quad y\in\X,
\]
and hence
\begin{align}
&\quad\min_{x\in nK_2} u\Bigl(x + \Bigl(S_1+\frac{n}{2}\Bigr)\n, S_1+ \frac{n}{2}\Bigr) \notag\\&=\min_{y\in n\Tauin_{\frac{1}{2}}}
		u\Bigl(y + S_1\n, S_1+ \frac{n}{2}\Bigr) \to \theta,\quad n\to\infty.
		\label{eq:qwqewwqe}
\end{align}
Similarly, choose a compact $K_3\subset -\tfrac{1}{3}\n+ \inter(\Upsilon_{\frac{1}{3}})$ with $0\in \inter(K_3)$, and consider Lemma~\ref{lem:conv_to_theta-1} with $t=\frac13$ and $\Tauin_{\frac{1}{3}}:=K_3+\tfrac{1}{3}\n\subset \inter(\Upsilon_{\frac{1}{3}})$. Then, there exists a radius $r_\sigma(Q_{\frac{1}{3}},\Tauin_{\frac{1}{3}})>0$, and, by \eqref{eq:qwqewwqe}, there exists $N_2\geq 2$ such that for all $n\geq N_2$, 
\[
B_{r_\sigma(Q_{\frac{1}{3}},\Tauin_{\frac{1}{3}})}\cup K \subset nK_2
\]
and
\[
	u\Bigl(x+\Bigr(S_1+\frac{n}{2}\Bigr)\n,S_1+\frac{n}{2}\Bigr) \geq \sigma,\quad x\in nK_2.
\]
Set $S_2:=S_1+\frac{N_2}{2}=N_1+\frac{N_2}{2}\geq2$ and apply Lemma~\ref{lem:conv_to_theta-1} with $v_0(x):=u(x+S_2\n,S_2)$, $x\in\X$. We have
	\[\min_{x\in nK_3} u\Bigl(x+\Bigl(S_2+\frac{n}{3}\Bigr)\n, S_2 + \frac{n}{3}\Bigr) =\min_{x\in n\Tauin_{\frac{1}{3}}} u\Bigl(x+S_2\n, S_2+ \frac{n}{3}\Bigr)\to \theta,\quad n\to\infty.
	\]
	By induction, for any $K_j\subset -\tfrac{1}{j}\n+ \inter(\Upsilon_{\frac{1}{j}})$, $j\geq 3$, with $0 \in \inter(K_j)$, one can set $\Tauin_{\frac{1}{j}}:=K_j+\tfrac{1}{j}\n\subset \inter(\Upsilon_{\frac{1}{j}})$ and choose  $N_{j-1}\geq j-1$ such that for all $n\geq N_{j-1}$, 
	\begin{gather*}
	B_{r_\sigma(Q_{\frac{1}{j}},\Tauin_{\frac{1}{j}})}\cup K \subset nK_{j-1}, \\
	u\Bigl(x+\Bigr(S_{j-2}+\frac{n}{j-1}\Bigr)\n, S_{j-2} + \frac{n}{j-1}\Bigr) \geq \sigma,\quad x\in nK_{j-1}.
	\end{gather*}
	Set 
	\[
	S_{j-1}:=S_{j-2}+\frac{N_{j-1}}{j-1}=N_1+\frac{N_2}{2}+ \ldots+ \frac{N_{j-1}}{j-1}\geq j-1.
	\] 
	Then, by Lemma~\ref{lem:conv_to_theta-1}, similarly to the above,
	\begin{equation}\label{eq:asdaw}
			\min_{x\in nK_j} u\Bigl(x+\Bigl(S_{j-1}+\frac{n}{j}\Bigr)\n,S_{j-1}+\frac{n}{j} \Bigr) \to \theta,\quad n\to\infty.
	\end{equation}
	
	Suppose that \eqref{eq:hairtrigger_general} does not hold. Then, for some $\eps>0$, there exist sequences $x_m\in K$, $m\in\N$, and $t_m\to\infty$ as $m\to\infty$, such that 
	\begin{equation}\label{eq:no_hairtrigger}
		u(x_m + t_m\n ,t_m)\leq \theta-\eps. 
	\end{equation}
	Since $u_0\in \Buc$, then by Proposition~\ref{prop:u_in_BUC},  $u\in C_{ub}(\X\times \R_+)$. Thus there exists $\delta=\delta(\eps)$, such that 
	\[
		\vert u(x,t) - u(y,s) \vert < \frac{\eps}{2},\qquad |x-y|<\delta,\ |t-s|<\delta.
	\]
	We choose $j\in\N$ such that $\max\{1,|\n|\} < \delta j$. 
	By \eqref{eq:asdaw}, there exists $N_j'>N_{j-1}$, such that, for all $n\geq N_j'$, we have that $K\subset n K_j$ and 
\[
\min_{x\in K} u\Bigl(x+\Bigl(S_{j-1}+\frac{n}{j}\Bigr)\n,S_{j-1}+\frac{n}{j} \Bigr)>\theta-\frac{\eps}{4}.
\]
Choose $m$, such that $t_m\geq S_{j-1}+\dfrac{N_j'}{j}$. Let $n_m$ be the entire part of $j(t_m-S_{j-1})$. Then
$n_m\geq N_j'$ and, for $q_m:=S_{j-1}+\dfrac{n_m}{j}$, we easily get that 
\[
\max\{1,|\n|\}\,|t_m-q_m|<\delta.
\]
Therefore,
	\begin{align*}
		u(x_m + &t_m\n ,t_m) \geq \min_{x\in K} u(x + t_m\n,t_m) \\
		&\geq \min_{x\in K} u(x+q_m\n,q_m) - \max_{x\in K} \big\vert 
		u(x+q_m\n,q_m) - u(x+t_m\n,t_m)\big\vert \\
			&\geq \min_{x\in K} u(x+q_m\n,q_m) -\frac{\eps}{2}>\theta - \frac{3}{4}\eps,
	\end{align*}
	that contradicts \eqref{eq:no_hairtrigger}. Therefore \eqref{eq:hairtrigger_general} holds and the proof is fulfilled. 
\end{proof}

We are going now to get rid of the assumptions \eqref{hyp:descrete_front_nonempty} and \eqref{eq:init_cond_is_large}  in Proposition~\ref{prop:hairtrigger_general}. We start with the following lemma.

\begin{lem}\label{lem:average_of_jump_gen_is_zero}
	Let $b\in L^1(\R \to \R_+)$ be such that 
\[
\int_\R b(s)\,ds=1,\qquad \int_\R  |s|b(s)ds<\infty, 
\]
and let $v\in L^\infty(\R\to \R_+)$ be a non-increasing function. Then the following limit holds
	\begin{equation}\label{eq:average_of_jump_gen_is_zero}
		\lim_{r\to\infty} \int_{-r}^r  \bigl( (b*v)(s)-v(s) \bigr) ds = \bigl( v(-\infty)-v(\infty) \bigr) \int_\R 
		s\, b(s)\, ds. 
	\end{equation}
\end{lem}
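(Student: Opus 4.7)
The plan is to swap the order of integration via Fubini, reduce to a pointwise limit of a simple one-dimensional quantity, and then apply dominated convergence with the first-moment hypothesis on $b$ providing the integrable majorant.

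First, since $b\in L^1(\R\to\R_+)$ and $v\in L^\infty(\R\to\R_+)$, one has $(b*v)(s)=\int_\R b(y)v(s-y)\,dy$, and all iterated integrals over the bounded strip $[-r,r]$ are absolutely convergent. I would therefore use Fubini to write
\[
\int_{-r}^{r}\bigl((b*v)(s)-v(s)\bigr)\,ds = \int_\R b(y)\,\Delta_r(y)\,dy,
\]
where, after the translation $u=s-y$ and cancelling the common interval of integration,
\[
\Delta_r(y):=\int_{-r}^{r}\bigl(v(s-y)-v(s)\bigr)\,ds = \int_{-r-y}^{-r}v(u)\,du - \int_{r-y}^{r}v(u)\,du
\]
(where integrals with reversed endpoints are interpreted with the usual sign convention, so the formula is valid for every $y\in\R$).

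Next, I would compute the pointwise limit of $\Delta_r(y)$ as $r\to\infty$ for each fixed $y$. Since $v$ is bounded and non-increasing on $\R$, the limits
\[
v(-\infty):=\lim_{u\to-\infty}v(u),\qquad v(\infty):=\lim_{u\to\infty}v(u)
\]
exist and are finite. For fixed $y$, as $r\to\infty$ both endpoints $-r-y,-r$ tend to $-\infty$ and both endpoints $r-y,r$ tend to $+\infty$; hence a standard argument (squeezing by $v(-r-y)$ and $v(-r)$, respectively $v(r-y)$ and $v(r)$, on the intervals of length $|y|$) gives
\[
\int_{-r-y}^{-r}v(u)\,du\longrightarrow v(-\infty)\,y,\qquad \int_{r-y}^{r}v(u)\,du\longrightarrow v(\infty)\,y,
\]
so that $\Delta_r(y)\to\bigl(v(-\infty)-v(\infty)\bigr)\,y$.

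Finally, I would justify passing to the limit under the outer integral. Each of the two integrals defining $\Delta_r(y)$ is bounded in absolute value by $\|v\|_{L^\infty}|y|$, hence $|\Delta_r(y)|\le\|v\|_{L^\infty}|y|$ uniformly in $r>0$. The majorant $\|v\|_{L^\infty}|y|\,b(y)$ belongs to $L^1(\R)$ precisely by the finite-first-moment assumption $\int_\R|s|b(s)\,ds<\infty$, so dominated convergence yields
\[
\lim_{r\to\infty}\int_\R b(y)\Delta_r(y)\,dy = \bigl(v(-\infty)-v(\infty)\bigr)\int_\R y\,b(y)\,dy,
\]
which is the desired identity. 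The main obstacle is simply pinning down the uniform-in-$r$ bound $|\Delta_r(y)|\le\|v\|_{L^\infty}|y|$ that licenses dominated convergence; once that is in place the rest is routine.
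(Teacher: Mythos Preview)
Your argument is correct and follows the same underlying strategy as the paper---Fubini, then dominated convergence with the first-moment majorant---but your execution is cleaner. The paper splits the outer integral into a near part $|y|\le r/2$ and a far part $|y|>r/2$, bounds the far part crudely by $4v(-\infty)\int_{|y|>r/2}|y|b(y)\,dy\to 0$, and only applies dominated convergence on the near part; you observe (correctly) that the bound on $\Delta_r(y)$ holds for \emph{all} $y$, so no splitting is needed. One small slip: each of the two integrals in $\Delta_r(y)$ is bounded by $\|v\|_{L^\infty}|y|$, so their difference is bounded by $2\|v\|_{L^\infty}|y|$, not $\|v\|_{L^\infty}|y|$; this of course does not affect the integrability of the majorant.
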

\begin{proof}
	For $r>0$ and $\varrho:=\dfrac{r}{2}$, we have, by Fubini's theorem,
	\begin{align*}
		&\quad \int_{-r}^r  (b*v)(s) ds - \int_{-r}^r  v(s)ds = \int_{-\infty}^\infty b(y) \int_{-r}^r  v(s-y)dsdy - \int_{-r}^r  v(s) ds \\
		&=\int_{-\infty}^\infty b(y) W_r(y)\,dy=I_1(r) +I_2(r),
	\end{align*}
where
	\begin{gather*}
	W_r(y):= \int_{-r-y}^{r-y} v(s)ds - \int_{-r}^r  v(s)ds , \quad y\in\R,\\
	I_1(r):= \int_{|y|\leq\varrho} b(y) W_r(y)\,dy, \qquad I_2(r):= \int_{|y|>\varrho} b(y) W_r(y)\,dy.
	\end{gather*}

	Clearly, 
	\[
		\sup_{|y|\leq \varrho} b(y) \lvert W_r(y) \rvert\leq 2\|v\|_E |y|b(y) \in L^1(\R).
	\]
	Next, because of the monotonicity of $v$, we have,
\begin{equation}\label{eq:monimplconv}
		 y v(-r)\leq \int_{-r-y}^{-r} v(s)ds\leq  y v(-r-y).
\end{equation}
	Since $-r-y<-\frac{r}{2}$ for $|y|\leq\varrho=\frac{r}{2}$, we have that 
	\[
		\1_{|y|\leq\varrho}\int_{-r-y}^{-r} v(s)ds \to v(-\infty)y,\qquad r\to\infty;
	\]
	and, similarly, $\1_{|y|\leq\varrho}\int_{r-y}^r v(s)ds \to v(\infty)y$.
	Therefore, by the dominated convergence theorem,
	\begin{equation}\label{eq:I1_lim}
		\lim_{r\to\infty} I_1(r) = \bigl(v(-\infty)-v(\infty)\bigr)\int_{|y|\leq\varrho}yb(y)dy.	
	\end{equation}
	On the other hand, 
	\begin{equation}\label{eq:I2_lim}
		|I_2(r)| \leq 2 v(-\infty)\, r \int_{|y|>\varrho} b(y)dy \leq 4 v(-\infty) \,\frac{r}{\varrho} \int_{|y|>\varrho} b(y)|y| dy \to 0,
	\end{equation}
	as $r\to\infty$. Combining \eqref{eq:I1_lim} and \eqref{eq:I2_lim}, one gets the statement.
\end{proof}

The following statement yields sufficient conditions for \eqref{hyp:descrete_front_nonempty}.
\begin{prop}\label{prop:suff_H1}
	Let \eqref{assum:kappa>m}--\eqref{assum:first_moment_finite} hold.
	Let $\Upsilon_t$, $t>0$, be defined by \eqref{eq:TauT}, and $\m$ be defined by \eqref{firstfullmoment}. Then
	\begin{equation}\label{eq:first_moment_in_the_front}
		 t\, \m \in \inter(\Upsilon_t).
	\end{equation}
\end{prop}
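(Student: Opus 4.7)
The plan is to prove the stronger strict pointwise bound
\[
c_t^*(\xi) > t\m\cdot\xi \qquad \text{for every } \xi\in\S.
\]
Since $\Upsilon_t=\{x\in\X:x\cdot\xi\leq c_t^*(\xi) \text{ for all }\xi\in\S\}$ and $c_t^*$ is lower semi-continuous on the compact sphere, the l.s.c.\ function $\xi\mapsto c_t^*(\xi)-t\m\cdot\xi$ would then attain a strictly positive minimum on $\S$, placing a small open ball around $t\m$ inside $\Upsilon_t$, which is exactly \eqref{eq:first_moment_in_the_front}.

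To prove the strict bound, I would fix $\xi\in\S$, set $c:=c_t^*(\xi)$, and analyze the Weinberger fixed point $f:=f_{t,c,\xi}$ of $R_{t,c,\xi}$ given by \eqref{fiteration}--\eqref{eq:limit_func_Weinberger}. By \eqref{jumpfunc}, $f(\infty)=0$; passing to the limit $s\to-\infty$ in $f(s)=\max\{\varphi(s),(Q_tV_{s,c,\xi}f)(0)\}$ and using that the ODE $\dot y=y(\beta-Gy)$ governing $Q_\tau$ on constants satisfies $Q_\tau r>r$ for every $r\in(0,\theta)$ (via \eqref{assum:G_increas_on_const} and Proposition~\ref{prop:Q_def}) rules out $f(-\infty)\in\{0\}\cup(0,\theta)$, forcing $f(-\infty)=\theta$. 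Translation invariance \eqref{assum:G_commute_T} and uniqueness then make $v(x,\tau):=(Q_\tau V_{0,0,\xi}f)(x)$ invariant under translations in $\xi^\perp$, hence of the form $U(x\cdot\xi,\tau)$; by Proposition~\ref{prop:monot_along_vector_sol}, $U(\cdot,\tau)$ is non-increasing on $\R$ for every $\tau$ with limits $\theta$ at $-\infty$ and $0$ at $+\infty$. The function $U:\R\times[0,t]\to[0,\theta]$ solves the one-dimensional reduced equation
\[
\partial_\tau U = \ka(\tilde a*_\R U - U) + U(\beta - \widetilde GU), \qquad U(\cdot,0)=f,
\]
where $\tilde a(\zeta):=\int_{\xi^\perp}a(\zeta\xi+y)\,dy$ is a probability density on $\R$ whose first moment equals $\m\cdot\xi/\ka$ by \eqref{assum:first_moment_finite} and \eqref{firstfullmoment}, and $\widetilde G$ denotes the operator induced on functions of $\eta\in\R$ by $G$ through the planar-wave reduction, satisfying $0\leq \widetilde G U\leq\beta$ by \eqref{assum:Gpositive}.

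The core is to compute $\displaystyle I:=\lim_{r\to\infty}\int_{-r}^r[U(\eta,t)-f(\eta)]\,d\eta$ in two ways. From the PDE, using the Fubini identity
\[
\int_{-r}^r(\tilde a*_\R U - U)(\eta,\tau)\,d\eta = \int_\R \tilde a(\zeta)\Bigl[\int_{-r-\zeta}^{-r}U\,d\eta - \int_{r-\zeta}^{r}U\,d\eta\Bigr]d\zeta
\]
with the uniform bound $|\cdot|\leq 2\theta|\zeta|\in L^1(\tilde a(\zeta)\,d\zeta)$ (finite by \eqref{assum:first_moment_finite}), dominated convergence together with Lemma~\ref{lem:average_of_jump_gen_is_zero} applied to $v=U(\cdot,\tau)$ yields
\[
I=t\theta\,\m\cdot\xi + J, \qquad J:=\int_0^t\!\!\int_\R U(\beta-\widetilde GU)\,d\eta\,d\tau\in[0,\infty],
\]
where the monotone reaction term is handled by MCT. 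On the other hand, since $\varphi\equiv 0$ on $[0,\infty)$, the fixed-point identity gives $f(s)=U(s+c,t)$ exactly for $s\geq 0$ and $f(s)=U(s+c,t)+(\varphi(s)-U(s+c,t))_+$ for $s<0$; a change of variable together with the limits $U(\pm\infty,t)=\theta,0$ then yields
\[
I=c\theta-K, \qquad K:=\int_{-\infty}^0(\varphi(s)-U(s+c,t))_+\,ds\in[0,\infty),
\]
the finiteness of $K$ following from $U(\cdot+c,t)\to\theta>\varphi(-\infty)$ at $-\infty$. Equating the two expressions gives $c=t\m\cdot\xi+(J+K)/\theta$.

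The main obstacle is excluding the degenerate case $J=K=0$. If both vanished, then $U$ would satisfy the linear jump equation $\partial_\tau U=\ka(\tilde a*_\R U - U)$, hence $U(\cdot,t)=\mu_t*_\R f$ where $\mu_t$ is the compound Poisson distribution on $\R$ with intensity $\ka t$ and jump density $\tilde a$; meanwhile $K=0$ would force $U(\cdot,t)=T_cf$, giving the identity $\mu_t*_\R f=T_cf$. Differentiating distributionally and normalizing $\lambda:=-df/\theta$ to a probability measure on $\R$ (which has total mass $\theta$ since $f$ is non-increasing from $\theta$ to $0$) produces $\mu_t*_\R\lambda=T_c\lambda$; taking characteristic functions gives $\hat\mu_t(k)\hat\lambda(k)=e^{ikc}\hat\lambda(k)$, and continuity of $\hat\lambda$ together with $\hat\lambda(0)=1$ forces $\hat\mu_t(k)=e^{ikc}$, equivalently $\hat{\tilde a}(k)=1+ikc/(\ka t)$, on an open interval around $0$. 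This contradicts $|\hat{\tilde a}|\leq 1$ when $c\neq 0$, and forces $\tilde a=\delta_0$ when $c=0$; the latter contradicts \eqref{assum:a_nodeg}, which makes $\tilde a$ strictly positive on $(-\varrho,\varrho)$. Therefore $J+K>0$, completing the proof of $c_t^*(\xi)>t\m\cdot\xi$. The delicate point is precisely this last step: mass balance alone gives only the non-strict bound $c_t^*(\xi)\geq t\m\cdot\xi$, and promoting it to strict inequality essentially uses that both degeneracies $J=0$ (forcing pure-jump linear dynamics) and $K=0$ (forcing a rigid $c$-translation) fail simultaneously whenever the jump kernel is non-degenerate as in \eqref{assum:a_nodeg}.
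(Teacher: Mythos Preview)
Your argument is correct and establishes the same strict inequality $c_t^*(\xi)>t\,\m\cdot\xi$, but the contradiction step is genuinely different from the paper's. The paper fixes $c:=t\,\m\cdot\xi$ (not $c_t^*(\xi)$) and assumes $f_{t,c,\xi}(\infty)=0$; the mass balance---your two computations of $I$, specialised to this $c$---then forces the reaction integral $J$ to vanish. From $J=0$ and strict positivity of $u$ (Proposition~\ref{prop:u_gr_0}, applied after sandwiching with a continuous sub-initial-datum) the paper deduces $(Gu)(s\xi,\tau)\equiv\beta$, in particular $(Gu_0)(s\xi)=\beta$ for all $s$. Then \eqref{assum:G_commute_T} and \eqref{assum:G_locally_continuous} give $(GT_{-p\xi}u_0)(0)=\beta$ for every $p$, while $T_{-p\xi}u_0\locun 0$ forces $G0=\beta$, contradicting \eqref{assum:Gpositive}. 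Your route instead reads $J=K=0$ as the convolution identity $\mu_t*f=T_cf$ and extracts a contradiction from $|\hat{\tilde a}|\le 1$ together with \eqref{assum:a_nodeg}; this bypasses Proposition~\ref{prop:u_gr_0} entirely (since $U(\beta-\widetilde GU)\equiv 0$ already makes the PDE linear, regardless of where $U$ vanishes) and does not invoke \eqref{assum:G_locally_continuous} at the contradiction step. The paper's route is shorter and stays within the structural hypotheses on $G$; yours is more analytic and buys independence from the strict-positivity machinery.

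Two loose ends in your write-up. First, Proposition~\ref{prop:monot_along_vector_sol} yields only monotonicity of $U(\cdot,\tau)$; the limit values $\theta$ and $0$ at $\mp\infty$ for intermediate $\tau\in(0,t)$---which you need to apply Lemma~\ref{lem:average_of_jump_gen_is_zero} inside the time integral---follow from Proposition~\ref{prop:loc_uni} applied to $T_{\pm p\xi}u_0\locun\theta,0$ (the paper leaves this implicit as well). Second, you should dispose of $c_t^*(\xi)=\pm\infty$: the case $+\infty$ is trivial, and if $c_t^*(\xi)=-\infty$ your identity applied at any finite $c$ (say $c=t\,\m\cdot\xi-1$) already gives $c>t\,\m\cdot\xi$, a contradiction.
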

\begin{proof}
	Fix $t>0$. For a $\xi\in\S$, we set
	\begin{equation}\label{eq:setc}
	c:=\ka t\int_{\X} y\!\cdot\!\xi a(y)dy = t\,\m\!\cdot \!\xi\in\R.
	\end{equation}
Let $f_{t,c,\xi}$ be defined by \eqref{eq:limit_func_Weinberger}.
By the definition of $\Upsilon_t$ and \eqref{jumpfunc}, we have that if $f_{t,c,\xi}(\infty)=\theta$ for all $\xi\in\S$, then \eqref{eq:first_moment_in_the_front} holds. Suppose, in contrast, that, for some $\xi\in\S$, $f_{t,c,\xi}(\infty)=0$. Fix such a $\xi$, consider the corresponding $c$ according to \eqref{eq:setc}, and denote $f:=f_{t,c,\xi}$.
Note that, by \cite[Lemma 5.2]{Wei1982a} and the discussion thereafter, $f(-\infty)=\theta$.

We set $u_0(x):= f(x\cdot\xi)$, $x\in\X$, and consider the corresponding solution $u$ to \eqref{eq:basic}.
Then, by \eqref{defofV}, we evidently have 
\[
(V_{s,c,\xi}f)(x)=( T_{-(c+s)\xi}u_0)(x), \quad x\in\X.
\] 
Next, as it was mentioned above, the functions $f_n$ and $f=f_{t,c,\xi}$ in \eqref{eq:limit_func_Weinberger} are monotone, hence the limit in \eqref{eq:limit_func_Weinberger} is locally uniform. 
Therefore, passing $n$ to $\infty$ in \eqref{fiteration}, we will get from \eqref{eq:iterop_by_Weinberger} and Proposition~\ref{prop:Q_def}, that
\begin{align}\notag
f(s)&=\max\bigl\{\varphi(s),(Q_t(V_{s,c,\xi}f))(0) \bigr\}=\max\bigl\{\varphi(s),( T_{-(c+s)\xi}Q_tu_0)(0) \bigr\}\\&=\max\bigl\{\varphi(s),u\bigl((c+s)\xi,t\bigr) \bigr\}.\label{eq:ssadsad}
\end{align}
Since $f$ is non-increasing on $\R$, $u_0$ is non-increasing along $\xi$, cf.~Definition~\ref{def:monotoneindirection}; then, by Proposition~\ref{prop:monot_along_vector_sol}, $u$ also has the same property. As a result, the function
\begin{equation}\label{eq:defofvadd}
v(s):=u\bigl((c+s)\xi,t\bigr), \quad s\in\R
\end{equation}
is non-increasing on $\R$. Next, by our assumptions, $f(-\infty)=\theta>\varphi(-\infty)$ and $f(\infty)=0$; therefore,
 we get from \eqref{eq:ssadsad}, that
 \begin{equation}\label{eq:limit_of_f}
	\lim_{s\to\infty} v(s) = 0,\qquad \lim_{s\to-\infty} v(s) = \theta.
\end{equation}

Next, \eqref{eq:ssadsad} implies that, for each $s\in\R$, cf.~\eqref{eq:RDform},
\begin{align*}
u_0(s\xi)&\geq u\bigl((c+s)\xi,t\bigr)\\&=u_0\bigl((c+s)\xi\bigr)
+\int_0^t \ka\Bigl((a*u)\bigl((c+s)\xi,\tau\bigr)-u\bigl((c+s)\xi,\tau\bigr)\Bigr)d\tau\\&\quad+\int_0^t u\bigl((c+s)\xi,\tau\bigr)\Bigl(\beta- (Gu)\bigl((c+s)\xi,\tau\bigr)\Bigr)d\tau.
\end{align*}
Therefore, for $r>c$,
\begin{align}\notag
0&\geq \ka\int_{-r}^r\int_0^t \Bigl((a*u)\bigl((c+s)\xi,\tau\bigr)-u\bigl((c+s)\xi,\tau\bigr)\Bigr)d\tau\,ds\\&\quad+\int_{-r}^r\int_0^t u\bigl((c+s)\xi,\tau\bigr)\Bigl(\beta- (Gu)\bigl((c+s)\xi,\tau\bigr)\Bigr)d\tau\,ds\notag\\&\quad 
+\int_{-r}^r \Bigl(u_0\bigl((c+s)\xi\bigr)-u_0(s\xi)\Bigr)\,ds=:S_1(r) +S_2(r) + S_3(r).\label{eq:qwqwqwrwrq}
\end{align}

 Note that $u_0$ is constant along any $\eta\in\S$ orthogonal to $\xi$, cf.~Definition~\ref{def:monotoneindirection}; and, by Proposition~\ref{prop:monot_along_vector_sol},
 $u$ has the same property. Namely, for each $s\in\R$ and $\eta\in\S$ orthogonal to $\xi$,
\begin{equation}\label{eq:constalongeta}
	u(x,t) = u(x+s\eta,t),\qquad t\geq0,\ x\in \X.
\end{equation}
For $d\geq2$, choose any $\{\eta_{1},\ \eta_{2},\ ...,\ \eta_{d-1}\}\subset\S $ which form a complement of $\xi\in\S $ to an orthonormal basis in $\X $.
Then 
\begin{align}
	&\quad (a*u)(s\xi,t)=\int_\X a(y)u(s\xi-y,t)dy\notag\\
	&=\int_\X a\biggl(\,\sum_{j=1}^{d-1}y_j\eta_j+y_d\xi\biggr)\,
    u\biggl(-\sum_{j=1}^{d-1}y_j\eta_j+(s-y_d)\xi,t\biggr)\,dy_1\ldots dy_d\notag\\
&=\int_\R\Biggl(\int_{\R^{d-1}}a\biggl(\,\sum_{j=1}^{d-1}y_j\eta_j+y_d\xi\biggr)\,
dy_1\ldots dy_{d-1}\Biggr)u\bigl((s-y_d)\xi,t\bigr)\,dy_d,\label{reducingto1dim}
\end{align}
where we used \eqref{eq:constalongeta} with $\eta=-\sum\limits_{j=1}^{d-1}y_j\eta_j$, which is orthogonal to the $\xi$.
Therefore, one can set
\begin{equation*}
\check{a}(s):=\begin{cases}
	\displaystyle \int_{\R^{d-1}} a\biggl(\,\sum_{j=1}^{d-1}y_j\eta_j+s\xi\biggr)\,dy_1\ldots dy_{d-1}, &d\geq2,\\[3mm]
a^\pm(s\xi), &d=1
\end{cases}
\end{equation*}
for $s\in\R$. We also denote $\check{u}(s,t):= u(s\xi,t)$, $s\in\R$.
Then one can continue \eqref{reducingto1dim}, as follows: $(a*u)(s\xi,t)=(\check{a}*\check{u})(s,t)$, where the convolution in the right-hand side is in $s\in\R$.
Since $\int_\R \check{a}(s)ds=\int_\X  a(y)dy =1$ and \eqref{assum:first_moment_finite} yields 
\[
	\int_\R |s|\check{a}(s)ds=\int_\X |y\cdot \xi| a(y)dy   <\infty,
\]
we may apply Lemma~\ref{lem:average_of_jump_gen_is_zero} with $b = \check{a}$ and  $v$ given by \eqref{eq:defofvadd}. Then, by \eqref{eq:average_of_jump_gen_is_zero}, \eqref{eq:limit_of_f} and the dominated convergence theorem, we have
\begin{align}
	S_1(r) &= \ka \int_0^t \int_{-r}^r  \bigl( (\check{a}*\check{u})(s+c,\tau) - \check{u}(s+c,\tau) \bigr) dsd\tau \notag \\
	&\qquad \to \ka t \theta \int_\R s \check{a}(s)ds = \ka  \theta t \int_\X  y\!\cdot\! \xi a(y)dy =\ka  \theta t\, \xi\!\cdot\!\m, \label{eq:S_1_lim}
\end{align}
as $r\to\infty$. 
Next, by \eqref{eq:limit_of_f}, \eqref{eq:setc}, we have, cf. \eqref{eq:monimplconv}, 
\begin{equation}\label{eq:S_2_lim}
	S_3(r) = \int_r^{r+c} u_0(s\xi) ds - \int_{-r}^{-r+c} u_0(s\xi)ds \to -\theta c = -\theta \ka t \, \xi\!\cdot\!\m ,
\end{equation}
as $r\to\infty$. 
Therefore, combining \eqref{eq:qwqwqwrwrq}, \eqref{eq:S_1_lim}, \eqref{eq:S_2_lim} with the inequality $u(\beta-Gu)\geq 0$, we deduce that 
\begin{equation}\label{eq:S_3_lim}
	 \int_{-\infty}^\infty \int_0^tu\bigl((c+s)\xi,\tau\bigr)\Bigl(\beta- (Gu)\bigl((c+s)\xi,\tau\bigr)\Bigr)d\tau\,ds=\lim_{r\to\infty} S_2(r) =  0.
\end{equation}
Let $w_0\in \Cb$ be such that $0\leq w_0\leq u_0 $ and $w_0\not\equiv 0$. The by Theorem~\ref{thm:compar_pr} and Proposition~\ref{prop:u_gr_0}, we have
\[
	u(x,\tau) \geq w(x,\tau) > 0, \quad x\in\X, \ \tau>0.
\]
Hence \eqref{eq:S_3_lim} is possible if and only if $(Gu)\bigl(s\xi,\tau\bigr)=\beta$ for (a.a.) $s\in\R$ and all $\tau\in[0,t]$; note that $u(\cdot,\tau)$ is continuous in $\tau\geq0$ and $G$ is continuous on $E_\theta^+$ because of~\eqref{assum:Glipschitz}. In particular,  $(Gu_0)\bigl(s\xi\bigr)=\beta$, $s\in\R$. Then we have by \eqref{assum:G_commute_T} that, for any $p>0$, 
\begin{equation}\label{eq:togetcontradiction}
(GT_{-p\xi} u_0)(s\xi)=(T_{-p\xi}G u_0)(s\xi)=(Gu_0)((s+p)\xi)=\beta, \quad s\in\R.
\end{equation}
Since, $(T_{-p\xi} u_0)(x)=f(x\!\,\cdot\,\!\xi+p)$, $x\in\X$, and $f(\infty)=0$, we have that $T_{-p\xi} u_0 \locun 0$, as $p\to\infty$. Then, by \eqref{assum:G_locally_continuous}, \eqref{assum:Gpositive} we get that $GT_{-p\xi} u_0\locun G0=0$, as $p\to\infty$, that contradicts \eqref{eq:togetcontradiction}. The proof is fulfilled.
\end{proof}

Therefore, under assumptions \eqref{assum:kappa>m}--\eqref{assum:first_moment_finite}, one has that \eqref{eq:upsisnotemp} holds for all $T>0$ and, moreover,
\eqref{hyp:descrete_front_nonempty} holds for $\n=\m$ given by \eqref{firstfullmoment}. Now, we are going to get rid of the condition \eqref{eq:init_cond_is_large}.

We find first a useful sub-solution to the linearization of \eqref{eq:basic} around the zero solution, namely
\begin{equation}\label{eq:linear}
	\dfrac{\partial v}{\partial t}(x,t) = \ka (a*v)(x,t) -mv(x,t).
\end{equation}
\begin{prop}\label{prop:subsolutiontolinear}
	Let \eqref{assum:kappa>m}, \eqref{assum:a_nodeg}, \eqref{assum:first_moment_finite} hold  and $\m$ be given by \eqref{firstfullmoment}.
  Then there exists $\alpha_0>0$, such that, for all $\alpha\in(0,\alpha_0)$, there exists $T=T(\alpha)>0$, such that, for all $q>0$, the function
   \begin{equation}\label{greatsubsol}
     w(x,t)=q\exp\biggl(-\frac{|x- t \m   |^2}{\alpha t}\biggr), \quad x\in\X,\  t>T,
\end{equation}
is a sub-solution to \eqref{eq:linear} on $t>T$; i.e., cf. \eqref{Foper}, 
\begin{equation}\label{eq:subsollineshow}
(\widetilde{\mathcal{F}} w)(x,t):=\frac{\partial w(x,t)}{\partial t} - \ka (a*w)(x,t) +mw(x,t)\leq 0
\end{equation}
for all $x\in\X$, $t>T$.
\end{prop}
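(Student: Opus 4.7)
The plan is to reduce the differential inequality to a pointwise one and then handle two regimes of $|x-t\m|$ separately.  Because $\widetilde{\mathcal{F}}$ is linear and $w$ is linear in $q$, I may fix $q=1$. Writing $z:=x-t\m$, direct differentiation and the identity $|x-y-t\m|^2=|z|^2-2z\cdot y+|y|^2$ give
\[
\partial_t w=w\Bigl(\frac{|z|^2}{\alpha t^2}+\frac{2\m\cdot z}{\alpha t}\Bigr),\qquad (a*w)(x,t)=w(x,t)\int_\X a(y)\exp\!\Bigl(\frac{2z\cdot y-|y|^2}{\alpha t}\Bigr)dy,
\]
so that $\widetilde{\mathcal{F}}w=wB$ with
\[
B(z,t)=\frac{|z|^2}{\alpha t^2}+\frac{2\m\cdot z}{\alpha t}+m-\ka\int_\X a(y)\exp\!\Bigl(\frac{2z\cdot y-|y|^2}{\alpha t}\Bigr)dy.
\]
Since $w>0$, it is enough to choose $\alpha_0>0$ and $T(\alpha)$ so that $B(z,t)\le0$ for every $z\in\X$ and every $t>T(\alpha)$ with $\alpha<\alpha_0$.

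In the polynomial regime $|z|\le t\sqrt{\alpha\beta/2}$ I would apply the universal convexity bound $e^u\ge 1+u$. Because $a$ is not assumed to have a finite second moment, I must truncate the integral at radius $R$, using $e^u\ge0$ on $\{|y|>R\}$; on $\{|y|\le R\}$ the first-moment normalization $\ka\int y\,a(y)dy=\m$ cancels the drift $2\m\cdot z/(\alpha t)$ in $B$, while the leftover quadratic integral is tamed by the elementary
\[
\int_{|y|\le R}|y|^2a(y)\,dy\le R\int_\X|y|a(y)\,dy=R|\m|/\ka,
\]
re-expressing the potentially divergent variance as a quantity merely linear in $R$.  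Collecting the tail terms $\ka\int_{|y|>R}a\,dy$ and $|z|\cdot\ka\int_{|y|>R}|y|a\,dy/(\alpha t)$ (both arbitrarily small for $R$ large by \eqref{assum:first_moment_finite}) together with $R|\m|/(\alpha t)$ (small for $t$ large, $R$ fixed), one reaches $B(z,t)\le|z|^2/(\alpha t^2)-\beta+\epsilon$ with $\epsilon$ as small as desired.  Throughout the regime $|z|^2/(\alpha t^2)\le\beta/2$, so $B\le 0$ once $\epsilon<\beta/2$; the ordering "choose $R=R(\alpha)$ first, then $T=T(\alpha,R)$" is essential.

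In the exponential regime $|z|>t\sqrt{\alpha\beta/2}$ I would use \eqref{assum:a_nodeg}: $a\ge\varrho\1_{B_\varrho(0)}$. On a fixed cone $C\subset B_\varrho(0)$ of positive Lebesgue measure aligned with $z$ (e.g.\ the set of $y$ with $y\cdot z/|z|\in[\varrho/2,3\varrho/4]$ and transverse component of length at most $\varrho/4$), one has $2z\cdot y-|y|^2\ge\varrho|z|/2$ as soon as $|z|\ge 5\varrho/4$, giving
\[
\ka\int_\X a(y)\exp\!\Bigl(\frac{2z\cdot y-|y|^2}{\alpha t}\Bigr)dy\ge c_0\,e^{\varrho|z|/(2\alpha t)},
\]
with $c_0>0$ depending only on $d,\varrho,\ka$. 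Setting $s:=|z|/(\alpha t)$, the required $B\le0$ reduces to $s^2\alpha+2|\m|s+m\le c_0 e^{\varrho s/2}$ for $s\ge\sqrt{\beta/(2\alpha)}$. Since the exponential dominates the polynomial once $s$ is sufficiently large, shrinking $\alpha_0$ pushes $\sqrt{\beta/(2\alpha)}$ past the crossover and delivers $B\le 0$ uniformly in this regime.

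The hard part is Regime 1: the absence of a finite second moment on $a$ precludes the usual Taylor expansion to second order, and the proof must route through a truncation combined with the linear-in-$R$ estimate $\int_{|y|\le R}|y|^2 a\le R\int|y|a$, which trades the divergent variance for the available first moment at the cost of a factor $R$. Absorbing that factor against $1/(\alpha t)$ is precisely what pins down the ordering $R$-then-$T$ in the choice of parameters.
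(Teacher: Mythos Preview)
Your two-regime plan is correct and complete, but it differs from the paper's argument. One small slip: $\int_\X|y|a(y)\,dy\neq|\m|/\ka$ in general (the latter is $\bigl|\int ya\bigr|$, not $\int|y|a$); this is harmless since all you need is that the first absolute moment is finite, which is precisely \eqref{assum:first_moment_finite}.

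The paper avoids the case split entirely. After the same reduction to the pointwise inequality (their \eqref{ineqtoprove}, your $B(z,t)\le0$), it bounds the integral from below in one stroke by writing $e^s-1\ge s$ on the half-space $\{x\cdot y<0\}$ and $e^s-1\ge s+s^2/2$ on $\{x\cdot y\ge0\}$, producing simultaneously the linear cancellation of the drift term and a quadratic gain $\frac{2}{\alpha^2t^2}I_2(t)$. The non-degeneracy \eqref{assum:a_nodeg} is then used once, via the same cone idea as your Regime~2, to bound $I_2(t)\ge c|x|^2$. The resulting inequality is a single quadratic in $|x|/(\alpha t)$ and is handled by completing the square; $\alpha_0$ is chosen to make the leading coefficient positive. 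No truncation at radius $R$ is needed because the damping factor $e^{-|y|^2/(\alpha t)}$ is kept intact and dominated convergence does the work. Your approach trades this unified quadratic estimate for a cleaner separation of mechanisms---convexity of $e^u$ in the inner zone, raw exponential growth in the outer zone---at the cost of an extra parameter $R(\alpha)$ and the ordering constraint you correctly flag. Both routes arrive at the same destination with the same hypotheses.
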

The proof is very similar to that in \cite[Proposition 5.19]{FKT2015}.
For reader convenience, we provide the proof in the Appendix.

Now, we will show that \eqref{greatsubsol} is a sub-solution to \eqref{eq:basic} provided that $q$ is small enough.
\begin{prop}\label{prop:subsolution}
	Let \eqref{assum:kappa>m}--\eqref{assum:first_moment_finite} hold  and $\m$ be given by \eqref{firstfullmoment}.
  Then there exists $q_0\in(0,\theta)$ and  $\alpha_0>0$, such that, for all $\alpha\in(0,\alpha_0)$, there exists
$T=T(\alpha)>0$, such that, for all $q\in(0,q_0)$, the function \eqref{greatsubsol} 
is a sub-solution to \eqref{eq:basic} on $t>T$; i.e., cf. \eqref{Foper} and \eqref{eq:subsollineshow}, 
\[
(\mathcal{F}_\theta w)(x,t):=\frac{\partial w(x,t)}{\partial t} - \ka (a*w)(x,t) +mw(x,t) +w(x,t)(Gw)(x,t)\leq 0
\]
for all $x\in\X$, $t>T$.
\end{prop}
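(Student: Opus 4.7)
The natural strategy is to combine Proposition~\ref{prop:subsolutiontolinear} with a pointwise estimate of the extra nonlinear term $w\cdot Gw$ appearing in
\[
\mathcal{F}_\theta w = \widetilde{\mathcal{F}} w + w\cdot Gw.
\]
The obstacle is that, by \eqref{assum:Gpositive}, both $w$ and $Gw$ are non-negative, so $w\cdot Gw\geq 0$ has the ``wrong'' sign and cannot be absorbed using just the plain inequality $\widetilde{\mathcal{F}} w\leq 0$. The plan is therefore to extract a quantitative slack in the linear sub-solution property, and then to make $w\cdot Gw$ smaller than this slack by choosing $q$ small.

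Fix an arbitrary $\eps\in(0,\beta)$ with $\eps<\lt\theta$. Since $\m$ in \eqref{firstfullmoment} depends only on $\ka$ and $a$, and since the assumptions \eqref{assum:kappa>m}, \eqref{assum:a_nodeg}, \eqref{assum:first_moment_finite} persist when $m$ is replaced by $m+\eps$ (the only non-trivial point being $\ka-(m+\eps)=\beta-\eps>0$), Proposition~\ref{prop:subsolutiontolinear} can be re-applied to the linear equation with mortality $m+\eps$ in place of $m$. This produces constants $\alpha_0>0$ and $T=T(\alpha)>0$ such that, for all $\alpha\in(0,\alpha_0)$, $q>0$ and $t>T(\alpha)$,
\[
\frac{\partial w}{\partial t}(x,t) - \ka(a*w)(x,t) + (m+\eps)\,w(x,t) \leq 0,\qquad x\in\X,
\]
which rewrites as $\widetilde{\mathcal{F}} w(x,t) \leq -\eps\, w(x,t)$ for the original $\widetilde{\mathcal{F}}$.

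Set $q_0:=\eps/\lt\in(0,\theta)$. For $q\in(0,q_0)$ one has $0\leq w(\cdot,t)\leq q\leq\theta$, so $w(\cdot,t)\in E_\theta^+$. Since $G0=0$ by \eqref{assum:Gpositive}, the Lipschitz condition \eqref{assum:Glipschitz} yields $\|Gw(\cdot,t)\|_E\leq \lt\|w(\cdot,t)\|_E\leq \lt q$, and hence pointwise $(Gw)(x,t)\leq \lt q$ for all $x\in\X$, $t>T(\alpha)$. Combining this with the slack inequality above,
\[
\mathcal{F}_\theta w(x,t) = \widetilde{\mathcal{F}} w(x,t) + w(x,t)(Gw)(x,t) \leq \bigl(-\eps + \lt q\bigr) w(x,t) \leq 0,
\]
which is the desired conclusion. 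The only delicate point is the choice to invoke Proposition~\ref{prop:subsolutiontolinear} for the inflated mortality $m+\eps$: it uses the strict inequality $\ka>m$ to supply precisely the linear margin needed to absorb the quadratic nonlinearity, and the rest of the argument is routine.
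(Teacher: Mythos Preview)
Your proof is correct and follows essentially the same route as the paper: both arguments bound $Gw$ by $\lt q$ via \eqref{assum:Gpositive}--\eqref{assum:Glipschitz} and then invoke Proposition~\ref{prop:subsolutiontolinear} with the mortality inflated from $m$ to $m+\eps$, so that the resulting linear slack $-\eps\,w$ absorbs the nonlinear term. The paper simply fixes $\eps=\beta/2$ and takes $q_0<\min\{\theta,\beta/(2\lt)\}$, whereas you leave $\eps\in(0,\beta)$ free and set $q_0=\eps/\lt$; this is a cosmetic difference only.
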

\begin{proof}
By \eqref{assum:Gpositive}, \eqref{assum:Glipschitz}, for each $0<q_0<\min\bigl\{\theta, \frac{\beta}{2l_\theta}\bigr\}$ (where, recall, $\beta=\ka-m$), we have that $v\in E_{q_0}^+$ yields $0\leq Gv \leq \frac{\beta}{2}$ .
	Then, for each $q\in(0,q_0)$,
	\[
			\mathcal{F}_\theta w \leq \frac{\partial w}{\partial t} - \ka a*w +\Bigl(m+\frac{\beta}{2}\Bigr)w.
	\]
	Since \eqref{assum:kappa>m} yields $m+\frac{\beta}{2}<\ka$, the statement follows from Proposition~\ref{prop:subsolutiontolinear} applied for \eqref{eq:subsollineshow} with $m$ replaced by $m+\frac{\beta}{2}$.
\end{proof}

The next statement shows that a solution to \eqref{eq:basic} becomes larger than the sub-solution \eqref{greatsubsol} after a big enough time.

\begin{prop}\label{prop:useBrandle}
	Let \eqref{assum:kappa>m}--\eqref{assum:improved_sufficient_for_comparison} hold.
Then, there exists $t_1>0$, such that, for any $t>t_1$ and for any $\tau>0$, there exists $q_1=q_1(t,\tau)>0$, such that the following holds.
If $u_0\in E^+_\theta$ is such that there exist $\eta>0$, $r >0$, $x_0\in\X$ with $u_0(x)\geq\eta$, $x\in B_r (x_0)$ and $u$ is the corresponding solution to \eqref{eq:basic}, then
  	\[
	  	u(x,t)\geq q_1\exp\Bigl(-\frac{|x-x_0|^2}{\tau}\Bigr),\quad x\in\X.
  	\]
\end{prop}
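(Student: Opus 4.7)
By the translation invariance of the equation (assumption~\eqref{assum:G_commute_T}), I would first reduce to the case $x_0=0$: setting $\tilde u(x,t):=u(x+x_0,t)$ gives a solution of \eqref{eq:basic} with initial condition $\tilde u_0\geq \eta\1_{B_r(0)}$, and the target inequality for $u$ at $x$ is equivalent to the same inequality for $\tilde u$ at $x-x_0$. After this reduction the argument splits into a propagation-of-positivity step followed by a comparison with the Gaussian sub-solution of Proposition~\ref{prop:subsolution}.

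Stage 1 (quantitative positivity on a large ball). Since $0\leq Gu\leq\beta$ on $E^+_\theta$ by \eqref{assum:Gpositive}, the solution $u$ satisfies the pointwise differential inequality $\partial_t u\geq \ka a*u-\ka u$, so by Duhamel/comparison
\[
u(x,s)\geq e^{-\ka s}\sum_{n=0}^{\infty}\frac{(\ka s)^n}{n!}\,(a^{*n}*u_0)(x),\qquad x\in\X,\ s>0.
\]
Using the non-degeneracy \eqref{assum:a_nodeg}, $a^{*n}\geq\varrho^{n}(\1_{B_\varrho(0)})^{*n}$, and the $n$-fold self-convolution of an indicator is strictly positive on $B_{(n-1)\varrho}(0)$. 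Combined with $u_0\geq \eta\1_{B_r(0)}$, this yields: for any prescribed $R>0$ and any $s_0>0$, there exists $\eta_0=\eta_0(R,s_0,\eta,r)>0$ with
\[
u(x,s_0)\geq\eta_0,\qquad x\in B_R(0).
\]

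Stage 2 (shifted Gaussian sub-solution). Fix $\alpha\in(0,\alpha_0)$ and let $T=T(\alpha)$, $q_0>0$ be as in Proposition~\ref{prop:subsolution}. By the autonomy of \eqref{eq:basic} and the translation invariance \eqref{assum:G_commute_T}, for any $s\in\R$ and $y\in\X$ the function
\[
\tilde w(x,t'):=q\exp\!\left(-\frac{|x-y-(t'-s)\m|^2}{\alpha(t'-s)}\right)
\]
is a sub-solution of \eqref{eq:basic} on $\{t'>s+T\}$ whenever $q\in(0,q_0)$. Given the target values $t>t_1$ and $\tau>0$ from the statement, I would choose $y:=-(t-s)\m$ and $s:=t-\tau/\alpha$ so that $\tilde w(x,t)=q\exp(-|x|^2/\tau)$ at the final time; the validity range $s+T<t$ reduces to $\tau/\alpha>T$, which is arranged by taking $\alpha=\alpha(\tau)$ small (and, if needed for very small $\tau$, by inserting an intermediate Stage~1 step onto an even smaller scale).

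Stage 3 (initial domination and comparison). Set $s_0:=s+T=t-\tau/\alpha+T$. To apply Theorem~\ref{thm:compar_pr_basic} on $[s_0,t]$ I need the pointwise bound $u(\cdot,s_0)\geq \tilde w(\cdot,s_0)$. On $\X$, $\tilde w(\cdot,s_0)$ is a Gaussian bump of height $q$ centred at $y+T\m$; choosing $R$ large (depending on $\alpha$, $T$, $|\m|$) makes $\tilde w(x,s_0)<\eta_0$ for $x\in B_R(0)$ provided $q\leq\eta_0$, and makes $\tilde w(x,s_0)$ uniformly small outside $B_R(0)$. On $B_R(0)$ Stage~1 supplies the bound $u\geq\eta_0\geq \tilde w$; outside $B_R(0)$ the bound will follow either by iterating Stage~1 to an even larger ball (so the positivity of $u$ exceeds the Gaussian tail at scale $R$), or by combining with the strict positivity of $u(\cdot,s_0)$ from Proposition~\ref{prop:u_gr_0}. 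Having arranged $u(\cdot,s_0)\geq\tilde w(\cdot,s_0)$, the comparison principle gives $u(\cdot,t)\geq\tilde w(\cdot,t)=q\exp(-|\cdot|^2/\tau)$, and one sets $q_1:=q$.

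\textbf{Main obstacle.} The crux is the pointwise initial domination $u(\cdot,s_0)\geq\tilde w(\cdot,s_0)$ on \emph{all} of $\X$: the sub-solution has full support, whereas Stage~1 only produces a \emph{compactly supported} lower bound for $u$. Controlling the Gaussian tails uniformly in $x$ — by enlarging $R$ enough that the tail drops below whatever (possibly non-quantitative) positive bound is available outside $B_R(0)$, and trading this against smallness of $q$ — is the delicate technical point that governs the dependence $q_1=q_1(t,\tau)$ (which, implicitly, also depends on $\eta,r$).
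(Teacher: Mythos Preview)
Your reduction to $x_0=0$ and the Stage~1 differential inequality $\partial_t u\geq \ka a*u-\ka u$ are both fine, and you correctly isolate the real difficulty: the sub-solution $\tilde w$ from Proposition~\ref{prop:subsolution} has full support, while Stage~1 as you state it only gives a lower bound for $u(\cdot,s_0)$ on a compact set. However, your two suggested remedies do not close this gap. Iterating Stage~1 onto a larger ball $B_{R'}$ just pushes the problem to $|x|>R'$; and Proposition~\ref{prop:u_gr_0} gives only \emph{strict} positivity with no quantitative control, so it cannot be traded against smallness of $q$ uniformly in $x$ (recall $u_0$ may vanish identically outside $B_r$, so the only lower bound on $u(x,s_0)$ for large $|x|$ comes from the dynamics).

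What is actually needed --- and what the paper does --- is to squeeze a \emph{global} pointwise lower bound out of your Poisson series, of the form
\[
u(x,t)\;\geq\; c(t)\exp\bigl(-c'|x|\log|x|\bigr),\qquad x\in\X,
\]
for $t$ large. Once you have this, Stages~2--3 become superfluous: since $|x|\log|x|=o(|x|^2)$, this bound directly dominates $q_1\exp(-|x|^2/\tau)$ for $|x|$ large (and for $|x|$ bounded you already have a positive infimum), so a simple algebraic comparison gives the conclusion with $q_1=q_1(t,\tau)$. The paper obtains this global bound by invoking the heat-kernel estimates of Br\"andle--Chasseigne--Ferreira for the operator $v\mapsto j*v-\langle j\rangle v$ with a smooth compactly supported $j$; the decomposition $\partial_t u=\ka j*u-(m+q)u+f$ with $f\geq 0$ is precisely where assumption~\eqref{assum:improved_sufficient_for_comparison} enters (note it appears in the hypotheses of Proposition~\ref{prop:useBrandle} but not of Proposition~\ref{prop:subsolution}). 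Your crude Stage~1 estimate $a^{*n}\geq\varrho^n(\1_{B_\varrho})^{*n}$ together with Stirling on the term $n\sim|x|/\varrho$ would in principle yield the same $\exp(-c'|x|\log|x|)$ decay, but you would then be reproving the cited heat-kernel bound by hand rather than using the Gaussian sub-solution and comparison.
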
 
The proof is, as a matter of fact, the same as that in  \cite[Proposition 5.20]{FKT2015}.	Again, for reader convenience, we provide the proof in the Appendix.

Now we are finally ready to proof Theorems~\ref{thm:ht1myes}, \ref{thm:ht1mno}.

\begin{proof}[Proof of Theorem~\ref{thm:ht1myes}]
As it was mentioned above, one can get the statement, combining Propositions~\ref{prop:hairtrigger_general} and \ref{prop:suff_H1}, provided that \eqref{eq:init_cond_is_large} holds. To get rid of the latter assumption, one can literally follow the proof of \cite[Theorem 5.10]{FKT2015} using the results of 
 Propositions~\ref{prop:subsolution} and~\ref{prop:useBrandle}. 
\end{proof}

\begin{proof}[Proof of Theorem~\ref{thm:ht1mno}]
	Without loss of generality we can assume that $\theta-\theta_n\leq \frac{\theta}{2}$, $n\in\N$. Consider $v_0\in E_{\theta\! /\! 2}^+\cap C^\infty(\X)$, such that  for some $x_0\in \X$, $\delta\in(0,\frac{\theta}{2})$,
\[
	\delta \1_{B_\delta(x_0)}(x) \leq v_0(x) \leq u_0(x),\qquad x\in\X.
\]
Let $u_n(x,0)=v_0(x)$ and $u_n$ solves the following equation
\begin{equation}\label{eq:unbasic}
	\mathcal{F}_{\theta_n}^{(n)} u_n := \frac{\partial u_n}{\partial t} - \ka_n a_n*u_n +u_n G_n u_n + mu_n = 0.
\end{equation}
Therefore by \eqref{assum:approx_of_basic} we obtain, 
\[
		\mathcal{F}_{\theta_n}^{(n)} u_n= 0 = \mathcal{F}_\theta u \leq \mathcal{F}_{\theta_n}^{(n)} u.
\]
Hence by Theorem~\ref{thm:compar_pr} applied to $\mathcal{F}_{\theta_n}^{(n)}$, we obtain
\[
	0 \leq u_n(x,t) \leq u(x,t) \leq \theta.
\]
Applying Theorem~\ref{thm:ht1myes} to the equation~\eqref{eq:unbasic}, we have
\begin{align*}
	\theta-\frac{1}{n} \leq \theta_n =&  \lim_{t\to\infty} \essinf_{x\in K} u_n(x+t\m_n,t)   \leq \liminf_{t\to\infty} \essinf_{x\in K} u(x+t\m_n,t)\\ 
 \leq& \limsup_{t\to\infty} \essinf_{x\in K} u(x+t\m_n,t) \leq \theta,
\end{align*}
that fulfills the proof.
\end{proof}

\section*{Acknowledgments}
Authors gratefully acknowledge the financial support by the DFG through CRC 701 ``Stochastic
Dynamics: Mathematical Theory and Applications'' (DF and PT), the European
Commission under the project STREVCOMS PIRSES-2013-612669 (DF), and the ``Bielefeld Young Researchers'' Fund through the Funding Line Postdocs: ``Career Bridge Doctorate\,--\,Postdoc'' (PT).

\section*{Appendix}
\begin{proof}[Proof of Proposition~\ref{prop:subsolutiontolinear}]	
For $q>0$, consider the function \eqref{greatsubsol}. By \eqref{eq:subsollineshow}, one gets
\[
 (\widetilde{\mathcal{F}}w)(x,t)=w(x,t)\biggl(\frac{|x|^2}{\alpha t^2}-\frac{|\m   |^2}{\alpha}\biggr) - \ka (a*w)(x,t)+mw(x,t).
\]
Therefore, to have $\widetilde{\mathcal{F}}w\leq 0$, it is enough to claim that, for all $x\in\X$,
\[
m+\frac{|x|^2}{\alpha t^2}-\frac{|\m   |^2}{\alpha}\leq
\ka \exp\biggl(\frac{|x-t \m   |^2}{\alpha t}\biggr)
\int_\X a(y)\exp\biggl(-\frac{|x-y-t \m   |^2}{\alpha t}\biggr)\,dy.
\]
By changing $x$ onto $x+t \m   $ and a simplification, one gets an equivalent inequality
\begin{equation}\label{ineqtoprove}
 m+ \frac{|x|^2}{\alpha t^2}+\frac{2\, x\cdot\m   }{\alpha t}\leq\ka 
\int_\X a(y)\exp\biggl(\frac{2x\cdot y}{\alpha t}\biggr)\exp\biggl(-\frac{|y|^2}{\alpha t}\biggr)\,dy=:I(t).
\end{equation}
One can rewrite $I(t)=I_0(t)+I^+(t)+I^-(t)$, where
\begin{gather*}
I_0(t):=\ka \int_\X a(y)e^{-\frac{|y|^2}{\alpha t}}dy;\qquad\quad
I^+(t):=\ka \int_{x\cdot y\geq0} a(y)e^{-\frac{|y|^2}{\alpha t}}\Bigl(e^{\frac{2x\cdot y}{\alpha t}}-1\Bigr)dy;\\
I^-(t):=\ka \int_{x\cdot y<0} a(y)e^{-\frac{|y|^2}{\alpha t}}\Bigl(e^{\frac{2x\cdot y}{\alpha t}}-1\Bigr)dy.
\end{gather*}
Using that $e^s-1\geq s$, for all $s\in\R$, and $e^s-1\geq s+\frac{s^2}{2}$, for all $s\geq0$, one gets the following estimates
\begin{align*}
I^+(t)&\geq  \frac{2\ka}{\alpha t}\int_{x\cdot y\geq0} a(y)e^{-\frac{|y|^2}{\alpha t}}(x\cdot y)dy+ \frac{2\ka}{\alpha^2 t^2}\int_{x\cdot y\geq0} a(y)e^{-\frac{|y|^2}{\alpha t}}(x\cdot y)^2dy,\\
I^-(t)&\geq\frac{2\ka}{\alpha t}\int_{x\cdot y<0} a(y)e^{-\frac{|y|^2}{\alpha t}}(x\cdot y)dy.
\end{align*}
Therefore,
\begin{align}
I(t)&\geq I_0(t) + \frac{2}{\alpha t}\, x\cdot I_1(t) +\frac{2}{\alpha^2 t^2} I_2(t),\label{estIfrombelow}\\
\shortintertext{where}
I_1(t)&:=\ka\int_{\X} a(y)e^{-\frac{|y|^2}{\alpha t}} y \,dy\in\X,\notag\\
I_2(t)&:=\ka\int_{x\cdot y\geq0} a(y)e^{-\frac{|y|^2}{\alpha t}}(x\cdot y)^2dy\in\R.\notag
\end{align}

By \eqref{assum:first_moment_finite}, \eqref{firstfullmoment}, and the dominated convergence theorem, 
we will get that $I_0(t)\nearrow \ka >m$ and  $I_1(t)\to\m \in\X$ as $t\to\infty$. 
Therefore, for any $\eps>0$ with $m+2\eps<\ka$,  there exists $T_1=T_1(\eps)>0$, such that, for all $\alpha>0$ and $t>0$ with $\alpha t>T_1$, one has
\begin{equation}\label{estlim}
\ka\geq I_0(t)>m+\eps, \qquad \lvert I_1(t)-\m   \rvert<\eps.
\end{equation}

Let $T>\frac{T_1}{\alpha}$ be chosen later. The function $I_2(t)$ is also increasing in $t>0$.
Therefore, by \eqref{estIfrombelow} and \eqref{estlim}, one gets, for $t>T>\frac{T_1}{\alpha}$,
\begin{align}
I(t)&>m+\eps +\frac{2}{\alpha t}x\cdot(I_1(t)-\m   )+\frac{2}{\alpha t}x\cdot\m
+\frac{2}{\alpha^2 t^2}I_2(t) \notag\\
& \geq m+\eps -\frac{2\eps}{\alpha t}|x|+\frac{2}{\alpha t}x\cdot\m
+\frac{2}{\alpha^2 t^2}I_2(T).\label{ineqenoughtoprove2}
\end{align}

Let $\varrho>0$ be as in \eqref{assum:a_nodeg}. For an arbitrary $x\in\X$, consider the set
\[
B_x=\Bigl\{y\in\X \Bigm\vert |y|\leq \varrho, \frac{1}{2}\leq\frac{x\cdot y}{|x||y|}\leq 1\Bigr\}.
\]
Then
\begin{equation}\label{weneedit}
I_2(T)\geq \frac{\ka\varrho}{4}|x|^2 \int_{B_x} |y|^2 e^{-\frac{|y|^2}{\alpha T}} dy.
\end{equation}
The set $B_x$ is a cone inside the ball $B_\varrho(0)$, with the apex at the origin, the height which lies along $x$, and the apex angle $2\pi/3$. Since the function inside the integral in the right-hand side of \eqref{weneedit} is radially symmetric, the integral does not depend on $x$. Fix an arbitrary $\bar{x}\in\X$ and denote
\begin{equation}\label{Alimit}
  A(\tau)=A(\tau,\varrho)=\int_{B_{\bar{x}}} |y|^2 e^{-\frac{|y|^2}{\tau}} dy\nearrow\int_{B_{\bar{x}}} |y|^2 dy=:\bar B_\varrho, \quad \tau\to\infty.
\end{equation}
Then, by \eqref{ineqenoughtoprove2} and \eqref{weneedit}, one has, for $t>T$,
\begin{equation}
I(t)> m+\eps -\frac{2\eps}{\alpha t}|x|+\frac{2}{\alpha t}x\cdot\m
+\frac{\ka\varrho A(\alpha T)}{2\alpha^2 t^2}|x|^2 .\label{ineqenoughtoprove3}
\end{equation}
By \eqref{ineqenoughtoprove3}, to prove \eqref{ineqtoprove}, it is enough to show that
\begin{equation*}
  \eps -\frac{2\eps}{\alpha t}|x|
+\frac{\ka\varrho A(\alpha T)}{2\alpha^2 t^2}|x|^2\geq  \frac{|x|^2}{\alpha t^2}, \qquad t>T, \ x\in\X,
\end{equation*}
or, equivalently, for $2\alpha<\ka\varrho  A(\alpha T)$,
\begin{multline}\label{ineqenoughtoprove}
\biggl(\sqrt{\frac{\ka\varrho A(\alpha T)-2\alpha}{2 }}\frac{|x|}{\alpha t}-
 \eps\sqrt{\frac{2}{\ka\varrho A(\alpha T)-2\alpha}}\biggr)^2\\+\eps-\eps^2\frac{2}{\ka\varrho A(\alpha T)-2\alpha}\geq0.
\end{multline}
To get \eqref{ineqenoughtoprove}, we proceed as follows. For a given $\varrho>0$ which provides \eqref{assum:a_nodeg}, we set $\alpha_0:=\frac{1}{2}\ka\varrho\bar{B}_\varrho$, cf.~\eqref{Alimit}.
Then, for any $\alpha\in(0,\alpha_0)$, there exists $T_2=T_2(\alpha)>0$, such that
\[
2\alpha<\ka\varrho A(\alpha T_2)<\ka\varrho\bar{B}_\delta.
\]
Choose now $\eps=\eps(\alpha)>0$, such that $m+2\eps<\ka$ and
\begin{equation}\label{smalleps}
  \eps<\frac{1}{2}(\ka\varrho A(\alpha T_2)-2\alpha)<
\frac{1}{2}(\ka\varrho A(\alpha T)-2\alpha), \quad T>T_2.
\end{equation}
Then, find $T_1=T_1(\alpha)>0$ which gives \eqref{estlim} for $\alpha t>T_1$; and, finally, take $T=T(\alpha)>T_2$ such that $\alpha T> T_1$. As a result, for $t>T$, one has $\alpha t>\alpha T>T_1$, thus
\eqref{estlim} holds, whereas \eqref{smalleps} yields \eqref{ineqenoughtoprove}. The latter inequality gives \eqref{ineqtoprove}, and hence, for all $q>0$, $\mathcal{F}w\leq 0$, for $w$ given by \eqref{greatsubsol}. The statement is proved.
\end{proof}

	\begin{proof}[Proof of Proposition~\ref{prop:useBrandle}]
	By (Q2) in Proposition \ref{prop:Q_def}, it is enough to prove the statement for $x_0=0$.
	Consider arbitrary functions $j,v_0\in C^\infty(\X)$, such that
 	\begin{align*}
 		\supp j =B_\delta(0), &\qquad 0 < j(x)=j(|x|) \leq \delta, && x\in \inter(B_\delta(0));\\
 		\supp v_0=B_r(0), &\qquad 0 < v_0(x) \leq \eta, && x\in \inter(B_r(0));\\
 		\exists \, 0<p<\min\{r,1\},\, 0<\nu<\eta, &\qquad \text{such that} \ v_0(x)\geq \nu, && x\in B_{p }(0),
 	\end{align*}
where $\delta$ is the same as in \eqref{assum:improved_sufficient_for_comparison}.
We choose $p$ and $b$ as in \eqref{assum:improved_sufficient_for_comparison}. Then one can rewrite \eqref{eq:basic} as follows
	\begin{align*}
		\frac{\partial}{\partial t} u(x,t) = \ka (j*u)(x,t)-(m+q)u(x,t)+f(x,t),
	\end{align*}
	where, for all $x\in\X$ and $t\geq0$,
	\begin{align*}
		f(x,t)&:= \ka ((a-j)*u)(x,t)-u(x,t)(Gu)(x,t) + qu(x,t) \geq 0,
	\end{align*}
	because of \eqref{assum:improved_sufficient_for_comparison}.
	Since $j\geq0$ and $Ju=j*u$ defines a bounded operator on $L^\infty(\X)$, one has that $e^{t J}f(x,s)\geq0$, for all $t,s\geq0$, $x\in\X$.
	By the same argument, $u_0(x)\geq \eta \1_{B_r(0)}(x)\geq v_0(x)\geq0$ implies
$(e^{t J}u_0)(x)\geq (e^{t J}v_0)(x)$.
Therefore,
\begin{align}\notag
u(x,t)&=e^{-t(m+q)}(e^{tJ}u_0)(x)+\int_0^t e^{-(t-s)(m+q)}(e^{(t-s)J}f)(x,s)ds\\
&\geq e^{-t(m+q)}(e^{t J}u_0)(x)
\geq e^{-(m+q-\langle j\rangle)t}(e^{tL_j}v_0)(x),\quad x\in\X.\label{estbelow1}
\end{align}
where $\langle j\rangle:=\int_\X j(x)\,dx>0$ and $L_j u=Ju-\langle j\rangle u$.

We are going to apply now the results of \cite{BCF2011}. To do this, set $\alpha:=\langle j\rangle ^{-1}$. Then,
\begin{equation}\label{asfsaffsa}
(e^{tL_j}v_0)(x)=(e^{\langle j\rangle t(\alpha L_j)}v_0)(x)=v(x,\langle j\rangle t),
\end{equation}
where $v$ solves the differential equation $\frac{d}{dt}v=\alpha L_j$.
Since $\int_\X \alpha j(x)\,dx=1$, then, by \cite[Theorem~1.4, Lemma~1.6]{AMRT2010},
\begin{equation}\label{soltolinear}
  v(x,t)=e^{-t}v_0(x)+(w*v_0)(x,t),
\end{equation}
where $w(x,t)$ is a smooth function. Moreover, by \cite[Proposition 5.1]{BCF2011}, for any $\omega\in (0,\delta)$ there exist $c_1=c_1(\omega)>0$ and $c_2=c_2(\omega)\in\R$, such that
\begin{equation}\label{lowbound}
\begin{split}
w(x,t)&\geq h(x,t), \quad x\in\X, t\geq0,\\
h(x,t)&:=c_1t\exp\Bigl(-t-\frac{1}{\omega}|x|\log|x|+(\log t-c_2)\Bigl[\frac{|x|}{\omega}\Bigr]\Bigr).
\end{split}
\end{equation}
Here $[\alpha]$ means the entire part of an $\alpha\in\R$, and $0\log 0:=1$, $\log 0:=-\infty$.

Set $t_1=e^{c_2}>0$. Since $[\alpha]>\alpha-1$, $\alpha\in\R$, one has, for $t>t_1$,
\[
h(x,t)\geq c_1e^{c_2}\exp\Bigl(-t-\frac{1}{\omega}|x|\log|x|+(\log t-c_2)\frac{|x|}{\omega}\Bigr)\geq c_3g(x,t),
\]
where $c_3=c_1e^{c_2}>0$ and
\[
g(x,t):=\exp\Bigl(-t-\frac{1}{\omega}|x|\log|x|\Bigr), \quad x\in\X, t>t_1.
\]
Since $v_0\geq\nu\1_{B_{p }(0)}$, one gets from \eqref{soltolinear} and \eqref{lowbound}, that
\begin{equation}\label{aswgddsye}
   v(x,t)\geq \nu e^{-t}\1_{B_{p }(0)}(x)+\nu c_3\int_{B_{p }(x)}g(y,t)\,dy
\end{equation}
Set $V_p :=\int_{B_p(0)}\,dx$. For any fixed $t>t_1$, since $g(\cdot,t)\in C(B_p(x))$, there exists $y_0,y_1\in B_p(x)$, such that $g(y,t)$ attains its minimal and maximal values on $B_p(x)$ at these points, respectively. Since $B_p(x)$ is a convex set, one gets that, for any $\gamma\in(0,1)$, $y_\gamma:=\gamma y_1+(1-\gamma)y_0\in B_p(x)$. Then
\[
V_p g(y_0,t)\leq \int_{B_{p }(x)}g(y_\gamma,t)\,dy\leq V_p g(y_1,t).
\]
Therefore, by the intermediate value theorem there exists, $\tilde{y}_t=\tilde{y}(x,t)\in B_p (x)$, $t>t_1$, $x\in\X$, such that $\int_{B_p(x)}g(y,t)\,dy=V_p  g(\tilde{y}_t,t)$. Hence one gets from \eqref{estbelow1}, \eqref{asfsaffsa}, \eqref{aswgddsye}, that
  \begin{align}
  u(x,t)&\geq c_4 e^{-(m+q-\langle j\rangle)t}g\bigl(\tilde{y}_t,\langle j\rangle t\bigr)\notag\\
  &=c_4
  \exp\Bigl(-(m+q)t-\frac{1}{\omega}|\tilde{y}_t|\log|\tilde{y}_t|\Bigr),\label{estbelow2}
  \end{align}
for $\tilde{y}_t=\tilde{y}(x,t)\in B_p (x)$, $t>t_1$; here $c_4=c_3 \nu V_p>0$.

As a result, to get the statement, it is enough to show that, for any $t>t_1$ and for any $\tau>0$, there exists $q_1=q_1(t,\tau)>0$, such that the r.h.s. of \eqref{estbelow2} is estimated from below by $q_1 e^{-\frac{|x|^2}{\tau}}$, i.e. that
\begin{equation}\label{needtoproveineq2}
(m+q)t+\frac{1}{\omega}|\tilde{y}_t|\log|\tilde{y}_t|-\log c_4\leq \frac{|x|^2}{\tau}-\log q_1, \quad x\in\X,
\end{equation}
Note that $\tilde{y}_t\in B_p (x)$ implies $|\tilde{y}_t|\leq p+|x|$, $x\in\X$.

Let $p+|x|\leq 1$. Then $\log |\tilde{y}_t|\leq 0$, and the l.h.s. of \eqref{needtoproveineq2} is majorized by $(m+q)t-\log c_4$. Therefore, to get \eqref{needtoproveineq2}, it is enough to have $q_1< c_4e^{-(m+q)t}$, regardless~of~$\tau$.

Let now $|x|+p >1$. Recall that we chose $p<1$. The function $s\log s$ is increasing on $s>1$. Hence to get \eqref{needtoproveineq2}, we claim
\begin{equation}\label{sasfaasftewtew}
  (|x|+1)\log(|x|+1)\leq \frac{\omega}{\tau}|x|^2-\omega (m+q)t+\omega\log c_4-\omega\log q_1.
\end{equation}
Consider now the function $f(s)=as^2-(s+1)\log(s+1)$, $s\geq0$, $a=\frac{\omega}{\tau}>0$. Then $f(0)=0$, $f'(s)=2as-\log(s+1)-1$, $f'(0)=-1$, $f''(s)=2a-\frac{1}{s+1}$. Since $f''(s)\nearrow 2a>0$, $s\to\infty$, there exists $s_0>0$, such that $f''(s)>0$, for all $s>s_0$, i.e. $f'(s)$ increases on $s>s_0$. Since $f'(s)\to\infty$, $s\to\infty$, there exists $s_1>s_0$, such that $f'(s)>0$, for all $s>s_1$, i.e. $f$ is increasing on $s>s_1$. Finally, for any $t>t_1$, one can choose $q_1=q_1(t,\tau)>0$ small enough, to get
\[
\min\limits_{s\in[0,s_1]}f(s)-\omega (m+q)t+\omega\log c_4-\omega\log q_1>0
\]
and to fulfill \eqref{sasfaasftewtew}, for all $x\in\X$.
The statement is proved.
\end{proof}

\def\cprime{$'$}


\begin{thebibliography}{10}

\bibitem{AGT2012}
M.~Aguerrea, C.~Gomez, and S.~Trofimchuk.
  \href{http://dx.doi.org/10.1007/s00208-011-0722-8}{On uniqueness of
  semi-wavefronts}.
  {\em Math. Ann.}, 354(1):73--109, 2012.

\bibitem{Alf2016}
M.~Alfaro.
\href{http://dx.doi.org/10.1016/j.anihpc.2016.10.005}{Fujita blow up phenomena and hair trigger effect: The role of dispersal tails.} {\em Ann. Inst. H. Poincar\'{e} Anal. Non Lin\'{e}aire},  34(5):1309--1327, 2017.

\bibitem{AC2016}
M.~Alfaro and J.~Coville. \href{http://dx.doi.org/10.1016/j.jde.2017.06.035}{Propagation phenomena in monostable integro-differential equations: {A}cceleration or not?}
{\em J. Differential Equations}, 263(9):5727--5758, 2017.

\bibitem{AMRT2010}
F.~Andreu-Vaillo, J.~M. Maz{\'o}n, J.~D. Rossi, and J.~J. Toledo-Melero.
  {\em Nonlocal diffusion problems}, volume 165 of {\em Mathematical
  Surveys and Monographs}.
  AMS Providence, RI, 2010.
  xvi+256 pp.

\bibitem{Aro1977}
D.~G. Aronson.
  The asymptotic speed of propagation of a simple epidemic.
  In {\em Nonlinear diffusion ({NSF}-{CBMS} {R}egional {C}onf.
  {N}onlinear {D}iffusion {E}quations, {U}niv. {H}ouston, {H}ouston, {T}ex.,
  1976)}, p.~1--23. Res. Notes Math., No.~14. Pitman, London, 1977.

\bibitem{AW1975}
D.~G. Aronson and H.~F. Weinberger.
  Nonlinear diffusion in population genetics, combustion, and nerve
  pulse propagation.
  In {\em Partial differential equations and related topics ({P}rogram,
  {T}ulane {U}niv., {N}ew {O}rleans, {L}a., 1974)}, vol.~446 of {\em Lecture
  Notes in Math.}, p.~5--49. Springer, Berlin, 1975.

\bibitem{AW1978}
D.~G. Aronson and H.~F. Weinberger.
  \href{http://dx.doi.org/10.1016/0001-8708(78)90130-5}{Multidimensional
  nonlinear diffusion arising in population genetics}.
  {\em Adv. in Math.}, 30(1):33--76, 1978.

\bibitem{Ayd2018}
O. Aydogmus.
\href{http://doi.org/10.1007/s11538-017-0373-3}{Phase transitions in a logistic metapopulation model
              with nonlocal interactions}.
{\em Bull. Math. Biol.}, 80(1):228--253, 2018.

\bibitem{BCF2011}
C.~Br{\"a}ndle, E.~Chasseigne, and R.~Ferreira.
 \href{http://www.aimsciences.org/journals/displayArticles.jsp?paperID=6232}{Unbounded solutions of the nonlocal heat equation}.
 {\em Commun. Pure Appl. Anal.}, 10(6):1663--1686, 2011.

\bibitem{BCV2016}
H.~Berestycki, J.~Coville, and H.-H. Vo.
  \href{http://dx.doi.org/10.1007/s00285-015-0911-2}{Persistence
  criteria for populations with non-local dispersion}.
  {\em J. Math. Biol.}, 72(7):1693--1745, 2016.

\bibitem{BP1997}
B.~Bolker and S.~W. Pacala.
  \href{http://dx.doi.org/10.1006/tpbi.1997.1331}{Using moment
  equations to understand stochastically driven spatial pattern formation in
  ecological systems}.
  {\em Theor. Popul. Biol.}, 52(3):179--197, 1997.

 \bibitem{BP1999}
B.~Bolker and S.~W. Pacala.
\href{http://www.jstor.org/stable/10.1086/303199}{Spatial moment equations for plant competitions: Understanding
  spatial strategies and the advantages of short dispersal}.
 {\em American Naturalist}, 153(6): 575--602, 1999.

\bibitem{BCGR2014}
O.~Bonnefon, J.~Coville, J.~Garnier, and L.~Roques.
  \href{http://dx.doi.org/10.3934/dcdsb.2014.19.3057}{Inside dynamics
  of solutions of integro-differential equations}.
  {\em Discrete Contin. Dyn. Syst. Ser. B},
  19(10):3057--3085, 2014.



\bibitem{CLMH2003}
J. Clark, M. Lewis, J. McLachlan, and J. HilleRisLambers.
  \href{http://dx.doi.org/10.1890/01-0618}{Estimating population spread: what can we forecast and how well?} {\em Ecology}, 84(8):1979--1988, 2003.

\bibitem{CSKMH1999}
J. Clark, M. Silman, R. Kern, E. Macklin and J. HilleRisLambers.
\href{http://dx.doi.org/10.1890/0012-9658(1999)080[1475:SDNAFP]2.0.CO;2}{Seed dispersal near and far: patterns across temperate and tropical forests}. {\em Ecology}, 80(5):1475--1494, 1999.

\bibitem{BCGR2014}
R. Cousens, C. Dytham, and R. Law. {\em Dispersal in Plants: A Population Perspective}.
Oxford:  Oxford University Press, 2008. 240 pp.

\bibitem{Cov2007}
J.~Coville.
  Maximum principles, sliding techniques and applications to nonlocal
  equations.
  {\em Electron. J. Diff. Eqns.}, 68:1--23, 2007.

\bibitem{CDM2008}
J.~Coville, J.~D{\'a}vila, and S.~Mart{\'{\i}}nez.
  \href{http://dx.doi.org/10.1016/j.jde.2007.11.002}{Nonlocal
  anisotropic dispersal with monostable nonlinearity}.
  {\em J. Differential Equations}, 244(12):3080--3118, 2008.

\bibitem{CDM2013}
J.~Coville, J.~D{\'a}vila, and S.~Mart{\'{\i}}nez.
  \href{http://dx.doi.org/10.1016/j.anihpc.2012.07.005}{Pulsating
  fronts for nonlocal dispersion and {KPP} nonlinearity}.
  {\em Ann. Inst. H. Poincar\'e Anal. Non Lin\'eaire}, 30(2):179--223, 2013.

\bibitem{CD2007}
J.~Coville and L.~Dupaigne.
  \href{http://dx.doi.org/10.1017/S0308210504000721}{On a non-local
  equation arising in population dynamics}.
  {\em Proc. Roy. Soc. Edinburgh Sect. A}, 137(4):727--755, 2007.

\bibitem{CIL1992}
M.~G. Crandall, H.~Ishii, and P.-L. Lions.
  \href{http://dx.doi.org/10.1090/S0273-0979-1992-00266-5}{User's guide
  to viscosity solutions of second order partial differential equations}.
  {\em Bull. Amer. Math. Soc. (N.S.)}, 27(1):1--67,
  1992.

\bibitem{DD2003}
E.~N. Dancer and Y.~Du.
  \href{http://dx.doi.org/10.1090/S0002-9939-02-06733-3}{Some remarks
  on {L}iouville type results for quasilinear elliptic equations}.
  {\em Proc. Amer. Math. Soc.}, 131(6):1891--1899,
  2003.

\bibitem{DL2000}
U.~Dieckmann and R.~Law.
Relaxation projections and the method of moments.
In {\em The Geometry of Ecological Interactions}, pp. 412--455.
  Cambridge University Press, Cambridge, UK, 2000.

\bibitem{Die1978a}
O.~Diekmann.
  On a nonlinear integral equation arising in mathematical
  epidemiology.
  In {\em Differential equations and applications ({P}roc. {T}hird
  {S}cheveningen {C}onf., {S}cheveningen, 1977)}, vol.~31 of {\em
  North-Holland Math. Stud.}, p.~133--140. North-Holland, Amsterdam-New
  York, 1978.

\bibitem{Die1978}
O.~Diekmann.
  \href{http://dx.doi.org/10.1007/BF02450783}{Thresholds and travelling
  waves for the geographical spread of infection}.
  {\em J. Math. Biol.}, 6(2):109--130, 1978.

\bibitem{DH2010}
Y. Du  and S.-B. Hsu.
\href{http://doi.org/10.1137/090775105}{On a nonlocal reaction-diffusion problem arising from the modeling of phytoplankton growth}. {\em SIAM J. Math. Anal.}, 42(3):1305--1333, 2010.

\bibitem{Dur1988}
R.~Durrett.
  Crabgrass, measles and gypsy moths: An introduction to modern
  probability.
  {\em Bulletin (New Series) of the American Mathematical Society},
  18(2):117--143, 1988.

\bibitem{Fif1979}
P.~C. Fife.
  {\em Mathematical aspects of reacting and diffusing systems},
  volume~28 of {\em Lecture Notes in Biomathematics}.
  Springer-Verlag, Berlin-New York, 1979.
  ISBN 3-540-09117-3.
  iv+185 pp.

\bibitem{FKK2011a}
D.~Finkelshtein, Y.~Kondratiev, and O.~Kutoviy.
  \href{http://dx.doi.org/doi:10.1016/j.jfa.2011.11.005}{Semigroup
  approach to birth-and-death stochastic dynamics in~continuum}.
  {\em J.~Funct. Anal.}, 262(3):1274--1308, 2012.

\bibitem{FKT2015}
D.~Finkelshtein, Y.~Kondratiev, and P.~Tkachov.
  Traveling waves and long-time behavior in a doubly nonlocal
  {F}isher--{KPP} equation.
  arXiv:1508.02215, 2015.

\bibitem{FKT2016}
D.~Finkelshtein, Y.~Kondratiev, and P.~Tkachov.
  Accelerated front propagation for monostable equations with nonlocal
  diffusion.
  arXiv:1611.09329, 2016.

 \bibitem{FT2017c}
D.~Finkelshtein and P.~Tkachov.
  \href{http://dx.doi.org/10.1080/00036811.2017.1400537}{Accelerated nonlocal nonsymmetric dispersion for monostable equations on the real line}. {\em Applicable Analysis}, doi: 10.1080/00036811.2017.1400537, 2017.

 \bibitem{FT2017b}
D.~Finkelshtein and P.~Tkachov.
Kesten's bound for sub-exponential densities on the real line and its multi-dimensional analogues. {\em To appear in: Advances in Applied Probability}, 50.2 (June 2018). 

\bibitem{Fis1937}
R.~Fisher.
  The wave of advance of advantageous genes.
  {\em Ann. Eugenics}, 7:335--369, 1937.

\bibitem{FM2004}
N.~Fournier and S.~M{\'e}l{\'e}ard.
  \href{http://dx.doi.org/10.1214/105051604000000882}{A microscopic
  probabilistic description of a locally regulated population and macroscopic
  approximations}.
  {\em The Annals of Applied Probability}, 1(4):1880--1919, 2004.

\bibitem{Gar2011}
J.~Garnier.
  \href{http://dx.doi.org/10.1137/10080693X}{Accelerating solutions in
  integro-differential equations}.
  {\em SIAM Journal on Mathematical Analysis}, 43(4):1955--1974, 2011.

\bibitem{GGIS1991}
Y.~Giga, S.~Goto, H.~Ishii, and M.-H. Sato.
  \href{http://dx.doi.org/10.1512/iumj.1991.40.40023}{Comparison
  principle and convexity preserving properties for singular degenerate
  parabolic equations on unbounded domains}.
  {\em Indiana Univ. Math. J.}, 40(2):443--470,
  1991.

\bibitem{GKPZ2018}
A. Grigoryan, Yu. Kondratiev, A. Piatnitski, E. Zhizhina. 
\href{http://arxiv.org/abs/1707.06709}{Point-wise estimates for nonlocal heat kernel of convolution type operators}. 	arXiv:1707.06709, 2017.


\bibitem{Hag1981}
P.~S. Hagan.
  \href{http://dx.doi.org/10.1002/sapm198164157}{The instability of
  nonmonotonic wave solutions of parabolic equations}.
  {\em Stud. Appl. Math.}, 64(1):57--88, 1981.

\bibitem{HR2014}
F.~Hamel and L.~Ryzhik.
  \href{http://dx.doi.org/10.1088/0951-7715/27/11/2735}{On the nonlocal
  {Fisher--KPP} equation: steady states, spreading speed and global bounds}.
  {\em Nonlinearity}, 27(11):2735, 2014.

\bibitem{IT1982}
H. Ishii  and I. Takagi.
\href{http://doi.org/10.1007/BF00275157}{Global stability of stationary solutions to a nonlinear diffusion equation in phytoplankton dynamics}. {\em J. Math. Biol.}, 16(1):1--24, 1982/83.

\bibitem{Kan1964}
J.~I. Kanel{\cprime}.
  Stabilization of the solutions of the equations of combustion theory
  with finite initial functions.
  {\em Mat. Sb. (N.S.)}, 65(107):398--413, 1964.

\bibitem{Ken1965}  
D. G. Kendall.
Mathematical models of the spread of infection. In: {\em Mathematics and Computer Science in
Biology and Medicine}, London, 1965, pp. 213--224.

\bibitem{KPP1937}
A.~N. Kolmogorov, I.~G. Petrovsky, and N.~S. Piskunov.
  \'{E}tude de l'\'{e}quation de la diffusion avec croissance de la
  quantit\'{e} de mati\`{e}re et son application \`{a} un probl\`{e}me
  biologique.
  {\em Bull. Univ. \'{E}tat Moscou S\'{e}r. Inter. A}, 1:1--26, 1937.

\bibitem{LMNC2003}
S. Levin, H. Muller-Landau, R. Nathan, and J. Chave. 
\href{http://doi.org/10.1146/annurev.ecolsys.34.011802.132428}{The Ecology and Evolution of Seed Dispersal: A Theoretical Perspective}.
{\em Annual Review of Ecology, Evolution, and Systematics},
34(1):575--604, 2003.

\bibitem{LPL2005}
F.~Lutscher, E.~Pachepsky, and M.~A. Lewis.
  \href{http://dx.doi.org/10.1137/S0036139904440400}{The effect of
  dispersal patterns on stream populations}.
  {\em SIAM J. Appl. Math.}, 65(4):1305--1327,
  2005.

\bibitem{Mol1972a}
D.~Mollison.
  Possible velocities for a simple epidemic.
  {\em Advances in Appl. Probability}, 4:233--257, 1972.

\bibitem{Mol1972}
D.~Mollison.
  The rate of spatial propagation of simple epidemics.
  In {\em Proceedings of the {Sixth} {Berkeley} {Symposium} on
  {Mathematical} {Statistics} and {Probability} ({Univ}. {California},
  {Berkeley}, {Calif}., 1970/1971), {Vol}. {III}: {Probability} theory}, pages
  579--614. Univ. California Press, Berkeley, Calif., 1972.

\bibitem{Mur2003}
J.~D. Murray.
  {\em Mathematical biology. {II}}, volume~18 of {\em Interdisciplinary
  Applied Mathematics}.
  Springer-Verlag, New York, third edition, 2003.
  ISBN 0-387-95228-4.
  xxvi+811 pp.
  Spatial models and biomedical applications.

\bibitem{NTY2017}
H. Ninomiya, Y. Tanaka, and H. Yamamoto.
\href{http://doi.org/10.1007/s00285-017-1113-x}{Reaction, diffusion and non-local interaction}.
{\em J. Math. Biol.}, 75(5):1203--1233, 2017.

\bibitem{Paz1983}
A.~Pazy.
  {\em Semigroups of linear operators and applications to partial
  differential equations}, volume~44 of {\em Applied Mathematical Sciences}.
  Springer-Verlag, New York, 1983.
  ISBN 0-387-90845-5.
  viii+279 pp.

\bibitem{PS2005}
B.~Perthame and P.~E. Souganidis.
\href{http://dx.doi.org/10.3934/dcds.2005.13.1235}{Front propagation
  for a jump process model arising in spatial ecology}.
{\em Discrete Contin. Dyn. Syst.}, 13\penalty0 (5):\penalty0
  1235--1246, 2005.

\bibitem{RHD2011}
M.~Raghib, N.~A. Hill, and U.~Dieckmann.
  \href{http://dx.doi.org/10.1007/s00285-010-0345-9}{A multiscale
  maximum entropy moment closure for locally regulated space-time point process
  models of population dynamics}.
  {\em J. Math. Biol.}, 62(5):605--653, 2011.

\bibitem{Sch1980}
K.~Schumacher.
  \href{http://dx.doi.org/10.1515/crll.1980.316.54}{Travelling-front
  solutions for integro-differential equations.~{I}}.
  {\em J. Reine Angew. Math.}, 316:54--70, 1980.

\bibitem{SK1997}
N.~Shigesada and K.~Kawasaki.
  {\em Biological Invasions: Theory and Practice}.
  Oxford University Press, 1997.

\bibitem{SLW2011}
Y.-J. Sun, W.-T. Li, and Z.-C. Wang.
  \href{http://dx.doi.org/10.1016/j.na.2010.09.032}{Traveling waves for
  a nonlocal anisotropic dispersal equation with monostable nonlinearity}.
  {\em Nonlinear Anal.}, 74(3):814--826, 2011.

\bibitem{Wei1982a}
H.~Weinberger.
  \href{http://dx.doi.org/10.1137/0513028}{Long-time Behavior of a
  Class of Biological Models}.
  {\em SIAM Journal on Mathematical Analysis}, 13(3):353--396, May 1982.

\bibitem{Yag2009}
H.~Yagisita.
  \href{http://dx.doi.org/10.2977/prims/1260476648}{Existence and
  nonexistence of traveling waves for a nonlocal monostable equation}.
  {\em Publ. Res. Inst. Math. Sci.}, 45(4):925--953, 2009.

\end{thebibliography}
\end{document}